\newif\ifdraft
\def\seq#1_#2{\langle #1_#2:#2\in\omega\rangle} 
\def\set#1:#2.{{\{\,#1: #2\,\}}} 
\newcommand\la[1]{{\langle{#1}\rangle}}
\newcommand\ain{\subseteq^*} 
\newcommand\cl[1]{\overline{#1}} 
\newcommand\N{{\mathbb N}}
\newcommand\Q{{\mathbb Q}}
\renewcommand\L{{\mathbb L}}
\newcommand\G{{\mathbb G}}
\newcommand\K{{\mathcal K}}
\newcommand\I{{\mathcal I}}
\newcommand\D{{\mathcal D}}
\newcommand\rank{\mathop{\rm rank}}
\newcommand\Int{\mathop{\rm Int}}
\newcommand\so{\mathop{\mathfrak{so}}}
\newcommand\nwd{\mathop{\bf nwd}}
\newcommand\csct{\mathop{\bf csc}}
\newcommand\cpt{\mathop{\bf cpt}}
\newcommand\scl{\mathop{\rm scl}} 
\newcommand\kw{\ensuremath{k_\omega}}
\newcommand\cw{\ensuremath{c_\omega}}
\newcommand\uG{{1_{\G}}}
\newcommand\zG{{0_{\G}}}
\newcommand\C{\ensuremath{\mathfrak c}}
\newcommand\restr[2]{{#1}|_{#2}}
\newcommand\vD{vD}
\newcommand\IIA{{\sf IIA}}
\newcommand\UIA{{\sf UIA}}
\newtheorem{theorem}{Theorem}
\newtheorem{lemma}[theorem]{Lemma}
\newtheorem{example}[theorem]{Example}
\newtheorem{corollary}[theorem]{Corollary}
\newtheorem{definition}[theorem]{Definition}
\newtheorem{question}{Question}
\newtheorem{prop}[theorem]{Proposition}
\newlist{couup}{enumerate}{10}
\setlist[couup]{label={\rm(\arabic*)}, ref=(\arabic*)}
\title{Invariant Ideal Axiom}
\author{Michael Hru\v{s}\'ak}
\address{Centro de Ciencias Matem\'aticas, Universidad Nacional Aut\'onoma de M\'exico, Campus Morelia, Morelia, Michoac\'an, 58089, M\'exico}
\curraddr{}
\email{michael@matmor.unam.mx}
\thanks{The research of the first author was supported  by PAPIIT
  grants IN100317 and IN104220, and CONACyT grant A1-S-16164.}
\urladdr{http://www.matmor.unam.mx/~michael}
\author{Alexander Shibakov}
\address{Department of Mathematics, Tennessee Tech. University, 110 University Drive, Cookeville, Tennessee 38505, USA}
\curraddr{}
\email{ashibakov@tntech.edu}
\thanks{}
\urladdr{http://www.math.tntech.edu/alex}
\subjclass[2010]{22A05, 03C20, 03E05, 03E35, 54H11}
\date{March 2021}
\begin{document}
\begin{abstract}
We introduce and prove the consistency of a new set theoretic axiom
we call the \emph{Invariant Ideal Axiom}.  The axiom enables us to
provide (consistently) a full topological classification of countable sequential
groups, as well as fully characterize the behavior of their finite
products. 

We also construct examples that demonstrate the optimality of the
conditions in \IIA, and list a number of open questions.

\end{abstract}
\maketitle
\section*{Introduction}
In this paper we introduce a new set theoretic principle we call the
\emph{Invariant Ideal Axiom} (\IIA\ for short) and prove its
consistency with the usual axioms of ZFC.  As the main
application of \IIA\ we show that it implies that all countable
sequential groups are either metrizable or $\kw$ and, in particular,
every countable sequential group has a definable (in fact
$F_{\sigma\delta}$) topology,  thus concluding the project initiated
in the 1970's of determining the structural theory of countable
Fr\'echet and sequential groups (see~\cite{arkhangelskii-questions,
  arkhangelskii-malykhin, comfort-problems, gerlits-nagy, 
  malykhin, malykhin-shakhmatov, nyikos, nyikos-cantor, nyikos92,
  zelenyuk} for some early papers on the subject). This line of
research is considered central in topological algebra
(see~\cite{banakh-zdomskyy, chasco-angelic, hrusak-ramos-precompact,
  shakhmatov-shibakov, shibakov-96, shibakov-96a, shibakov-99,
  todorcevic-product, todorcevic-uzcategui, todorcevic-uzcategui-k},
and several others) with some of the early questions answered only
recently (see~\cite{brendle-hrusak, hrusak-ramos-malykhin, shibakov-CH, shibakov-nointeresting}).
For a more comprehensive overview of the field, including the history,
the fundamental results, and the open problems the reader may wish
to consult several excellent surveys available on the subject, as well
as a number of books on topological
algebra:~\cite{arkhangelskii-tkachenko, dikranjan-shakhmatov, kakol-descriptive,
  moore-todorcevic, shakhmatov-survey}.

The axiom is fully accessible to the mathematicians working in topology or algebra and does
not require any knowledge of modern set theory. Aside from giving the
ultimate structural result for countable sequential groups, the axiom
has a profound impact on product properties of sequential groups. 

Our hope and expectation is that the axiom \IIA\ provides both a
canonical environment, and a test model for  future study of
convergence in topological algebra. 

To better relate our results to the existing body of research one may
recall that arguments in analysis and topology often depend on establishing the extent
of various special classes of topological spaces. Fusing algebraic and topological properties
proved to be among the most fruitful techniques. Classical examples of such results are
the implication $T_1\Rightarrow T_{3{1\over2}}$ in general topological groups
(or even $T_3\Rightarrow T_{3{1\over2}}$ in \emph{para}topological groups,
see~\cite{banakh-ravsky}) and the
\emph{Birkhoff-Kakutani theorem} (see~\cite{arkhangelskii-tkachenko}
for these and other facts about topological groups) on the metrizability of
first countable $T_1$ topological groups. Metrizability theorems 
in particular drew a lot of attention, stimulating a search for the
weakest set of conditions that jointly imply that a given topology
is generated by a metric.

In the class of topological groups, compactness and countable
tightness together imply metrizability
(see~\cite{arkhangelskii-tkachenko}) so it is natural to look
for a similar yet less restrictive set of conditions that may yield the
same result. Reasoning along these lines led V.~Malykhin to ask
about the existence of countable (equivalently, separable) Fr\'echet
non metrizable groups (see~\cite{arkhangelskii-questions}).

Malykhin's problem generated a large body of research that illustrates
another important quality shared by several results in this
area. Namely, the effect of a particular set of restrictions is
greatly influenced by set theory. As a case in point, Malykhin's
problem has an affirmative answer in a variety of set theoretic
universes, including models of MA. The conclusive result, establishing
the independence of the answer to Malykhin's problem of the axioms of
ZFC was obtained by the first author
and~U.~A.~Ramos-Garc\'{\i}a in~\cite{hrusak-ramos-malykhin}
using a forcing construction. The same paper also contains a
construction of a countable Fr\'echet non metrizable group under
a very weak set theoretic assumption $\diamondsuit(2=)$. 

Malykhin asked (see~\cite{zelenyuk-protasov-abelian}) a related
question about the class of countable sequential abelian groups
(see below for all the appropriate definitions).
This question was fully solved in~\cite{zelenyuk-protasov-abelian} by E.~Zelenyuk and
V.~Protasov who established (in ZFC) the existence of countable sequential
group topologies that are not Fr\'echet on any infinite countable
abelian group. The existence of such topology on \emph{nonabelian}
countable groups (specifically, the free group) was well-known
(see~\cite{ordman-thomas}). 

The investigation into the class of sequential groups prompted
P.~Niykos (see~\cite{nyikos}) to look at their \emph{sequential order},
which can be roughly thought of as the ordinal measure of the
complexity of the closure operator in sequential spaces. The existence
of sequential groups (of any size) of sequential order strictly
between 1 and $\omega_1$ turned out to be independent of the axioms of
ZFC, as well (see~\cite{shibakov-96a}, \cite{shibakov-nointeresting},
and~\cite{shibakov-cohen}).

A thorough review of existing ZFC constructions of sequential non Fr\'echet
groups (see~\cite{kakol-descriptive}, \cite{tkachenko87},
\cite{zelenyuk-protasov-abelian}, \cite{chasco-angelic}) reveals a
structure common to all such examples. Their topology 
is determined by a countable family of (countably)
compact subspaces (i.e.~is $\kw$, see~\cite{franklin-thomas_kw} and the
definition below). Perhaps the most widely
known family of $\kw$ spaces is the class of countable CW-complexes
(see~\cite{bredon}). Various results in algebraic topology (such as the homotopy
equivalence for filtered spaces theorem of J.~Milnor, see~\cite{boardman-vogt})
heavily depend on the $\kw$ property. The class of $k_\omega$ spaces
is well behaved, in particular, it is finitely productive, and every
countable $k_\omega$ space is sequential and analytic (see below for
the definitions and further discussion).

In~\cite{shibakov-analytic}, answering a question of S.~Todor\v
cevi\'c and C.~Uzc\'ategui, the second 
author showed that at least in the definable case (more specifically,
in the class of countable \emph{analytic} groups), the only
sequential examples of countable groups are $\kw$ or metrizable. This
naturally brought about the question (posed
in~\cite{shibakov-nointeresting}) whether it is consistent with ZFC
that all countable sequential groups are either metrizable or $\kw$
(equivalently, whether all countable sequential groups are analytic).

The main tool introduced in this paper, \emph{the Invariant Ideal
  Axiom}, or \IIA, is used to answer this question in the
affirmative. As important corollaries, we show that \IIA\ generates a
complete classification of sequential group topologies on countable
groups, as well as allows for a transparent description of products of
such groups.

To support our claim of the optimal nature of \IIA\ in the study of
convergence in countable groups, we show that its natural
generalization fails to stay consistent. We also construct  an example
demonstrating the differences between the case of countable sequential
groups and their separable counterparts. We conclude by listing a few
open questions we believe will lead to greater insight about this
field of research.

\section{Preliminaries}
All topological spaces and groups considered are $T_1$ and completely
regular. To see more about topological groups consult
\cite{arkhangelskii-questions, arkhangelskii-tkachenko,
  comfort-handbook, comfort-problems, dikranjan-shakhmatov,
  shakhmatov-survey, tkachenko-01}.

Recall that a topological space $X$ is \emph{Fr\'echet} if for any
$x\in\overline{A}\subseteq X$ there is a sequence $S\subseteq A$ such
that $S\to x$.  A space $X$ is \emph{sequential} if for every
$A\subseteq X$ which is not closed there is a $C\subseteq A$ such that
$C\to x\not\in A$.  The term `Fr\'echet space' appears to have been
coined by Arkhangel'skii in \cite{arkhangelskii-frechet}, while the
term `sequential' appears for the first time in Franklin's
\cite{franklin}, where the following notion is defined:

Given $A\subseteq X$, define the \emph{sequential closure} of $A$ as
$$[A]' = \set x\in X : C\to x\text{ for some }C\subseteq A.\text{, and then  recursively}$$
$$[A]_0=A\text{ and }[A]_\alpha = \cup\set[[A]_\beta]' : \beta<\alpha.\text{ for }\alpha\leq\omega_1.$$
Then $X$ is sequential if and only if $\overline{A}=[A]_{\omega_1}$
for every $A\subseteq X$, and the \emph{sequential order} of $X$ is
defined as
$$\so(X) = \min\set\alpha \leq \omega_1 :
  [A]_\alpha=\overline{A}\text{ for every }A\subseteq X..$$
Fr\'echet spaces are easily seen to be exactly those sequential spaces $X$
for which $\so(X)\leq1$. The following definition plays a central
role in our investigation.

\begin{definition}\label{kw.def}
A topological space $X$ is called a \emph{\kw-space\/} (\emph{\cw-space})
if there exists a countable family $\K$ of (countably) compact
subspaces of $X$ such that a $U\subseteq X$ is open in $X$ if and only
if $U\cap K$ is relatively open in $K$ for every $K\in\K$.
\end{definition}

Countable $\kw$ spaces are always sequential and the class of
\kw-spaces is productive. Such spaces are definable objects and have
$F_{\sigma\delta}$ topologies. 

What follows is a short discussion of \emph{test spaces}:
\begin{itemize}
\item {\bf Arens space} (\cite{arens}): $S_2=[\omega]^{\leq 2}$ where
  $U\subseteq S_2$  is open if and only if for every $s\in U$ such that $|s|<2$ the set
  ${\set s\cup\{ n\}\in S_2 : s\cup\{n\}\not\in U.}$ is finite,
\item {\bf sequential fan} (\cite{alexandroff-hopf}): the quotient
  $S(\omega)=S_2/[\omega]^{\leq 1}$, and finally
\item {\bf convergent sequence of discrete sets} (\cite{vandouwen}):
  $D(\omega)=\omega\times\omega\cup\{(\omega,\omega)\}\subseteq(\omega+1)^2$
  in the natural product topology.
\end{itemize}

The sequential fan $S(\omega)$ and the convergent sequence of discrete
sets $D(\omega)$ are both Fr\'echet spaces,  $D(\omega)$ is
metrizable while $S(\omega)$ has character $\mathfrak d$.
The space $S_2$ is sequential, and $\so(S_2)=2$. Both $S(\omega)$ and
$S_2$ are \kw -spaces while $D(\omega)$ is not.

\begin{prop}\label{ts.properties}\ 
\begin{enumerate}
\item {\rm (Y.~Tanaka \cite{tanaka})} A sequential space contains a
  copy of $S(\omega)$ if and only if it contains a closed copy of
  $S(\omega)$. 

\item {\rm (Y.~Tanaka \cite{tanaka})} A countable sequential
  topological group is Fr\'echet if and only if it does not contain a
  closed copy of $S(\omega)$.

\item {\rm (T.~Banakh and L.~Zdomsky\u{\i} \cite{banakh-zdomskyy})} 
  If a topological group $\G$ contains closed copies of $S(\omega)$
  and $D(\omega)$, it also contains a subset $D$ such that $D$ is not
  closed in $\G$ and is almost disjoint from every convergent sequence
  in $\G$.

\end{enumerate}
\end{prop}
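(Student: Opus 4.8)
The plan is to produce the set $D$ explicitly from the two given closed copies, using the group operation to ``multiply'' the fan against the convergent sequence of discrete sets. By homogeneity I may first translate both copies so that their distinguished (non-isolated) points coincide with the identity $\uG$. Write the closed copy of $S(\omega)$ as $F$, with spokes $(a_{n,k})_{k\in\omega}$ satisfying $a_{n,k}\to\uG$ as $k\to\infty$ for each $n$, and write the closed copy of $D(\omega)$ as $M=\{b_{i,j}:(i,j)\in\omega\times\omega\}$, where $b_{i_t,j_t}\to\uG$ exactly when $i_t\to\infty$ and $j_t\to\infty$. I will lean on three facts: (i) in $F$ every partial transversal $\{a_{n,f(n)}:n\in I\}$ with $I$ infinite is closed discrete, and, since $F$ has character $\mathfrak{d}$ at $\uG$, no single sequence converging to $\uG$ can be frequently inside infinitely many distinct spokes; (ii) $M$ is metrizable with unique non-isolated point $\uG$, so each of its rows and columns is closed discrete, and---because $M$ is closed in $\G$---remains closed discrete in $\G$; (iii) multiplication and inversion are continuous, so $s_t r_t\to\uG$ whenever $s_t,r_t\to\uG$, and if $u_t v_t\to z$ with $u_t\to\uG$ then $v_t=u_t^{-1}(u_t v_t)\to z$.

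Next I would set $D=\{a_{n,k}\,b_{n,k}:(n,k)\in\omega\times\omega\}$, arranging (by choosing $F\cap M=\{\uG\}$ and discarding the sparse coincidences $a_{n,k}b_{n,k}=\uG$, or by a short recursive selection of depths) that the listed elements are distinct and different from $\uG$. To see that $D$ is not closed I show $\uG\in\overline{D}$: given a neighborhood $U$ of $\uG$, pick a symmetric neighborhood $V$ with $V\cdot V\subseteq U$; fact (i) gives $a_{n,k}\in V$ for all $k$ past some threshold depending on $n$, and fact (ii) gives $b_{n,k}\in V$ once both indices exceed some $N$, so any $n\geq N$ together with a large enough $k$ yields $a_{n,k}b_{n,k}\in V\cdot V\subseteq U$. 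Thus every neighborhood of $\uG$ contains infinitely many points of $D$, while $\uG\notin D$.

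It then remains to show that $D$ meets every convergent sequence in a finite set, equivalently that no infinite $A\subseteq D$ converges. Suppose $A=\{a_{n,k}b_{n,k}:(n,k)\in P\}\to z$ with $P$ infinite. If $P$ met some column $n=n_0$ in an infinite set, then $a_{n_0,k}\to\uG$ and fact (iii) would give $b_{n_0,k}=a_{n_0,k}^{-1}(a_{n_0,k}b_{n_0,k})\to z$ along infinitely many distinct $b_{n_0,k}$, contradicting that the $n_0$-th column of $M$ is closed discrete in $\G$ by fact (ii). Hence $P$ meets every column finitely, so the fan coordinates of $A$ run through infinitely many spokes and form an infinite partial transversal of $F$.

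The genuinely hard case is exactly this ``spread'' one, and it is where both test spaces are needed: the partial transversal is closed discrete in $F$, but that alone does not prevent the products $a_{n,k}b_{n,k}$ from converging (in an abelian group, for instance, two closed discrete sequences can multiply to a constant). To rule it out I would use the $\mathfrak{d}$-character of the fan from fact (i) against the rigid metrizable grid of $M$: a convergence $a_{n,k}b_{n,k}\to z$ would, after translating by $z^{-1}$ and factoring, force the $M$-coordinates $b_{n,k}$ to be ``focused'' toward a single point along infinitely many spokes, which the selection in the construction is designed to forbid, since no sequence tending to $\uG$ can track infinitely many spokes simultaneously. Carrying out this diagonal argument cleanly---matching the depths chosen in each spoke to the corner structure of $D(\omega)$ so that every infinite transversal is scattered across infinitely many rows and columns of $M$---is the main obstacle; once it is in place, $P$ must be finite, so $A$ is finite. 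Together with the previous paragraph this shows $D$ is non-closed and almost disjoint from every convergent sequence, as required.
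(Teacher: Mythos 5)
You only address item (3); the proposition has three parts, and the paper in fact proves none of them (all three are quoted from Tanaka and from Banakh--Zdomsky\u{\i}), so a self-contained proof would have to handle (1) and (2) as well. But even for (3) there is a genuine gap, and it is exactly the one you flag yourself: the case of an infinite partial transversal $\{a_{n_t,k_t}b_{n_t,k_t}\}$ with distinct $n_t$ and \emph{bounded} depths $k_t$ (after passing to a subsequence, $k_t=k_0$ constant). There your diagonal pairing puts the fan coordinates in an infinite partial transversal of $S(\omega)$ (closed discrete) and the $D(\omega)$ coordinates in the single column $\{b_{n,k_0}:n\in\omega\}$ (also closed discrete). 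The factoring trick $v_t=u_t^{-1}(u_tv_t)$ needs one of the two factors to converge, and here neither does; the $\mathfrak d$-character of the fan is a statement about sequences \emph{inside} $F$ and says nothing about products of two closed discrete sequences, which, as you note yourself, can converge in general. Since your construction specifies no actual ``selection of depths,'' the appeal to ``the selection in the construction is designed to forbid'' this is circular: there is nothing to appeal to. (Your other sub-case, $n_t$ distinct and $k_t\to\infty$, is actually fine and you missed that it is easy: then both indices of $b_{n_t,k_t}$ tend to infinity, so $b_{n_t,k_t}\to\uG$, hence $a_{n_t,k_t}\to z$, contradicting closed discreteness of the transversal.)

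The missing idea is not a finer diagonal estimate but a different pairing, chosen so that \emph{every} case is killed by one of the two factoring tricks. For instance, set $D=\{a_{n,k}\cdot b_{n+k,\,n}: n,k\in\omega\}$. Density of $\uG$: for symmetric $V$ with $V\cdot V\subseteq U$, any $n\geq N_V$ and large $k$ give $a_{n,k}\in V$ and $b_{n+k,n}\in V$ (both indices are $\geq n\geq N_V$), so $D\cap U$ is infinite. Now take an infinite convergent subset. If the spoke index is constant $n_0$, then $a_{n_0,k_t}\to\uG$, so the $b_{n_0+k_t,\,n_0}$ would converge; but they are distinct points of the single column $\{b_{i,n_0}:i\in\omega\}$, which is closed discrete in $\G$ --- contradiction. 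If the spoke indices $n_t$ are distinct (the case your proof cannot close), then \emph{both} indices of $b_{n_t+k_t,\,n_t}$ are at least $n_t\to\infty$, so $b_{n_t+k_t,\,n_t}\to\uG$ regardless of the behavior of $k_t$, hence $a_{n_t,k_t}\to z$, contradicting that an infinite partial transversal of the closed fan is closed discrete in $\G$. In short: along a fixed spoke the $D(\omega)$-coordinates must stay in one closed discrete row or column, while along any transversal they must tend to $\uG$; your pairing $(n,k)\mapsto b_{n,k}$ violates the second requirement at bounded depth, and that is precisely where your argument stops.
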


Recall  that the \emph{Cantor-Bendixson} derivative $A'$  of a
topological space $A$ is defined by $A'=A\setminus \set x: x\text{ is an
isolated point of $A$}.$. The Cantor-Bendixson derivative can be
iterated---recursively define $(A)_\alpha$ for any ordinal $\alpha$ by
putting $(A)_0= A$, $(A)_{\alpha+1}=(A)_{\alpha}'$ and
$(A)^\lambda=\bigcap_{\alpha<\lambda} (A)_{\alpha}$ for $\lambda$
limit. 

The \emph{full Cantor-Bendixson derivative} (also called the
\emph{Cantor-Bendixson} (or \emph{perfect}) \emph{kernel}) of a space $A$ is $(A)_\alpha$ where
$\alpha$ is an ordinal such that $(A)_\alpha=(A)_\beta$ for any
$\beta\geq\alpha$.

A topological space $A$ is \emph{scattered} if every subset of $A$
contains an isolated (in the subset) point, equivalently
if its full Cantor-Bendixson derivative is empty.
Every scattered space is thus naturally stratified into levels,  $x\in
A$ belonging to the \emph{$\alpha$-th level} (denoted by
$\scl(x,A)=\alpha$) if and only if  $\alpha$ is the unique ordinal
such that $x\in(A)_{\alpha}\setminus(A)_{\alpha+1}$. The \emph{height} of $A$
($\scl(A)$) is the smallest ordinal $\alpha$ such that
$(A)_\alpha=\varnothing$. 

Throughout the paper, $\csct(X)$ denotes the ideal generated by closed scattered
subsets of $X$, $\nwd(X)$ is the ideal of nowhere dense
subsets of $X$,  and $\cpt(X)$ stands for the ideal generated by all
the compact subsets of $X$.

\section{The Invariant ideal Axiom}\label{IIA}

\subsection{Introducing \IIA} Analyzing the proofs of 

\begin{theorem}[\cite{hrusak-ramos-malykhin}]\label{malykhin}
It is consistent that every countable Fr\'echet group is metrizable.
\end{theorem}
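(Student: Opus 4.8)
The plan is to force, over a ground model of CH, with a countable support iteration of proper forcings of length $\omega_2$, each step designed to destroy one potential counterexample, and to arrange by a bookkeeping argument that every countable Fr\'echet group in the final extension has been handled and forced to be metrizable. Before the forcing I would reduce the topological assertion to a purely combinatorial one. By the Birkhoff–Kakutani theorem a countable $T_1$ group is metrizable precisely when it is first countable, so a counterexample is a countable Fr\'echet group $\G$ that is not first countable; by Proposition~\ref{ts.properties} such a group contains no closed copy of $S(\omega)$, yet its failure of first countability means the neighborhood filter of the identity is not countably generated (recall $S(\omega)$ has character $\mathfrak d$). Using translation invariance, the whole topology is coded by this filter, so it suffices to study filters $\mathcal{F}$ on the countable underlying set.

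The combinatorial core I would isolate is a filter $\mathcal{F}$ on $\omega$ whose associated one-point space is Fr\'echet–Urysohn but which admits no countable pseudobase, together with the extra structure reflecting compatibility of $\mathcal{F}$ with the group operation. A countable Fr\'echet non-metrizable group yields such a filter, so it is enough to force that every filter of this type is countably generated. For a fixed witness $\mathcal{F}$ I would design a proper forcing $\mathbb{P}_{\mathcal{F}}$ that generically adjoins a countable base for $\mathcal{F}$ by finite approximations, thereby making the corresponding configuration metrizable — equivalently, destroying the Fr\'echet-but-not-first-countable obstruction without introducing a closed copy of $S(\omega)$. Beyond mere properness, $\mathbb{P}_{\mathcal{F}}$ must satisfy a preservation property of the kind that survives countable support iteration (a Laver-type or P-point-preservation property), so that the single step does not itself manufacture new bad filters.

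The genuinely hard part is the iteration together with its preservation theorem. Relying on CH and properness, the length-$\omega_2$ countable support iteration preserves $\omega_1$ and has the $\aleph_2$-cc, and the bookkeeping enumerates all witnesses arising at intermediate stages. Two things must be proved in tandem. First, a preservation (or non-reintroduction) theorem: the chosen combinatorial invariant is preserved along the iteration, so that no new countable Fr\'echet non-metrizable group is created at limit stages. Second, a reflection/capturing argument: any countable Fr\'echet group in the final model $V[G]$ is coded, via a countable trace of its identity neighborhood filter, by some witness $\mathcal{F}$ that was enumerated and hence treated at an earlier stage; since being countably generated is preserved upward by the remainder of the iteration, the corresponding group is metrizable in $V[G]$.

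I expect the main obstacle to be exactly the meshing of these two ingredients: proving that the per-step forcing $\mathbb{P}_{\mathcal{F}}$ genuinely metrizes its target while the iteration preserves enough structure that every eventual counterexample is caught at an intermediate stage and never resurrected. This demands pinning down the precise filter-theoretic invariant so that it is both destroyed by $\mathbb{P}_{\mathcal{F}}$ and preserved by the surrounding iteration — a delicate balance, since a group topology is a global object whereas the forcing acts on its countable combinatorial shadow. Once the preservation theorem and the capturing argument are in place, the consistency of ``every countable Fr\'echet group is metrizable'' follows.
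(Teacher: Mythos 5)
There is a genuine gap, and it lies at the very core of your plan: the per-step forcing $\mathbb{P}_{\mathcal{F}}$ you describe cannot exist. You propose to take a counterexample group, extract the (non-countably-generated) neighborhood filter $\mathcal{F}$ of the identity, and ``generically adjoin a countable base for $\mathcal{F}$ by finite approximations.'' But no $\omega_1$-preserving (in particular no proper) forcing can do this. Indeed, if in an extension $V[G]$ there were a countable $\mathcal{B}\subseteq\mathcal{F}$ such that every $F\in\mathcal{F}$ contains a member of $\mathcal{B}$, then by properness $\mathcal{B}$ is contained in a countable ground-model family $\mathcal{B}'\subseteq\mathcal{F}$, and the statement ``every $F\in\mathcal{F}$ contains a member of $\mathcal{B}'$'' quantifies only over ground-model sets and is absolute; hence $\mathcal{F}$ would already be countably generated in $V$, contradicting the choice of the counterexample. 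So metrizability of a fixed ground-model group topology can never be \emph{created} by the iteration; your ``capturing'' step, in which witnesses treated at intermediate stages become metrizable and stay so, is vacuous.

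The actual argument (in \cite{hrusak-ramos-malykhin}, and in the present paper via the consistency of \IIA, Theorem~\ref{riia-consistency}, combined with Theorem~\ref{just.Frechet}) attacks in exactly the dual direction: rather than making a counterexample metrizable, one \emph{destroys its Fr\'echet property}. For a non-metrizable candidate one forces with the Laver--Mathias--Prikry poset $\L_{\mathcal{I}^{*}}$ associated to the dual filter of a suitable tame invariant ideal $\mathcal{I}$ (for the Fr\'echet case essentially $\nwd(\G)$); the generic real's range is a dense subset of $\G$ that is almost disjoint from every convergent sequence (a dense \emph{entangled} set), so the group is no longer Fr\'echet --- indeed no longer groomed --- in the extension. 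The iteration is a \emph{finite support} iteration of $\sigma$-centered (ccc) forcings of length $\omega_2$ over a model of CH, not a countable support proper iteration, and the two preservation ingredients are: (i) strong preservation of $\omega$-hitting families (Lemmata~\ref{P:BH} and \ref{L:Preservation}), which keeps the added entangled sets entangled through the rest of the iteration, and (ii) the algebraic sealing argument using large relations and weak closedness of the ideal (Lemmata~\ref{L:group}--\ref{L:Preservation-wrt}), which keeps them dense with respect to the final topology. Finally, $\diamondsuit$-style bookkeeping on a stationary subset of $\omega_2$ together with a club reflection argument guarantees that any counterexample in the final model has a trace at some intermediate stage that was treated there, yielding the contradiction. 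Your proposal has the reflection skeleton right, but without replacing ``force metrizability'' by ``force non-Fr\'echetness (and preserve it)'' the plan cannot be carried out.
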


\begin{theorem}[\cite{shibakov-nointeresting}]\label{no-inter}
It is consistent that every separable sequential group is either metrizable or
has sequential order $\omega_1$.
\end{theorem}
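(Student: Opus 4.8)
The plan is to prove the consistency statement by a countable-support iteration of proper forcings over a model of CH, arranging in a final model of length $\omega_2$ that no separable sequential group falls into the forbidden range: non-metrizable with $\so<\omega_1$. The first move is to reduce the statement about (possibly uncountable) separable groups to one about countable invariant combinatorial structures. Given a separable sequential group $\G$ that is neither metrizable nor of sequential order $\omega_1$, I would fix witnessing data recording the deviation from the dichotomy --- either a closed copy of the sequential fan $S(\omega)$ exhibiting non-metrizability with $\so(\G)\le1$, or a set $A$ together with the least ordinal $\alpha<\omega_1$ at which its iterated sequential closure stabilizes, with $1<\alpha$ --- and reflect everything into a countable elementary submodel $M\prec H(\theta)$, so that $\G\cap M$ is a countable group on which the same pathology is visible. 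The relevant invariant of $\G\cap M$ is a translation-invariant ideal on the countable group, naturally one of $\nwd(\G)$, $\csct(\G)$, or the ideal generated by convergent sequences, restricted to $\G\cap M$; invariance under translation comes directly from the homogeneity of the group topology, and it is this countable invariant ideal that the forcing will act upon.

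Second, I would pin down the dichotomy at the level of these ideals. By Proposition~\ref{ts.properties}, failure of metrizability (equivalently, by Birkhoff--Kakutani, of first countability) forces a closed copy of $S(\omega)$, whose character is $\mathfrak d$, so the associated ideal cannot be countably generated; meanwhile a sequential order $\alpha$ with $1<\alpha<\omega_1$ manifests as a stratification of the ideal of height exactly $\alpha$ --- a tower of sub-ideals indexed by the Cantor--Bendixson-like levels of the iterated sequential closure. The engineering goal is a single proper forcing $\mathbb{P}_{\I}$ that, given a name for such a stratified invariant ideal, generically inserts either a convergent sequence collapsing a level (driving the group toward Fr\'echet, hence metrizable) or a diagonal set realizing a genuinely new level of closure (driving $\so$ up to $\omega_1$), leaving no intermediate height stable. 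Here Proposition~\ref{ts.properties}(3) is the useful lever: the joint presence of closed copies of $S(\omega)$ and $D(\omega)$ yields a set almost disjoint from every convergent sequence, precisely the kind of object whose genericity such a forcing can exploit.

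Third, I would iterate with bookkeeping. Since every relevant invariant ideal lives on a countable set, there are at most $\mathfrak c$ of them in any intermediate model, so an $\omega_2$-length countable-support iteration with a standard enumeration catches every such ideal appearing along the way; properness preserves $\omega_1$ and governs the limit stages. The main obstacle, as always for dichotomy theorems of this shape, is \emph{preservation}: one must show that the forcing eliminating one intermediate group neither resurrects it nor manufactures a fresh separable group of intermediate order, and that ``no stratified invariant ideal of intermediate height'' survives through countable-support limits. Establishing a preservation theorem matched to the ideal --- verifying that the iterands are, for instance, $\omega^\omega$-bounding with a fusion preserving the absence of intermediate towers --- and confirming that the countable reflection genuinely controls the original uncountable group, is the crux of the argument; everything else is routine bookkeeping.
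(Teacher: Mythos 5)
Your proposal never gets past the planning stage on the two points that actually carry the proof, and one of its concrete choices is provably wrong. The preservation property you propose for the iterands --- $\omega^\omega$-bounding, verified by fusion along a countable-support iteration --- is incompatible with the theorem you are trying to force. An $\omega^\omega$-bounding iteration over a CH ground model leaves the ground-model reals dominating, so the final model satisfies $\mathfrak d=\omega_1$, hence $\mathfrak b=\omega_1$; but $\mathfrak b=\omega_1$ yields a separable non-metrizable Fr\'echet group (see the Corollary following Theorem~\ref{just.Frechet} and \cite{nyikos, nyikos-cantor, orenshtein-tsaban}), i.e.\ a separable, non-metrizable sequential group of sequential order at most $1$ --- exactly the object you must exclude. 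Any model of this theorem must satisfy $\mathfrak b>\omega_1$, so the iterands are forced to add dominating reals. That is precisely what the actual proof does (in \cite{shibakov-nointeresting}, building on \cite{hrusak-ramos-malykhin}, and reproduced in this paper as the consistency proof of \IIA, Theorem~\ref{riia-consistency}): a \emph{finite}-support iteration of $\sigma$-centered Laver--Mathias--Prikry forcings $\L_{\mathcal I^*}$, where $\mathcal I$ is a tame, weakly closed, invariant ideal on a countable group, with $\diamondsuit$-guessing on stationary subsets of $S^2_1$ as bookkeeping. The preservation notion matched to these iterands is not bounding but (strong) preservation of $\omega$-hitting families (Lemmas~\ref{P:BH} and~\ref{L:Preservation}), together with the algebraic preservation machinery for $A$-large sets and large relations (Lemmas~\ref{L:group}--\ref{L:Preservation-wrt}), which is what keeps the generic entangled dense set added at stage $\alpha$ dense and entangled in the final model. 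You correctly identify this preservation step as the crux, but you supply for it only a property that cannot hold; the forcing $\mathbb P_{\mathcal I}$ itself is never defined.

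Your reduction to countable structures also has a gap. For $M\prec H(\theta)$, the set $\G\cap M$ carries the subspace topology, and sequentiality is not inherited by dense or arbitrary subspaces, so neither the sequential order of $\G$ nor the claimed ``pathology'' is visible on $\G\cap M$ in the way you assert. The known argument does not reflect the group into an elementary submodel: it works from the outset with a countable dense subgroup, which is in general only \emph{subsequential}, and this is exactly why the paper introduces groomed spaces (every non-discrete subsequential space is groomed) and formulates \IIA\ for groomed groups and tame, weakly closed, invariant ideals such as $\nwd(\G)$, $\csct(\G)$ and $\cpt(\G)$ (Lemmas~\ref{tame} and~\ref{iaa.chain}). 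The reflection that does occur happens inside the iteration, via the club-and-diamond closing-off argument in the proof of Theorem~\ref{riia-consistency}, not at the level of the group itself.
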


\noindent we have isolated the Invariant Ideal Axiom \IIA\ which we shall
present next.

\smallskip

First let us introduce the relevant notation. Recall that an \emph{ideal} is a
family $\mathcal I\subseteq \mathcal P(\G)$ closed under taking
subsets and finite unions, and it is \emph{invariant} if both $g\cdot
I=\{g\cdot h: h\in I\}$ and $I\cdot g=\{h\cdot g: h\in I\}$, as well as
$I^{-1}=\{h^{-1}: h\in I\}$ are in $\mathcal I$ for every
$I\in\mathcal I$ and $g\in \G$. We shall assume throughout the paper that all ideals contain all finite subsets of $\G$.
Recall also that $\mathcal
I^+=\mathcal P(\G)\setminus \mathcal I$. Given a point $x$ in a
topological space (or a topological group) we denote by
$$\mathcal I_x=\{A\subseteq X: x\not\in \cl{A}\}$$ 
the dual ideal to the filter of neighborhoods of $x$.
An ideal $\I$ on a set
$X$ is \emph{$\omega$-hitting} if for every countable family $\mathcal
Y$ of infinite subsets of $X$ there is an $I\in\mathcal I$ such that
$Y\cap I$ is infinite for every $Y\in\mathcal Y$.

We call an ideal
$\mathcal I$ \emph{tame} if for every $Y\in\mathcal I^+$ every
$f:Y\to\omega$ and every $\omega$-hitting ideal $\mathcal J$ on
$\omega$ there is a $J\in\mathcal J$ such that $f^{-1}[J]\not \in
\mathcal I$, i.e. if no ideal \emph{Kat\v{e}tov-below} a restriction
of $\mathcal I$ to a positive set is $\omega$-hitting (see
e.g. \cite{hrusak-filters} for more on Kat\v{e}tov order and
$\omega$-hitting ideals). Finally, we call an ideal $\mathcal I$ of
subsets of a topological group $\G$ \emph{weakly closed} if for every
set $A\subseteq \G$ and every sequence $C\subseteq \G$ convergent to $1_\G$,
$$A\in\mathcal I\text{ if and only if } A\cup \{x: C\cdot x \subseteq^* A\}\in\mathcal I.$$
It is immediate from the definition that every ideal generated by
(sequentially) closed subsets of $\G$ is weakly closed, in particular the
ideals $\csct(\G)$, $\nwd(\G)$  and $\cpt(\G)$ are all invariant weakly
closed ideals in any topological group.

\smallskip

We call a subset $Y$ of a topological space $X$ \emph{entangled} if
$\mathcal I_x\restriction Y$ is $\omega$-hitting for every $x\in X$. We
shall call a topological space $X$ \emph{groomed} if it does not
contain a dense entangled set.

The class of groomed spaces includes all non-discrete Fr\'echet and sequential
spaces, as well as all non-discrete \emph{subsequential} spaces
(i.e.~subspaces of sequential spaces).

\begin{lemma}
Every non-discrete subsequential space is groomed.
\end{lemma}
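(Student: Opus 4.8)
The plan is to argue by contradiction. Assume a non-discrete subsequential space $X$ contains a dense entangled set $Y$, and fix a sequential space $Z\supseteq X$ witnessing subsequentiality (so that $\overline{A}^Z=[A]_{\omega_1}$ computed in $Z$, and $\overline{A}^X=\overline{A}^Z\cap X$ for $A\subseteq X$). First I would use non-discreteness to pick a non-isolated point $x_0\in X$. Setting $W=Y\setminus\{x_0\}$, density of $Y$ together with the standing $T_1$ assumption gives $x_0\in\overline{W}^X$, hence $x_0\in\overline{W}^Z$, while $x_0\notin W$. The goal is to contradict the fact that $\mathcal I_{x_0}\restriction Y$ is $\omega$-hitting.

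The heart of the argument is a transfinite induction on $\alpha$ producing, for every $z\in[W]_\alpha\setminus W$ (levels computed in $Z$), a \emph{countable} family $\mathcal F_z$ of infinite subsets of $W$ with the property that any $B\subseteq Y$ meeting every member of $\mathcal F_z$ in an infinite set must satisfy $z\in\overline{B}^Z$. One should think of $\mathcal F_z$ as a countable basis of ``fibers below $z$'' whose simultaneous infinite capture forces $z$ into the closure. For $z\in[W]_1\setminus W$ one takes a single sequence $S\subseteq W$ with $S\to z$ and sets $\mathcal F_z=\{S\}$; since an infinite subset of a convergent sequence still converges to the same limit, hitting $S$ infinitely forces $z\in\overline{B}^Z$. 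For the general step, fix a sequence $C=\{c_k\}\to z$ with $c_k\in[W]_\gamma$ for some $\gamma<\alpha$. If infinitely many $c_k$ lie in $W$, one is back in the base case using $C\cap W$; otherwise each relevant $c_k$ lies in $[W]_\gamma\setminus W$, the inductive hypothesis supplies a countable family $\mathcal F^{(k)}$ for it, and I would take $\mathcal F_z=\bigcup_k\mathcal F^{(k)}$. Capturing every member of $\mathcal F_z$ infinitely forces each $c_k\in\overline{B}^Z$, and then $c_k\to z$ forces $z\in\overline{B}^Z$.

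With the induction in hand the contradiction is immediate. Applying it to $x_0$ yields a countable family $\mathcal F_{x_0}$ of infinite subsets of $W\subseteq Y$; since $\mathcal I_{x_0}\restriction Y$ is $\omega$-hitting, there is $B\subseteq Y$ with $x_0\notin\overline{B}^X$ that meets every member of $\mathcal F_{x_0}$ infinitely. The inductive property then forces $x_0\in\overline{B}^Z$, and since $x_0\in X$ this contradicts $x_0\notin\overline{B}^X=\overline{B}^Z\cap X$. Hence no dense entangled set exists, i.e.\ $X$ is groomed.

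The main obstacle is that the naive strategy — extract a nontrivial convergent sequence from the dense set $Y$ and defeat a single instance of $\omega$-hitting — simply fails: the bottom level of the Arens space $S_2$ is dense yet sequentially discrete, so dense subsets of sequential spaces need not contain any convergent sequence. The sequential-closure tree beneath $x_0$ may have arbitrary countable height, and the real content is to compress this entire tree into one countable family of level-zero fibers while preserving the ``infinite capture forces closure'' property. The induction succeeds precisely because only a single sequence is needed at each level, so a countable union suffices to pass from $\gamma$ to $\alpha$, and the full strength of $\omega$-hitting is exactly what is required to defeat the whole family $\mathcal F_{x_0}$ simultaneously.
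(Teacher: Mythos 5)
Your proposal is correct and takes essentially the same route as the paper's proof: both run an induction on the sequential rank of the point over the dense set to produce a countable family of infinite subsets of that set whose simultaneous infinite capture forces the point into the closure, and then use $\omega$-hitting of $\mathcal I_{x_0}\restriction Y$ to obtain the contradiction. The only real difference is the invariant carried through the induction --- the paper maintains a concrete two-level structure (pairwise disjoint convergent sequences whose limit points accumulate at the chosen point, combined at each step via disjoint refinement), whereas you propagate the abstract ``infinite capture forces closure'' property by taking countable unions of families --- but this is a cosmetic variation, not a different argument.
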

\begin{proof}
Let $X$ be a dense subspace of a sequential space $Y$. Let $D\subseteq
X$ be dense and let $x\in X$ be a point which is not isolated. As $Y$
is sequential, there are countably many disjoint infinite subsets
$\{C_n:n\in\omega\}$ of $D$ such that 
\begin{enumerate}
\item Each $C_n$ converges to some point $x_n\in Y$ (not necessarily distinct), and
\item For every neighborhood $U$ of $x$  there are infinitely many
  $n\in\omega$ such that $x_n\in U$.
\end{enumerate}
This is easily proved by induction on the sequential order of $x$ in $D$:

If there is a sequence $C\subseteq D\setminus \{x\}$ convergent to $x$
let $\{C_n: n\in\omega\}$ be any collection of infinite pairwise
disjoint subsets of $C$. 

For the inductive step, assume that $x\in Y$ is the limit of a
convergent sequence $\{x_n:n\in\omega\}$ such that for each $x_n$ (by
the inductive hypothesis) exist pairwise disjoint sequences $\{C^n_m:
m\in\omega\}$ of elements of $D$ convergent each to a point $x^n_m$ such that every open
set $U$ containing $x_n$ contains infinitely many of the $x^n_m$. 
Let $\{D_m^n: n,m\in\omega\}$ be a disjoint refinement of $\{C_m^n:
n,m\in\omega\}$. Then it is a collection of pairwise disjoint
sequences convergent in $Y$ and every neighborhood $U$ of $x$ will
contain all but finitely many of the   
$\{x_n:n\in\omega\}$, and, consequently, infinitely many of the
$\{x_m^n: n,m\in\omega\}$. 

Then, however, $D$ is not entangled, as $x\in\overline Z$ for every $Z$
such that $|Z\cap C_n|=\omega$ for every $n\in\omega$. 
\end{proof}

Examples of spaces which are not groomed are discrete spaces, $\omega^*$ and
$2^\C$. 

\medskip

We are now ready to introduce the {\it Invariant Ideal Axiom}:

\bigskip
\noindent
\IIA: {\it For every countable groomed topological group $\G$ and
  every tame, weakly closed  invariant ideal $\mathcal I\subseteq 2^\G$ one of the
  following holds:
  
\smallskip
\begin{enumerate}
    \item\label{seq.capture} there is a countable $\mathcal S\subseteq
      \mathcal I$ such that for every infinite sequence $C$ convergent
      in $\G$ there is an $I\in \mathcal S$ such that $C\cap I$
      is infinite,
    
    \smallskip
    
    \item\label{almost.pi} there is a countable $\mathcal H\subseteq
      \mathcal I^+$ such that for every non-empty open $U\subseteq
      \G$ there is an $H\in \mathcal H$ such that $H\setminus
      U\in \mathcal I$.
\end{enumerate}}

\smallskip

We refer to the $\mathcal S$ from the first alternative as a \emph{sequence capturing} set, and the set 
$\mathcal H$ from the second alternative as an \emph{almost $\pi$-network}.

\medskip

To see the relevance of the axiom let us deduce the solution to the
Malykhin problem from it. We first recall the following simple lemma
from the literature (we include the short proofs for the sake of
completeness):

\begin{lemma}\label{nwd-lemma} Let $X$ be a countable Fr\'echet space without isolated points.
\begin{enumerate}
    \item {\rm (\cite{barman-dow})} If $x\in X$ and $\mathcal X$ is a
      countable collection of nowhere dense subsets of $X$ then
      there is a $C\subseteq X$ convergent to $x$ such that $X\cap N$
      is finite for every $N\in \mathcal X$.
    \item {\rm (\cite{hrusak-ramos-malykhin})} The ideal
      $\nwd(X)$ of nowhere dense subsets of $X$ is tame.
\end{enumerate}
\end{lemma}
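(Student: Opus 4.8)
The plan is to prove both parts by combining the Fréchet property with the single structural feature of nowhere dense sets that we actually need: a (closed) nowhere dense set has dense open complement, so its complement meets every neighborhood filter positively. Throughout I would first normalize: replace each $N\in\mathcal X$ by $\overline N$ and pass to the increasing tower $N_k=\overline{\bigcup_{i\le k}N_i}$, so that the $N_k$ are closed, nowhere dense, and increasing, and it suffices to find $C\to x$ with $C\cap N_k$ finite for all $k$ (equivalently $C\subseteq^* X\setminus N_k$ for every $k$).

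For the first part I would begin with the dichotomy on whether $x\in\overline{X\setminus\bigcup_k N_k}$. If yes, the Fréchet property produces a sequence $C\subseteq X\setminus\bigcup_k N_k$ with $C\to x$, and such a $C$ is even disjoint from every $N_k$, so we are done. If no, then some open $U\ni x$ satisfies $U\subseteq\bigcup_k N_k$; passing to the subspace $U$ (still countable, crowded, and Fréchet, with $N_k\cap U$ closed and nowhere dense in $U$) reduces us to the case $X=\bigcup_k N_k$. In this case, since each $N_k$ is closed and nowhere dense, $X\setminus N_k$ is dense open and $x\in\overline{X\setminus N_k}$, so Fréchet yields pairwise disjoint sequences $T_k\subseteq X\setminus N_k$ with $T_k\to x$. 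Writing $W_k=\bigcup_{j\ge k}T_j$, one has $W_k\cap N_k=\varnothing$ (as $X\setminus N_j\subseteq X\setminus N_k$ for $j\ge k$) and $x\in\overline{W_k}$, and it suffices to extract a single $C\to x$ with $C\subseteq^* W_k$ for every $k$, since then $C\cap N_k$ is finite. The main obstacle is precisely this merging step: assembling countably many sequences that \emph{all converge to the same point} $x$ into one sequence that escapes each $W_k^{\,c}$ and still converges. This is not the Arens obstruction (which requires distinct auxiliary limits), and I would resolve it by an $\omega$-fusion inside the subspace $\bigcup_k T_k\cup\{x\}$, using the Fréchet property of $x$ in this subspace to force a selection that meets each $T_j$ finitely while remaining convergent to $x$; the absence of a countable neighborhood base at $x$ is exactly what makes this the delicate point.

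For the second part I would show $\nwd(X)$ is tame directly. Fix $Y\in\nwd(X)^+$, so $Y$ is dense in some nonempty open $V$; replacing $X$ by $V$ I may assume $Y$ is dense in $X$. Fix $f\colon Y\to\omega$ and an $\omega$-hitting ideal $\mathcal J$, and let $Y_n=f^{-1}(n)$. If some fiber $Y_n$ is somewhere dense, then $\{n\}\in\mathcal J$ and $f^{-1}[\{n\}]=Y_n\notin\nwd(X)$, and we are done; so assume every $Y_n$, hence every finite union $\bigcup_{m\le n}Y_m=f^{-1}[\{0,\dots,n\}]$, is nowhere dense. Because $Y$ is dense and each $f^{-1}[\{0,\dots,m\}]$ is nowhere dense, the set $f^{-1}[(m,\infty)]=Y\setminus f^{-1}[\{0,\dots,m\}]$ is \emph{dense} for every $m$; thus for each $x\in X$ and each $m$ there is a sequence in $Y$ above level $m$ converging to $x$. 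Running the same merging technique as in the first part, now inside the dense set $Y$ and against the increasing nowhere dense tower $f^{-1}[\{0,\dots,m\}]$, I obtain for each $x\in X$ a sequence $C_x\subseteq Y$ with $C_x\to x$ on which $f$ is finite-to-one, so that $A_x:=f[C_x]$ is an infinite subset of $\omega$.

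Finally I would invoke $\omega$-hitting of $\mathcal J$ on the countable family $\{A_x:x\in X\}$ to obtain $J\in\mathcal J$ with $J\cap A_x$ infinite for every $x$. Since $f$ is finite-to-one on $C_x$, infinitely many values of $J$ being attained forces $f^{-1}[J]\cap C_x$ to be infinite, whence $x\in\overline{f^{-1}[J]}$. As this holds for \emph{every} $x\in X$, the set $f^{-1}[J]$ is dense, in particular somewhere dense, so $f^{-1}[J]\notin\nwd(X)$, which is exactly what tameness requires. The only subtle point here is the construction of the $C_x$ lying \emph{inside} $Y$ while being finite-to-one over $f$; this is where the reduction to "high fibers are dense" is essential, since it lets the merging argument of the first part be carried out within $Y$ rather than applied as a black box in $X$.
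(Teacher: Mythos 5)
Your reductions in part (1) are fine up to the point where all the actual content lives: the ``merging step.'' There you have pairwise disjoint sequences $T_k\to x$, all converging to the \emph{same} point $x$, and you need a single sequence $C\to x$ meeting each $T_k$ finitely. You propose to get this by ``an $\omega$-fusion inside the subspace $Z=\bigcup_k T_k\cup\{x\}$, using the Fr\'echet property of $x$ in this subspace.'' This step is not just unproved; it cannot be proved from what you invoke, because the Fr\'echet property of $Z$ is insufficient. The sequential fan $S(\omega)$ is exactly a space of this shape --- countably many disjoint sequences (spines) converging to a common point --- and it is Fr\'echet; yet in $S(\omega)$ no infinite set meeting every spine finitely converges to the apex (its complement, together with the apex, is an open neighborhood of the apex), so no diagonal $C$ exists there at all. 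Nothing in your setup rules out that $Z$, as a subspace of $X$, is a copy of the fan: the spine points need only be isolated \emph{in} $Z$, not in $X$, so crowdedness of $X$ does not help by itself. Since your part (2) explicitly ``runs the same merging technique'' inside $Y$, the gap propagates there too; the remaining architecture of part (2) (the fiber dichotomy, applying $\omega$-hitting to $\{f[C_x]\}$, density of $f^{-1}[J]$) matches the paper's argument and is fine modulo this hole.

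The fix is precisely the device you set aside as ``the Arens obstruction.'' In the Barman--Dow proof reproduced in the paper, the auxiliary sequences converge to \emph{distinct} points: since $X$ is crowded and Fr\'echet, pick a one-to-one sequence $x_n\to x$ with $x_n\neq x$; since $X_n=X\setminus(\{x\}\cup M_0\cup\dots\cup M_{n-1})$ is dense, pick $\langle y^n_i: i\in\omega\rangle\subseteq X_n$ converging to $x_n$. Then $x\in\overline{\{y^n_i: i,n\in\omega\}}$, so Fr\'echetness yields some $C\subseteq\{y^n_i: i,n\in\omega\}$ with $C\to x$, and the finiteness of $C\cap M_k$ is now \emph{automatic}: $C\cap M_k$ lies in the first $k+1$ columns, and $C$ meets each column finitely because that column converges to $x_n\neq x$ and the space is Hausdorff. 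The distinct auxiliary limits are not an obstruction to be avoided; they are exactly what makes the diagonal sequence exist. Keeping all auxiliary sequences aimed at $x$ itself, as you do, is the one configuration in which the extraction can genuinely fail.
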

\begin{proof}

Ad (1): Enumerate $\mathcal{X}$ as $\langle M_{n} \colon n \in \omega
\rangle$. As $X$ is Fr\'echet without isolated points, there is a
one-to-one sequence $\langle x_{n} \colon n \in \omega \rangle
\subseteq X \setminus \{x\}$ converging to $x$.  For each $n\in\omega$
the set $X_{n}=X \setminus \left(\{x\} \cup \bigcup_{i<n}M_{i}\right)$
is dense in $X$, so using the Fr\'echet property again, there is for
each $n \in \omega$ a sequence $\langle y_{i}^{n} \colon i \in \omega
\rangle \subseteq X_{n}$ converging to $x_{n}$. Then
$x\in\overline{\{y_{i}^{n} \colon i,n \in \omega\}}$, hence there is a
sequence $C\subseteq \{y_{i}^{n} \colon i,n \in \omega\}$ converging
to $x$.  Now, $C\cap M_n\subseteq \{y_{i}^{m} \colon i\in\omega,
\ m\leq n\}$ and as each sequence $\langle y_{i}^{n} \colon i \in
\omega \rangle \subseteq X_{n}$ converges to $x_{n}\neq x$, $C\cap
M_n$ is finite for every $n\in\omega$.

\smallskip 

To see (2), let $Y \in \mathsf{nwd}^{+}(X)$, let $f: Y\to\omega$, and
let an $\omega$-hitting ideal $\mathcal J$ on $\omega$ be given. Put
$Z=  \text{Int}(\overline{Y})\cap Y$. Then either 
\begin{enumerate}
\item[(a)] there is an $n\in \omega$ such that $f^{-1}(n) \in \mathsf{nwd}^+(X)$, or

\item[(b)] for every $x \in Z$ there is a sequence $C_{x} \subseteq
  Z\setminus \{x\}$ converging to $x$ such that $\restr{f}{C_{x}}$ is
  finite-to-one. 
\end{enumerate}

If there is an $n\in \omega$ such that $f^{-1}(n) \in \mathsf{nwd}^+(X)$, let $J=\{n\}\in\mathcal J$.
If, on the other hand, $f^{-1}(n) \in \mathsf{nwd}(X)$ for all $n \in
\omega$, apply (1) to $Z$ and every $x\in Z$ with $\mathcal X=
\{ f^{-1}(n) \cap Z: n\in\omega\}$ to get $\{C_x: x\in Z\}$ as in
(b). The family$\{f[C_x]: x\in Z\}$ is then a countable collection of
infinite subsets of $\omega$, hence there is a $J\in\mathcal J$ such
that $J\cap f[C_x]$ is infinite for every  $x\in Z$. Then $f^{-1}[J]$
is dense in $Z$, hence in either case $J$ is an element of $\mathcal
J$ such that $f^{-1}[J] \in \mathsf{nwd}^+(X)$. 
\end{proof}

\begin{theorem}\label{just.Frechet}
Assuming \IIA, every separable Fr\'echet group is metrizable.
\end{theorem}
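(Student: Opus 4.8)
The plan is to apply \IIA\ to a carefully chosen ideal and show that the second alternative leads to a contradiction, so the first alternative must hold, and then extract metrizability from the sequence-capturing set. Since $\G$ is a separable Fr\'echet group, it is in particular non-discrete and Fr\'echet, hence groomed by the lemma above. The natural ideal to feed into \IIA\ is $\I = \nwd(\G)$, the ideal of nowhere dense subsets: by the remarks following the definition of weakly closed it is invariant and weakly closed (being generated by closed sets), and by Lemma~\ref{nwd-lemma}(2) it is tame. Thus $\nwd(\G)$ satisfies all hypotheses of \IIA, and one of the two alternatives must hold.

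The first step after this setup is to rule out the second alternative, the existence of an almost $\pi$-network $\H\subseteq\nwd^+(\G)$. Here I would use separability crucially: fix a countable dense set and observe that an almost $\pi$-network consisting of non-nowhere-dense sets would, modulo the ideal, approximate every nonempty open set; the expectation is that combining the $H\in\H$ with their somewhere-dense interiors produces a genuine $\pi$-network of open sets, which in a separable space feeds into a weight/character computation. I would aim to show that a countable almost $\pi$-network for a nowhere-dense ideal cannot exist unless the space is already second countable, or else directly derive a contradiction with the Fr\'echet property, perhaps by building a closed copy of $S(\omega)$ and invoking Proposition~\ref{ts.properties}(2) to contradict that $\G$ is Fr\'echet. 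The key point is that alternative~\eqref{almost.pi} for $\nwd(\G)$ is too strong to coexist with a non-metrizable Fr\'echet group structure.

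Granting that alternative~\eqref{almost.pi} fails, alternative~\eqref{seq.capture} gives a countable $\SS\subseteq\nwd(\G)$ such that every convergent sequence $C$ in $\G$ meets some $I\in\SS$ in an infinite set. The final step is to convert this sequence-capturing family into a countable neighborhood base at $1_\G$, which by the Birkhoff--Kakutani theorem yields metrizability of the group. The idea is that if $\SS$ captures every convergent sequence, then the complements $\G\setminus I$ for $I\in\SS$, suitably closed up under the group operations and finite intersections, should generate a first-countable (hence metrizable) group topology; more precisely, I would use Lemma~\ref{nwd-lemma}(1) to produce, for any point $x$ and the countable family $\SS$, a sequence converging to $x$ almost disjoint from every member of $\SS$, and show that the existence of such sequences forces the character at $1_\G$ to be countable, contradicting non-metrizability unless $\G$ is already first countable.

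The main obstacle I anticipate is eliminating the almost $\pi$-network alternative. The sequence-capturing-to-metrizability implication is comparatively routine once one has Birkhoff--Kakutani and Lemma~\ref{nwd-lemma}(1) in hand, but ruling out alternative~\eqref{almost.pi} requires genuinely using that $\G$ is both a group and Fr\'echet and separable: one must argue that a countable almost $\pi$-network modulo $\nwd(\G)$ would give the group enough structure to either be metrizable outright or to embed $S(\omega)$ as a closed subspace, contradicting Tanaka's characterization in Proposition~\ref{ts.properties}(2). Getting the bookkeeping right---translating the members of $\H$ around by group elements to exploit invariance, and controlling how the ``$H\setminus U\in\nwd(\G)$'' condition interacts with convergent sequences---is where the real work lies.
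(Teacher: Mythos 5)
Your proof has the two alternatives of \IIA\ playing exactly the opposite roles from the ones they actually play, and as a consequence the plan cannot be carried out as stated. Alternative~\ref{seq.capture} can never hold for $\nwd(\G)$ in a Fr\'echet group without isolated points: Lemma~\ref{nwd-lemma}(1) produces, for \emph{any} countable family $\mathcal S$ of nowhere dense sets, a convergent sequence almost disjoint from every member of $\mathcal S$, which flatly contradicts $\mathcal S$ being sequence capturing. So alternative~\ref{seq.capture} is not the ``good'' case from which metrizability is extracted --- it is impossible, and your final step (converting a sequence-capturing family into a countable neighborhood base at $1_\G$) has nothing to operate on; indeed your own appeal to Lemma~\ref{nwd-lemma}(1) at that point does not yield a character computation, it yields an outright contradiction with the defining property of $\mathcal S$. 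Dually, alternative~\ref{almost.pi} is not a pathology to be ruled out: it is exactly what \IIA\ delivers, and it is the desired conclusion in disguise. The paper's proof runs: alternative~\ref{seq.capture} fails by Lemma~\ref{nwd-lemma}(1); hence alternative~\ref{almost.pi} holds; the sets $\mathrm{int}(\overline{H})$ for $H\in\mathcal H$ then form a countable $\pi$-base (each $H$ is somewhere dense, and every open set contains some $H$ modulo $\nwd$), and since $\pi$-weight and weight coincide in topological groups, the group is second countable, hence metrizable. Your remark that alternative~\ref{almost.pi} ``is too strong to coexist with a non-metrizable Fr\'echet group'' is correct, but that is precisely the implication ``alternative~\ref{almost.pi} implies metrizable,'' not a refutation of alternative~\ref{almost.pi}; treating it as a contradiction inverts the logic of the whole argument.

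Two further points. First, \IIA\ is stated for \emph{countable} groups, and separable does not mean countable: the paper first passes to a countable dense subgroup $\G$ of the separable group $\mathbb H$, applies \IIA\ there, and then observes that sets somewhere dense in $\G$ are somewhere dense in $\mathbb H$, so the countable $\pi$-base transfers upward; your proposal applies \IIA\ directly to the separable group, which the axiom does not license. Second, no embedding of $S(\omega)$ and no appeal to Proposition~\ref{ts.properties}(2) is needed anywhere; once the roles of the two alternatives are straightened out, the entire proof is the short chain of implications above.
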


\begin{proof}
Let $\mathbb H$ be a separable Fr\'echet group and let $\G\subseteq
\mathbb H$ be a dense countable subgroup. Apply \IIA~to
$\G$ and $\nwd(\G)$. Alternative (1) fails by
Lemma \ref{nwd-lemma} so there is a countable family $\mathcal X$ of
somewhere dense subsets of $\G$, hence also somewhere dense in
$\mathbb H$ such that every open set contains $\bmod \nwd$ an element
of $\mathcal X$. Then
$$\{\mathrm{int} (\overline X): X\in \mathcal X\},$$
where the interior and closure are taken in $\mathbb H$, form a countable
$\pi$-base, and as $\pi$-weight and weight coincide in topological
groups, the group $\mathbb H$ is second countable hence metrizable.
\end{proof}

\begin{corollary} Assuming \IIA, $\mathfrak p=\omega_1$ and $\mathfrak b>\omega_1$.
\end{corollary}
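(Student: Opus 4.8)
The plan is to obtain both equalities by contraposition from Theorem~\ref{just.Frechet}, which under \IIA\ forbids any separable non-metrizable Fr\'echet group. Since $\mathfrak p\geq\omega_1$ and $\mathfrak b\geq\omega_1$ hold in ZFC, it suffices to rule out $\mathfrak p>\omega_1$ and $\mathfrak b=\omega_1$; in each case I would produce, from the negation, a separable Fr\'echet group that is not metrizable, contradicting Theorem~\ref{just.Frechet}.

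For $\mathfrak p=\omega_1$, suppose toward a contradiction that $\mathfrak p>\omega_1$. It is classical that every set of reals of size $<\mathfrak p$ is a $\gamma$-set, so there is an uncountable $\gamma$-set $X\subseteq\R$. By the Gerlits--Nagy theorem~\cite{gerlits-nagy}, the function space $C_p(X)$ (a topological group under pointwise addition) is then Fr\'echet; moreover it is separable, because $X$ is second countable, and non-metrizable, because $X$ is uncountable. This contradicts Theorem~\ref{just.Frechet}, forcing $\mathfrak p\leq\omega_1$ and hence $\mathfrak p=\omega_1$.

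For $\mathfrak b>\omega_1$, suppose instead that $\mathfrak b=\omega_1$. Here I would invoke the known construction of a countable (so separable) non-metrizable Fr\'echet group from a $\leq^*$-unbounded $\omega_1$-sequence; such constructions are among the standard sufficient conditions for a positive answer to Malykhin's problem (see~\cite{hrusak-ramos-malykhin}). The resulting group again contradicts Theorem~\ref{just.Frechet}, so $\mathfrak b\neq\omega_1$ and therefore $\mathfrak b>\omega_1$.

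The main obstacle is not the logical skeleton, which is a routine contraposition, but securing the two external inputs with the correct cardinal thresholds: that $\mathfrak p>\omega_1$ yields an uncountable $\gamma$-set, and---more delicately---that $\mathfrak b=\omega_1$ yields a countable non-metrizable Fr\'echet group. Once these are in hand, the standard inequalities $\omega_1\leq\mathfrak p$ and $\omega_1\leq\mathfrak b$ close the argument with no further computation.
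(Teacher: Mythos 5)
Your proposal is correct and takes essentially the same route as the paper: the paper's proof is precisely this contraposition from Theorem~\ref{just.Frechet}, citing as well known that either $\mathfrak p>\omega_1$ or $\mathfrak b=\omega_1$ yields a separable non-metrizable Fr\'echet group. Your extra detail for the $\mathfrak p$ case (uncountable $\gamma$-set plus the Gerlits--Nagy theorem applied to $C_p(X)$) merely fills in one of the constructions the paper delegates to the cited literature.
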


\begin{proof}
It is well known that if either $\mathfrak p>\omega_1$ or $\mathfrak
b=\omega_1$ then there is a separable non-metrizable Fr\'echet group
see e.g.\ \cite{nyikos, nyikos-cantor, orenshtein-tsaban}.
\end{proof}

The next remark we would like to make is that the assumption that the
group is groomed cannot be dropped from the statement of \IIA, as is
shown in the proposition below.

\begin{prop}
There is a countable topological group $\G$ and a tame, weakly closed,
invariant ideal $\mathcal I\subseteq \mathcal P(\omega)$ such that
\IIA\ fails for $\G$ and $\mathcal I$.
\end{prop}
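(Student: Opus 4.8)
The plan is to exhibit a concrete group $\G$ that is \emph{not} groomed—that is, one that contains a dense entangled set—and then construct on it a tame, weakly closed, invariant ideal $\I$ for which both alternatives of \IIA\ simultaneously fail. The natural candidate for $\G$ is a countable Boolean group, say $\G = \bigoplus_\omega \Z/2\Z$ (so that inverses are trivial and invariance under $I \mapsto g \cdot I$ is easy to verify), equipped with a group topology in which the whole space is dense and entangled. The point of the groomedness hypothesis is precisely to rule out the possibility that $\I_x\restriction Y$ is $\omega$-hitting for every $x$; so to violate \IIA\ I would arrange that the neighborhood ideal $\I_x$ at the identity is itself $\omega$-hitting, which is exactly what makes alternative \eqref{seq.capture} hard to satisfy. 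A first natural attempt is to take $\I$ to be an $\omega$-hitting ideal, but this directly conflicts with tameness (an $\omega$-hitting ideal is Kat\v{e}tov-above itself on a positive set), so the ideal $\I$ must be chosen more carefully than the neighborhood structure.

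First I would fix the topology. I would take $\G$ to carry a topology whose convergent sequences are controlled by a carefully chosen (filter-dual) family so that $\G$ itself is a dense entangled subset of itself: concretely, I want that for \emph{every} point $x$ and every countable family of infinite sets there is a single set avoiding a neighborhood of $x$ that meets all of them infinitely—i.e.\ $\I_x$ is $\omega$-hitting. The cleanest way is to build the topology from a single maximal almost disjoint or MAD-type family, or to realize $\G$ as a known non-Fr\'echet, non-sequential group whose convergence is so rich that no countable family of ideal sets can capture all convergent sequences. Then I would choose $\I$ to be $\csct(\G)$, $\nwd(\G)$, or $\cpt(\G)$—one of the canonical invariant weakly closed ideals listed in the excerpt—or a tailored variant, and verify tameness directly (for $\nwd$ this is essentially Lemma~\ref{nwd-lemma}(2), so an analogous argument should work for the chosen ideal).

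Next I would verify the failure of both alternatives. For \eqref{seq.capture}: given any countable $\SS \subseteq \I$, I would use the $\omega$-hitting nature of the neighborhood ideals (equivalently, the entanglement of $\G$) to produce a convergent sequence $C$ that is almost disjoint from every member of $\SS$; here Proposition~\ref{ts.properties}\ref{vD}, giving a set almost disjoint from every convergent sequence when both $S(\omega)$ and $D(\omega)$ embed as closed copies, may be the right engine, or a direct diagonalization against the countable list $\SS$. For \eqref{almost.pi}: given any countable $\H \subseteq \I^+$, I would find a nonempty open $U$ such that no $H \in \H$ satisfies $H \setminus U \in \I$, i.e.\ every $H$ has an $\I$-positive trace outside $U$; a density/genericity argument on the topology of $\G$ should deliver such a $U$, exploiting that the sets in $\H$ are $\I$-large and hence cannot be concentrated on a single small open set.

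The main obstacle, and the step I expect to consume most of the real work, is the \emph{simultaneous} consistency of three tensions in the single object $(\G, \I)$: the ideal $\I$ must be tame (so not $\omega$-hitting on any positive set), yet the topology's neighborhood filters must be $\omega$-hitting enough to defeat every candidate sequence-capturing family, and at the same time $\I^+$ must be too spread out to form an almost $\pi$-network. Balancing tameness of $\I$ against the entanglement of $\G$ is delicate precisely because entanglement is an $\omega$-hitting condition on the $\I_x$, and tameness forbids $\omega$-hitting ideals Kat\v{e}tov-below $\I$—so I must keep $\I$ and the neighborhood ideals $\I_x$ genuinely different. Getting a clean group topology realizing all of this, and checking the invariance and weak-closedness of $\I$ against that topology, is where the construction will stand or fall.
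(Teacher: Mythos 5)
Your plan contains a fatal reversal at its core. You propose to make the neighborhood ideal $\I_x$ $\omega$-hitting at every point (``$\G$ itself is a dense entangled subset of itself''), claiming this is ``exactly what makes alternative~(\ref{seq.capture}) hard to satisfy.'' It is exactly the opposite: if $\I_x\restriction\G$ is $\omega$-hitting for every $x$, then $\G$ has \emph{no} infinite convergent sequences whatsoever, since for any infinite $C\to x$ the $\omega$-hitting property applied to the one-element family $\{C\}$ yields $I\in\I_x$ with $C\cap I$ infinite, and then $x\in\cl{C\cap I}\subseteq\cl{I}$, contradicting $I\in\I_x$. With no convergent sequences, alternative~(\ref{seq.capture}) is satisfied \emph{vacuously} by any countable $\SS\subseteq\I$, so \IIA\ cannot fail for such a group (the same is true of a discrete group, the paradigm non-groomed space). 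To defeat~(\ref{seq.capture}) you need the opposite: an abundance of convergent sequences, so rich that no countable subfamily of $\I$ meets them all infinitely. Your later appeal to Proposition~\ref{ts.properties}(3) points the wrong way as well --- it produces a set almost disjoint from all convergent sequences, whereas you need convergent sequences almost disjoint from prescribed members of $\I$; and entanglement of a dense set by itself never produces such sequences, it only constrains them.

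For comparison, the paper resolves the tension you identify in your last paragraph by tying the topology and the ideal to a single combinatorial object. One first assumes \IIA\ itself without loss of generality (its failure already provides an example), which yields $\mathfrak b>\omega_1$. Then, fixing a Hausdorff gap $\{A_\alpha,B_\alpha:\alpha<\omega_1\}$, one takes $\G=[\omega]^{<\omega}$ (a Boolean group, as you suggested), declares the subgroups $[F]^{<\omega}$ with $B_\alpha\ain F$ for some $\alpha$ to be neighborhoods of $\varnothing$, and sets $\I=\{A\subseteq\G:\forall\alpha<\omega_1\ \bigcup A\ain B_\alpha\}$. Sequences converging to $0$ are then exactly the point-finite families of finite sets lying in $\I$, so convergence is plentiful; alternative~(\ref{seq.capture}) fails because no single $I\in\I$ can capture all of them (that would interpolate the gap) and no countable family can either, since $\mathfrak b>\omega_1$ prevents $\{B_\alpha:\alpha<\omega_1\}$ from being the upper half of an $(\omega,\omega_1)$-gap (Rothberger). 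Alternative~(\ref{almost.pi}) fails because countably many $\I$-positive sets admit one common witness $\alpha$, and then none of them is contained mod $\I$ in the open subgroup $[B_\alpha]^{<\omega}$. Tameness, invariance, and weak closedness are read off directly from the definition of $\I$. None of this machinery --- the gap, the coordination of $\I$ with the neighborhoods, or the reduction supplying $\mathfrak b>\omega_1$ --- is present in your outline, and the mechanism you do propose would make the proposition false rather than prove it.
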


\begin{proof} Without loss of generality we can assume  \IIA\ as its failure
  provides an example. So, in particular, we can assume that $\mathfrak b>\omega_1$.

\smallskip

Let $\{A_\alpha, B_\alpha:\alpha<\omega_1\}\subseteq[\omega]^\omega$
be a \emph{Hausdorff gap}, i.e.\
\begin{enumerate}
    \item $A_\alpha\subseteq^* A_\beta\subseteq^* B_\beta\subseteq^*
      B_\alpha$ for $\alpha<\beta<\omega_1$, and
    \item there is no $X$ such that $A_\alpha\subseteq^* X\subseteq^*
      B_\alpha$ for every $\alpha<\omega_1$.
\end{enumerate}
Topologize the group $\G=[\omega]^{<\omega}$ by declaring the
sets (in fact, subgroups) $[F]^{<\omega}$ open neighborhoods of
$\varnothing$, where $F$ is such that there is an $\alpha<\omega_1$ with
$B_\alpha\subseteq^* F$, and let
$$\mathcal I=\{ A\subseteq \G: \ \forall \alpha<\omega_1
\ \bigcup A\subseteq^* B_\alpha\}.$$

Now, the fact that $\mathcal I$ is tame follows easily by noting that
no restriction of $\mathcal I$ to a positive set is tall, and
$\mathcal I$ is inviariant as $\bigcup a\triangle I=^*\bigcup I$ for
every $I\in\mathcal I$ and $a\in\G$. The fact that $\mathcal I$ is
weakly closed follows immediately from the fact that every set of the
form $[C]^{<\omega}$ is closed in the topology of $\G$, and every set
in $\mathcal I$ is contained in en element of $\mathcal I$ of this
form ($C\in\mathcal I$ if and unly if $[\bigcup
  C]^{<\omega}\in\mathcal I$).

To see that the alternative (1) of \IIA\ fails for $\mathcal I$
note first that $C\to 0$ if and only if $C$ is a point-finite family
of finite sets and $C\in \mathcal I$. The fact that there cannot be a
countable family of elements of $\mathcal I$ intersecting infinitely
every convergent sequence follows directly from the fact that we
started with a Hausdorff gap (hence there cannot be a single such
element of $\mathcal I$) and the fact that $\mathfrak b>\omega_1$,
hence $\{B_\alpha:\alpha<\omega_1\}$ cannot form the upper half of an
$(\omega,\omega_1)$-gap by the Theorem of Rothberger.

Alternative (2) fails as $X\in \mathcal I^+ $ iff $\bigcup X\setminus
B_\alpha$ is infinite for some $\alpha<\omega_1$, and having countably
many such $X$, there is an $\alpha $ which is a witness for all of
them, hence none of them is mod $\mathcal I$ contained in
$[B_\alpha]^{<\omega}$.
\end{proof}

We shall return to further discussion of consequences of \IIA\ 
later on, but first we shall see that the axiom is consistent.

\subsection{Consistency of \IIA}

All the tools to prove consistency of the \emph{Invariant Ideal Axiom}
have been presented in \cite{hrusak-ramos-malykhin} and
\cite{brendle-hrusak}, though the models constructed there are not
models of \IIA. We shall first recall all the relevant lemmata
from the above mentioned papers. Those we can quote directly we do not
prove. Those that require some (very) minor changes we do prove, though
in all cases the changes are mere technicalities.

Recall first that the {\em Laver-Mathias-Prikry forcing}
$\L_{\mathcal{F}}$ associated to a filter $\mathcal{F}$ on
$\omega$, is defined as the set of those trees $T \subseteq
\omega^{<\omega}$ with stem $s_T$ such that for all $s \in T$
extending $s_{T}$ the set $\mbox{succ}_{T}(s)=\{n \in \omega \colon
s^{\frown}n \in T\}$ belongs to $\mathcal{F}$.  The set
$\L_{\mathcal{F}}$ is ordered by inclusion.

\smallskip
The forcing $\L_{\mathcal{F}}$ is $\sigma$-centered, and adds
generically a dominating real $\dot{\ell}_{\mathcal{F}} \colon \omega
\to \omega$ (The function $\dot{\ell}_{\mathcal{F}}$ is the unique
branch through $\omega^{<\omega}$ that belongs to all trees in the
generic filter, and it eventually dominates all ground model
reals). Its range $\dot{A}_{gen}=\text{ran}(\dot{\ell}_{\mathcal{F}})$
\emph{separates} the filter $\mathcal{F}$ (that is, the set
$\dot{A}_{gen}$ is almost contained in all members of $\mathcal{F}$
and has infinite intersection with every $\mathcal{F}$-positive set).

Names for reals in forcings of the type $\L_{\mathcal{F}}$ can
be analyzed using \emph{ranks} as introduced by Baumgartner and Dordal
in \cite{BD85}, and further developed by Brendle
\cite{Br97,Br06}. Given a formula $\varphi$ in the forcing language
and $s \in \omega^{<\omega}$, we say that $s$ {\em favors} $\varphi$
if there is no condition $T \in \L_{\mathcal{F}}$ with stem
$s$ such that $T \Vdash ``\neg \varphi"$, or equivalently, every
condition $T \in \L_{\mathcal{F}}$ with stem $s$ has an
extension $T^{\prime}$ such that $T^{\prime} \Vdash ``\varphi"$.

\smallskip
Recall also, that a forcing notion $\mathbb{P}$ {\em strongly
  preserves $\omega$-hitting} if for every sequence $\langle
\dot{A}_{n} \colon n \in \omega \rangle$ of $\mathbb{P}$-names for
infinite subsets of $\omega$ there is a sequence $\langle B_{n} \colon
n \in \omega \rangle$ of infinite subsets of $\omega$ such that for
any $B \in [\omega]^{\omega}$, if $B \cap B_{n}$ is infinite for all
$n$ then $\Vdash_{\mathbb{P}}``B \cap \dot{A}_{n}$ is infinite for all
$n"$.

In our terminology, one of the lemmas of \cite{brendle-hrusak}
becomes:

\begin{lemma}[\cite{brendle-hrusak}]\label{P:BH}
Let $\mathcal{I}$ be an ideal on $\omega$ and let
$\mathcal{F}=\mathcal{I}^{*}$ be the dual filter. Then the following
are equivalent:
\begin{enumerate}
\item $\mathcal I$ is tame,
  
\item $\L_{\mathcal{F}}$ strongly preserves $\omega$-hitting.
  
\item $\L_{\mathcal{F}}$ preserves $\omega$-hitting.
  
\end{enumerate}
\end{lemma}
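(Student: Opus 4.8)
The plan is to close the cycle of implications $(2)\Rightarrow(3)\Rightarrow(1)\Rightarrow(2)$, where the first two arrows are bookkeeping and the whole weight of the argument sits on the implication $(1)\Rightarrow(2)$. Throughout, recall that the generic real $\dot{A}_{gen}$ \emph{separates} $\mathcal{F}=\mathcal{I}^{*}$: it is almost contained in every member of $\mathcal{F}$, hence almost disjoint from every member of $\mathcal{I}$, and it meets every $\mathcal{I}$-positive set infinitely. Here I read ``$\L_{\mathcal{F}}$ preserves $\omega$-hitting'' as: every $\omega$-hitting ideal of the ground model remains $\omega$-hitting in the extension.

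For $(2)\Rightarrow(3)$, let $\mathcal{H}$ be an $\omega$-hitting ideal in $V$ and let a countable family of infinite subsets of $\omega$ in $V[G]$ be given by names $\langle\dot{A}_{n}:n\in\omega\rangle$. Strong preservation supplies ground-model infinite sets $\langle B_{n}:n\in\omega\rangle$ with the stated forcing property. Since $\mathcal{H}$ is $\omega$-hitting in $V$, choose $H\in\mathcal{H}$ with $H\cap B_{n}$ infinite for every $n$; then by strong preservation $\Vdash_{\L_{\mathcal{F}}} H\cap\dot{A}_{n}$ is infinite for all $n$, so $\mathcal{H}$ is still $\omega$-hitting in $V[G]$. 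This is routine.

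For $(3)\Rightarrow(1)$ I argue the contrapositive. Suppose $\mathcal{I}$ is not tame, witnessed by $Y\in\mathcal{I}^{+}$, a map $f:Y\to\omega$, and an $\omega$-hitting ideal $\mathcal{J}$ with $f^{-1}[J]\in\mathcal{I}$ for every $J\in\mathcal{J}$. In $V[G]$, the set $\dot{A}_{gen}\cap Y$ is infinite because $Y$ is $\mathcal{I}$-positive, while each fibre $f^{-1}(n)\in\mathcal{I}$ (as $\{n\}\in\mathcal{J}$), so $\restriction{f}{(\dot{A}_{gen}\cap Y)}$ is finite-to-one and $B:=f[\dot{A}_{gen}\cap Y]$ is infinite. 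For every $J\in\mathcal{J}$ we have $f^{-1}[J]\in\mathcal{I}$, so $\dot{A}_{gen}\cap f^{-1}[J]$ is finite and therefore $B\cap J$ is finite. Thus $B$ is an infinite subset of $\omega$ in $V[G]$ almost disjoint from every member of the ground-model ideal $\mathcal{J}$, so $\mathcal{J}$ is no longer tall, a fortiori not $\omega$-hitting, and $\L_{\mathcal{F}}$ fails to preserve $\omega$-hitting.

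The implication $(1)\Rightarrow(2)$ is the main obstacle, and I would run it through the \emph{favors} relation and the Baumgartner--Dordal/Brendle ranks already recalled in the excerpt. Given names $\dot{A}_{n}$ for infinite sets, the plan is to define, for each $n$, a ground-model set $B_{n}$ by recursion on the rank of the statements ``$\dot{A}_{n}$ contains an element $\geq\ell$'', extracting along the tree $\omega^{<\omega}$ the values $m$ that some stem forces into $\dot{A}_{n}$; a simultaneous fusion over all $n$ then produces the required sequence. The verification reduces to two points: that each $B_{n}$ is infinite, and the capturing density statement that for every condition $T$, every $\ell$, and every ground-model $B$ with $B\cap B_{n}$ infinite, $T$ has an extension forcing some $m\in B\cap\dot{A}_{n}$ with $m\geq\ell$ (whence genericity yields $\Vdash B\cap\dot{A}_{n}$ infinite). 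The single essential use of the hypothesis occurs at a branching node $s$: the ``next forced value'' of $\dot{A}_{n}$ defines a function from the $\mathcal{I}$-positive set $\mathrm{succ}_{T}(s)\in\mathcal{F}$ into $\omega$, and tameness---read as ``no ideal Kat\v{e}tov-below a positive restriction of $\mathcal{I}$ is $\omega$-hitting''---is exactly what prevents this function from pushing the candidate values off a positive set, so the extracted $B_{n}$ genuinely traps the name rather than being thinned out along the filter. I expect the delicate part to be the simultaneous rank bookkeeping across all $n$ and the precise packaging of tameness into the extraction step, which mirror the cited arguments of Baumgartner--Dordal and Brendle.
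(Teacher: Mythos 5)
A preliminary remark: the paper does not prove this lemma at all --- it is one of the results quoted directly from \cite{brendle-hrusak} (the authors state explicitly that lemmata they can quote directly they do not reprove) --- so your attempt has to be judged on its own terms rather than against a proof in the text. On those terms, your two easy arrows are fine: (2)$\Rightarrow$(3) is the routine argument you give, and your contrapositive proof of (3)$\Rightarrow$(1) is correct and complete: if $(Y,f,\mathcal{J})$ witnesses non-tameness, then $f^{-1}(n)\in\mathcal{I}$ for each $n$ (as $\{n\}\in\mathcal{J}$) while $Y\in\mathcal{I}^{+}$, so $f$ is finite-to-one on the infinite set $\dot{A}_{gen}\cap Y$, and $B=f[\dot{A}_{gen}\cap Y]$ is an infinite set of the extension meeting every $J\in\mathcal{J}$ finitely (its points have preimages in the finite set $\dot{A}_{gen}\cap f^{-1}[J]$); hence $\mathcal{J}$ is not even tall, let alone $\omega$-hitting, in the extension.

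The genuine gap is (1)$\Rightarrow$(2), which is where essentially all the content of the lemma resides, and what you write there is a plan whose decisive steps are explicitly deferred (``I expect the delicate part to be\dots''). Concretely, three things are missing. First, the rank induction: one needs the Baumgartner--Dordal/Brendle argument showing that, because $\dot{A}_n$ is forced to be infinite, every node $s$ carries an ordinal rank for the statement ``$\dot{A}_n$ meets $[k,\omega)$'', and that below a node of positive rank there are $\mathcal{I}$-positively many successors of smaller rank; without this, the ``next favored value'' function you invoke need not be defined on an $\mathcal{I}$-positive set of successors at all, and your sketch nowhere establishes that it is. Second, the mechanism by which tameness produces the ground-model sets is misdescribed: tameness applied to the favored-value function $f$ at a node $s$ (for a name $\dot{A}_n$ and a threshold $k$) says that the pushforward ideal $\{J\subseteq\omega : f^{-1}[J]\in\mathcal{I}\}$ is not $\omega$-hitting, and what this yields is a \emph{countable family} of infinite sets such that any $B$ meeting all of them infinitely has $f^{-1}[B]\in\mathcal{I}^{+}$; the sequence $\langle B_n\rangle$ required by strong preservation is the countable union of these witness families over all triples (node, name, threshold). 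Your plan of one set $B_n$ per name $\dot{A}_n$, produced by ``fusion'', is a different and weaker shape of object, and you give no argument that hitting a single set per name can control the name below every node of the tree. Third, the density verification you state (thin $\mathrm{succ}_T(s)$ to the $\mathcal{I}$-positive set $f^{-1}[B]$, extend the stem to force an element of $B\cap\dot{A}_n$ above $k$, iterate) is never carried out, and it is exactly there that the first two points get consumed. In short: the two implications you prove are the bookkeeping, and the implication you only sketch is the theorem.
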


and the standard preservation under finite support iteration argument
gives

\begin{lemma}[\cite{brendle-hrusak}]\label{L:Preservation}
Finite support iteration of ccc forcings strongly preserving
$\omega$-hitting strongly preserves $\omega$-hitting.
\end{lemma}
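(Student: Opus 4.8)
The plan is to induct on the length $\alpha$ of the iteration $\langle \mathbb{P}_\beta, \dot{\mathbb{Q}}_\beta : \beta<\alpha\rangle$, with the statement phrased so that it applies inside any intermediate extension. The successor step is the only place the hypothesis on the iterands is used directly, and it is a clean two-step composition. Suppose $\mathbb{P}_\beta$ strongly preserves $\omega$-hitting and $\Vdash_{\mathbb{P}_\beta}$ ``$\dot{\mathbb{Q}}_\beta$ strongly preserves $\omega$-hitting'', and let $\langle \dot A_n:n\in\omega\rangle$ be $\mathbb{P}_{\beta+1}$-names for infinite subsets of $\omega$. Working in $V^{\mathbb{P}_\beta}$, strong preservation for $\dot{\mathbb{Q}}_\beta$ yields (names for) infinite sets $\langle \dot C_n:n\in\omega\rangle$ such that any $C$ meeting every $\dot C_n$ infinitely is forced by $\dot{\mathbb{Q}}_\beta$ to meet every $\dot A_n$ infinitely. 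The $\dot C_n$ are $\mathbb{P}_\beta$-names, so strong preservation for $\mathbb{P}_\beta$ yields ground-model sets $\langle B_n:n\in\omega\rangle$ reflecting the $\dot C_n$. These $B_n$ witness strong preservation for $\mathbb{P}_{\beta+1}$: if a ground-model $B$ meets every $B_n$ infinitely, then $\Vdash_{\mathbb{P}_\beta}$ ``$B$ meets every $\dot C_n$ infinitely'', hence in $V^{\mathbb{P}_\beta}$ we obtain $\Vdash_{\dot{\mathbb{Q}}_\beta}$ ``$B$ meets every $\dot A_n$ infinitely'', i.e. $\Vdash_{\mathbb{P}_{\beta+1}}$ the same.

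At a limit $\alpha$ of uncountable cofinality the argument is immediate from the ccc: each $\dot A_n$ is, up to equivalence, determined by countably many conditions of finite support, hence is a $\mathbb{P}_{\beta_n}$-name for some $\beta_n<\alpha$; as $\mathrm{cf}(\alpha)>\omega$ we have $\beta:=\sup_n\beta_n<\alpha$, and the inductive hypothesis applied at $\beta$ produces the required $B_n$. For a limit $\alpha$ of countable cofinality fix an increasing cofinal sequence $\alpha_k\nearrow\alpha$. Each block $\mathbb{P}_{\alpha_{k+1}}/\mathbb{P}_{\alpha_k}$ is, in $V^{\mathbb{P}_{\alpha_k}}$, a finite support iteration of ccc forcings strongly preserving $\omega$-hitting whose length is the order type of $[\alpha_k,\alpha_{k+1})$, an ordinal $<\alpha$; so by the inductive hypothesis each block strongly preserves $\omega$-hitting over $V^{\mathbb{P}_{\alpha_k}}$. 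Since $\mathbb{P}_\alpha$ is canonically the length-$\omega$ finite support iteration of these blocks, this reduces the whole limit problem to the case $\alpha=\omega$.

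The case $\alpha=\omega$ is the heart of the matter and the step I expect to be the main obstacle, precisely because the tail forcings $\mathbb{P}_\omega/\mathbb{P}_k$ are themselves length-$\omega$ iterations and so cannot be assumed to preserve anything; the two-step trick cannot simply be pushed through the limit. Here one must build the reflecting family $\langle B_n:n\in\omega\rangle$ directly by a diagonal/fusion over the finite stages. The guiding principle is that, by ccc and finite supports, ``$j\in\dot A_n$'' below any condition is decided by a maximal antichain living in some finite stage $\mathbb{P}_k$, so each name is approximated arbitrarily well at finite stages, where strong preservation is already available (the finite stages $\mathbb{P}_k$ being finite compositions, handled by the successor case). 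I would enumerate the requirements $(n,m)$ and recursively feed the finite-stage interpretations of the $\dot A_n$ into the strong-preservation witnesses at successive $\mathbb{P}_k$, diagonalizing so that every name $\dot A_n$ and every stage is addressed cofinally often and the elements thus placed into $B_n$ genuinely force elements of $B\cap\dot A_n$ above every bound. The verification then reduces, via a density argument, to showing that for every condition $p$, every $n,m$, and every ground-model $B$ meeting all $B_\ell$ infinitely there are $q\le p$ and $j>m$ with $q\Vdash j\in B\cap\dot A_n$, which is exactly what the construction arranges. Organising the bookkeeping so that the single ground-model family $\langle B_n:n\in\omega\rangle$ works simultaneously for all names and all conditions is the delicate point; everything else is routine.
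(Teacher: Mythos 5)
Your overall skeleton is the standard one and three of its four pieces are sound: the two-step composition argument at successors is correct, the reflection argument at limits of uncountable cofinality is correct (names for subsets of $\omega$ reduce to a bounded stage by ccc, and absoluteness transfers the conclusion from $\mathbb{P}_\beta$ to $\mathbb{P}_\alpha$), and decomposing a limit of countable cofinality into an $\omega$-sequence of quotient blocks handled by the inductive hypothesis is legitimate for finite support ccc iterations. For the record, the paper itself gives no proof of this lemma at all --- it is quoted directly from \cite{brendle-hrusak} --- so the relevant comparison is with the argument there, which has exactly this skeleton.

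The genuine gap is where you yourself place it: the case $\alpha=\omega$, which is the entire content of the lemma, is not proved, and the mechanism you sketch for it would not work. Your ``guiding principle'' is false: a maximal antichain deciding ``$j\in\dot A_n$'' consists of countably many finitely supported conditions whose supports may well be cofinal in $\omega$, so the antichain need \emph{not} live in any finite stage $\mathbb{P}_k$ --- only individual conditions do. (If it did live at a finite stage, $\dot A_n$ would reduce to a $\mathbb{P}_k$-name below each condition and the limit case would follow by reflection exactly as in the uncountable cofinality case; that this fails is the whole difficulty.) Likewise, a fusion-style recursion that ``places elements into $B_n$'' so that they ``genuinely force elements of $B\cap\dot A_n$'' cannot be completed: there is no fusion in a finite support ccc iteration, and the family $\langle B_n\rangle$ must be fixed in advance so as to work for \emph{all} $B$ and all conditions $p$ simultaneously; no bookkeeping over single conditions achieves this. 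The missing idea is to replace your undefined ``finite-stage interpretations'' (which, read as ground-model interpretations along descending sequences of conditions, do not suffice) by the $\mathbb{P}_k$-\emph{names} for the sets of possible values of $\dot A_n$ over the rest of the iteration, $\dot D^k_n=\{\,j:\exists q\in\mathbb{P}_\omega\ (q\restriction k\in\dot G_k\ \wedge\ q\Vdash_{\mathbb{P}_\omega} j\in\dot A_n)\,\}$, which are forced to be infinite, together with the amalgamation lemma: any $q$ witnessing $j\in\dot D^k_n$ has its restriction to $k$ in $\dot G_k$, hence is compatible with any $p\in\mathbb{P}_k$ lying in $\dot G_k$, so if $p\in\mathbb{P}_k$ and $p\Vdash_{\mathbb{P}_\omega}B\cap\dot A_n\subseteq m$, then $p\Vdash_{\mathbb{P}_k}B\cap\dot D^k_n\subseteq m$. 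One then applies strong preservation of each $\mathbb{P}_k$ (available by your successor case) to $\langle\dot D^k_n:n\in\omega\rangle$ and takes as the final witnessing family the union over all $k$ of the resulting ground-model families; since every condition of $\mathbb{P}_\omega$ lies in some $\mathbb{P}_k$ by finite support, the amalgamation lemma converts any failure into a contradiction, which is precisely the density statement you isolate. Note that this construction is a flat union over pairs $(n,k)$ --- no recursion or diagonalization --- and its two decisive ingredients, the quotient names $\dot D^k_n$ and the amalgamation argument, are absent from your sketch.
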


Recall also, that given an ideal $\mathcal{I}$, a forcing notion
$\mathbb{P}$, and a $\mathbb{P}$-name $\dot{A}$ for a subset of
$\omega$ we say that $\mathbb{P}$ {\em seals the ideal $\mathcal{I}$
  via $\dot{A}$ } if $\Vdash_{\mathbb{P}}``\dot{A} \in \mathcal{I}^{+}
\wedge \,\mathcal{I}\restriction \dot{A}$ is $\omega$-hitting".

Following \cite{hrusak-ramos-malykhin} we say that an ideal
$\mathcal{I}$ is {\em $\omega$-hitting mod} filter $\mathcal{F}$ if
$\mathcal{I} \cap \mathcal{F} = \varnothing$ and for every countable
family $\mathcal{H} \subset \mathcal{F}^{+}$ there is an $I \in
\mathcal{I}$ such that $H \cap I \in \mathcal{F}^{+}$ for all $H \in
\mathcal{H}$.

\begin{lemma}[\cite{hrusak-ramos-malykhin}]\label{L:Seal}
The forcing $\L_{\mathcal{F}}$ seals an ideal $\mathcal{I}$
via $\dot{A}_{gen}$ if and only if $\mathcal{I}$ is $\omega$-hitting
mod $\mathcal{F}$.
\end{lemma}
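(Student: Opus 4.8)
The plan is to prove the biconditional in two directions, and I expect the bulk of the work to lie in analysing names for subsets of $\omega$ in the Laver--Mathias--Prikry forcing $\L_{\mathcal F}$ using the rank/favoring machinery recalled just above. Recall that sealing $\mathcal I$ via $\dot A_{gen}$ means $\Vdash_{\L_{\mathcal F}}``\dot A_{gen}\in\mathcal I^+\wedge\mathcal I\restriction\dot A_{gen}$ is $\omega$-hitting'', where $\dot A_{gen}=\ran(\dot\ell_{\mathcal F})$ separates $\mathcal F$. The first observation I would record is that since $\dot A_{gen}$ is almost contained in every member of $\mathcal F=\mathcal I^*$ and meets every $\mathcal F$-positive (equivalently, every $\mathcal I$-positive, i.e.\ every infinite, since we may assume $\mathcal F$ is a filter extending the cofinite filter) set infinitely, the statement $\dot A_{gen}\in\mathcal I^+$ is automatic from $\dot A_{gen}$ being infinite and meeting $\mathcal F$-positive sets; the real content is the $\omega$-hitting assertion about $\mathcal I\restriction\dot A_{gen}$.

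\textbf{The hard direction: $\omega$-hitting mod $\mathcal F$ implies sealing.}
Assume $\mathcal I$ is $\omega$-hitting mod $\mathcal F$. I must show that in the extension, for every countable family $\{\dot Y_n:n\in\omega\}$ of $\L_{\mathcal F}$-names for infinite subsets of $\dot A_{gen}$, there is $I\in\mathcal I$ (from the \emph{ground model}, since $\mathcal I$ is a ground-model ideal) with $I\cap\dot Y_n$ infinite for all $n$. Fix a condition $T$ forcing each $\dot Y_n$ to be an infinite subset of $\dot A_{gen}$. For each node $s$ extending the stem and each $n$, I would use the favoring relation to read off, from the nodes $s$ that favor ``$k\in\dot Y_n$'', a family of subsets of $\omega$ indexed by $(s,n)$; because $\dot Y_n\subseteq\dot A_{gen}=\ran(\dot\ell_{\mathcal F})$ and successor sets $\mbox{succ}_T(s)\in\mathcal F$, these read-off sets are $\mathcal F$-positive. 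Applying the hypothesis ``$\omega$-hitting mod $\mathcal F$'' to this countable family of $\mathcal F$-positive sets yields a single $I\in\mathcal I$ meeting each of them in an $\mathcal F$-positive (in particular infinite) set. The crux is then a fusion/amalgamation argument building an extension $T'\le T$ along which one forces $I\cap\dot Y_n$ infinite for every $n$ simultaneously: at each node one thins $\mbox{succ}_{T'}(s)$ to the (still $\mathcal F$-positive) part lying in $I$ that favors membership in the relevant $\dot Y_n$, diagonalizing over $n$ so that every branch encounters infinitely many such witnesses for each $n$. This density argument over the $T$'s is where I expect the main obstacle to be, since one must keep all the $\mbox{succ}$-sets in $\mathcal F$ while simultaneously controlling infinitely many names.

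\textbf{The easy direction: sealing implies $\omega$-hitting mod $\mathcal F$.}
Conversely, suppose $\L_{\mathcal F}$ seals $\mathcal I$ via $\dot A_{gen}$, and let $\{H_n:n\in\omega\}\subseteq\mathcal F^+$ be given; I must produce $I\in\mathcal I$ with $H_n\cap I\in\mathcal F^+$ for all $n$. Here I would exploit that $\dot A_{gen}$ separates $\mathcal F$, so $\dot A_{gen}\cap H_n$ is forced to be infinite for each $n$ (an $\mathcal F$-positive set meets the separating real infinitely). Thus $\{\dot A_{gen}\cap H_n:n\in\omega\}$ is a countable family of names for infinite subsets of $\dot A_{gen}$, and sealing gives, in the extension, some $I$ with $I\in\mathcal I$ and $I\cap(\dot A_{gen}\cap H_n)$ infinite for all $n$. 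Since $\mathcal I$ is coded in the ground model and the witness $I$ is (by the $\omega$-hitting clause ranging over ground-model members of $\mathcal I$) a ground-model set, a genericity/absoluteness argument pulls $I$ back to $V$; the infinitude of $I\cap\dot A_{gen}\cap H_n$ together with $\dot A_{gen}\subseteq^* F$ for all $F\in\mathcal F$ forces $I\cap H_n$ to be $\mathcal F$-positive. One finishes by reflecting the existence of such an $I$ to the ground model via the fact that the forcing does not add the relevant witness. I would double-check the direction of the separation property and the exact quantifier placement (ground-model versus generic $I$) in \cite{hrusak-ramos-malykhin}, as that is the only place a subtlety could hide.
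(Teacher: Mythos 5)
Your proposal has the right global architecture (the two directions, favoring, reading off $\mathcal F$-positive sets, applying the hypothesis to get a single $I\in\mathcal I$), and note that the paper itself does not prove this lemma but quotes it from \cite{hrusak-ramos-malykhin}; still, measured against the argument it relies on, there are genuine gaps. The first is your opening identification $\mathcal F=\mathcal I^*$ and the claim that $\mathcal F$-positive $=$ $\mathcal I$-positive $=$ infinite. In this lemma $\mathcal I$ and $\mathcal F$ are unrelated objects: in the intended application (Corollary \ref{P:MainProposition}) $\mathcal F$ is the dual filter of the tame \emph{invariant} ideal on the group, while the ideals being sealed are the neighborhood ideals $\mathcal I_g$, and $\mathcal F^+$ is in general far smaller than the family of infinite sets. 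Because of this conflation you never invoke the clause $\mathcal I\cap\mathcal F=\varnothing$ from the definition of $\omega$-hitting mod $\mathcal F$; yet that clause is precisely what makes $\Vdash\dot A_{gen}\in\mathcal I^{+}$ true (for $I\in\mathcal I$ one gets $\omega\setminus I\in\mathcal F^{+}$, hence $\dot A_{gen}\setminus I$ infinite, by the separation property), and symmetrically your ``easy'' direction proves only the hitting clause and never verifies disjointness (which follows from sealing: $I\in\mathcal I\cap\mathcal F$ would force $\dot A_{gen}\subseteq^{*}I$, contradicting $\dot A_{gen}\in\mathcal I^{+}$). Apart from that omission, the easy direction is essentially correct.

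In the hard direction there are two more serious problems. First, the $\mathcal F$-positivity of the favoring sets $H_{s,n}$ is the technical heart of the lemma, and you dispose of it in one clause; proving it requires showing that if $H_{s,n}$ missed some $F\in\mathcal F$, the conditions with pairwise incomparable stems $s^{\frown}m$ ($m\in F$) forcing $m\notin\dot Y_n$ could be amalgamated coherently at \emph{every} level of a tree, which is exactly where the Baumgartner--Dordal/Brendle rank machinery recalled in the paper enters; nothing in your sketch engages with this. Second, the fusion you propose is not delicate but impossible: you thin $\mathrm{succ}_{T'}(s)$ to a subset of $I$, but conditions of $\L_{\mathcal F}$ must have successor sets that are \emph{members} of $\mathcal F$, and since $I\in\mathcal I$ and $\mathcal I\cap\mathcal F=\varnothing$, no subset of $I$ lies in $\mathcal F$ (filters are upward closed), so your tree is never a condition. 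Moreover, no fusion is needed: keep the successor sets in $\mathcal F$, and observe that for any $T\le T^{*}$ with stem $t$ and any $n,l$, the set $\mathrm{succ}_T(t)\cap I\cap H_{t,n}$ is infinite (a member of $\mathcal F$ meets an $\mathcal F$-positive set infinitely), so picking $m>l$ in it and using favoring yields an extension of $T$ forcing $m\in I\cap\dot Y_n$; thus the conditions forcing $I\cap\dot Y_n\not\subseteq l$ are dense below $T^{*}$, and genericity finishes the proof. This density-via-favoring mechanism, rather than fusion, is exactly how the paper runs the analogous argument in its proof of Lemma \ref{L:sealingII}, where one picks $a\in\mathrm{succ}_{T_C}(s_C)\cap R^{-1}_{s_C,n_C}[C\setminus F_C]$ and then extends the condition.
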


The following corollary is the main tool for the consistency of \IIA.

\begin{corollary}[\cite{hrusak-ramos-malykhin}]\label{P:MainProposition}
  Let $\G$ be a countable topological group and
  $\mathcal I\subseteq \mathcal P(\omega)$ be a tame invariant ideal
  such that both alternatives of \IIA\ fail. Then:
\begin{enumerate}

\item $\L_{\mathcal I^*}$ forces $\G$ is not groomed,
  and

\item $\L_{\mathcal I^{*}}$ strongly preserves
  $\omega$-hitting.

\end{enumerate}
\end{corollary}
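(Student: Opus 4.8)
The plan is to prove (2) directly and to obtain (1) by exhibiting a single, concrete dense entangled set in the extension, namely the generic real $\dot A_{gen}=\ran(\dot\ell_{\mathcal F})$ for $\mathcal F=\mathcal I^*$. Part (2) is immediate: since $\mathcal I$ is tame, the implication tame $\Rightarrow$ strongly preserving $\omega$-hitting of Lemma~\ref{P:BH} applies to $\L_{\mathcal I^*}$. For part (1), recall that $\dot A_{gen}$ separates $\mathcal F$, so in $V^{\L_{\mathcal I^*}}$ it is almost contained in every member of $\mathcal F$ and meets every ground-model $\mathcal I$-positive set infinitely. Identifying the underlying set of $\G$ with $\omega$, I claim $\dot A_{gen}$ is forced to be dense and entangled, so that $\G$ is not groomed.

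The engine is Lemma~\ref{L:Seal}. For a fixed $x\in\G$ it tells us that $\L_{\mathcal I^*}$ seals $\mathcal I_x$ via $\dot A_{gen}$---that is, forces both $x\in\cl{\dot A_{gen}}$ and ``$\mathcal I_x\restriction\dot A_{gen}$ is $\omega$-hitting''---exactly when $\mathcal I_x$ is $\omega$-hitting mod $\mathcal F$. Thus it suffices to check, for \emph{every} $x\in\G$, that $\mathcal I_x$ is $\omega$-hitting mod $\mathcal F$: density of $\dot A_{gen}$ is then the first clause ($x\in\cl{\dot A_{gen}}$ for all $x$), and entanglement is the second. Using invariance of $\mathcal I$ together with homogeneity of $\G$, I would reduce both clauses to the case $x=\uG$, translating any witness produced at $\uG$ back by $x$ and invoking that left translation is a homeomorphism carrying $\mathcal I$ to itself.

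The two clauses of ``$\omega$-hitting mod $\mathcal F$'' are precisely where the two failing alternatives of \IIA\ enter. The disjointness clause $\mathcal I_{\uG}\cap\mathcal F=\varnothing$ is equivalent to every neighbourhood of $\uG$ being $\mathcal I$-positive, which I would derive from the failure of alternative~\ref{seq.capture}: if some open $V\ni\uG$ lay in $\mathcal I$, then $\{gV:g\in\G\}$ is a countable subfamily of $\mathcal I$, and the failure of \ref{seq.capture} produces a sequence $C\to y$ almost disjoint from every $gV$; but $C$ is eventually inside the neighbourhood $yV$ of $y$, a contradiction. For the hitting clause I would show that, given any countable $\mathcal H\subseteq\mathcal I^+$, there is a neighbourhood $V$ of $\uG$ with $H\setminus V\in\mathcal I^+$ for all $H\in\mathcal H$; then $I=\G\setminus V\in\mathcal I_{\uG}$ meets every $H$ in an $\mathcal I$-positive set. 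Were this to fail, every neighbourhood $V$ of $\uG$ would admit some $H\in\mathcal H$ with $H\setminus V\in\mathcal I$, and then the countable family $\{gH:g\in\G,\ H\in\mathcal H\}\subseteq\mathcal I^+$ would be an almost $\pi$-network (for open $U$, pick $u\in U$ and apply the hypothesis to $u^{-1}U$), contradicting the failure of alternative~\ref{almost.pi}.

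The main obstacle is bookkeeping around absoluteness and translation rather than any single hard estimate. The witnesses $I$ delivered by sealing are ground-model members of $\mathcal I_x$, so I must record that $x\in\cl I$ is absolute---it is decided by the ground-model basic neighbourhoods of $x$, which meet the ground-model set $I$ or not---so these witnesses still lie in $\mathcal I_x$ as recomputed in $V^{\L_{\mathcal I^*}}$, and entanglement, a statement quantifying over \emph{all} countable families of infinite subsets of $\dot A_{gen}$ in the extension, genuinely follows. The remaining delicate point is the reduction to $\uG$: one must verify that translating a neighbourhood, an $\mathcal I$-positive set, or a convergent sequence by a group element behaves as expected, which is exactly what invariance of $\mathcal I$ and continuity of the translations provide.
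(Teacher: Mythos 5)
Your proposal is correct and takes essentially the same approach as the paper: part (2) is immediate from Lemma~\ref{P:BH}, and part (1) applies Lemma~\ref{L:Seal} to each ideal $\mathcal I_x$, using the failure of alternative~\ref{seq.capture} to get that (translates of) open sets are $\mathcal I$-positive (hence the disjointness clause and density of $\dot A_{gen}$) and the failure of alternative~\ref{almost.pi} to get the hitting clause. The only difference is one of exposition: you make explicit the translation arguments via invariance and the absoluteness of closures of ground-model sets, which the paper compresses into the remark that one ``only needs to translate from one language to another.''
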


\begin{proof} To see (1), one essentially only needs to translate from
  one language to another; to force that $\G$ is not groomed,
  i.e. contains a dense entangled set, means (in the language of
  sealing) adding a dense subset $\dot A$ of $\G$ such that
  $\L_{\mathcal I^*}$ seals the ideal $\mathcal{I}_{g}$for every $g\in
  \G$. Now, by Lemma \ref{L:Seal}, it is enough to show that the ideal
  $\mathcal{I}_{g}$ is $\omega$-hitting mod $\mathcal I^{*}$, and hence,
  $\L_{\mathcal I^*}$ seals the ideal $\mathcal{I}_{g}$ via
  $\dot{A}_{gen}$ for every $g\in \G$, which is simply the negation of
  the existence of a countable almost $\pi$-network of $\I$-positive sets
  (alternative (2) of \IIA). The failure of capturing of convergent
  sequences (alternative (1) of \IIA) guarantees that every element of
  $\mathcal I^*$ is dense in $\G$, hence also $\L_{\mathcal I^*}$
  forces $\dot{A}_{gen}$ to be dense in $\G$.

\smallskip

(2) follows directly from Lemma \ref{P:BH}.
\end{proof}

The last part of the forcing argument which deals with the
preservation of dense entangled sets, really uses algebra.

Let $(\G,\cdot)$ be an abstract group and let $A \subseteq\G \setminus
\{\uG\}$.  A subset $Y$ of $\G$
is called {\em $A$-large} if for every $a \in A$ and $b\in\G$, either
$b \in Y$ or $a\cdot b^{-1} \in Y$. By
$A$-$\mathsf{large}$ we will denote the collection of all subsets of
$\G$ which are $A$-large. A family $\mathcal{C}$ of subsets of
$\G$ is {\em $\omega$-hitting w.r.t. $A$} if given $\langle
Y_{n} \colon n \in \omega\rangle \subset A$-$\mathsf{large}$ there is
a $C \in \mathcal{C}$ such that $C \cap Y_{n}$ is infinite for all
$n$. Finally, We say that a relation $R \subseteq \G \times
\G$ is {\em large} if for every $a, b \in \G$, either
$\langle a,b \rangle \in R$ or $\langle a,a\cdot b^{-1}\rangle \in R$.

\begin{lemma}[\cite{hrusak-ramos-malykhin}]\label{L:group} 
Let $\G$ be a countable group and let $\mathcal I$ be a weakly closed invariant
ideal on $\G$ for which (1) of the \IIA\ fails, and let
$\langle R_{n} \colon n \in \omega \rangle$ be a sequence of large
relations. Then there is a sequence $C$ convergent to $1_\G$
such that $R_{n}^{-1}[C \setminus F] \in \mathcal I^{+}$ for every $n
\in \omega$ and $F \in [\G]^{<\omega}$.
\end{lemma}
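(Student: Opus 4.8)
The plan is to translate the conclusion into a statement about $\mathcal I$-positivity of certain sections, exploit the defining identity of largeness, and then build the sequence $C$ by feeding the \emph{failure} of alternative (1) into a recursion whose positivity is protected by weak closedness. For a relation $R$ and $x\in\G$ write $R^{x}=\{b:\langle x,b\rangle\in R\}$ and $\tilde R^{x}=\{b:\langle x,xb^{-1}\rangle\in R\}$. The largeness of $R$ is exactly the clean identity $R^{x}\cup\tilde R^{x}=\G$ for every $x$. Since $\mathcal I$ is downward closed, a superset of a positive set is positive, and $\bigcap_{F\in[\G]^{<\omega}}R_n^{-1}[C\setminus F]=\{x:R_n^{x}\cap C\text{ is infinite}\}=:H_n(C)$; hence it suffices to produce one sequence $C\to 1_\G$ with $H_n(C)\in\mathcal I^{+}$ for every $n$. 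The largeness identity supplies the governing dichotomy: if $x\notin H_n(C)$ then cofinitely many $c\in C$ lie in $\tilde R_n^{x}$, i.e.\ $xc^{-1}\in R_n^{x}$, so a tail of the sequence $xC^{-1}$ (which converges to $x$ because inversion and multiplication are continuous) is contained in $R_n^{x}$. Thus $H_n(C)\in\mathcal I$ would force, for co-$\mathcal I$-many $x$, the section $R_n^{x}$ to contain a translate–tail of $C$.

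First I would reduce to sequences converging to $1_\G$. Given any countable $\mathcal S\subseteq\mathcal I$, the family $\{g\cdot I:g\in\G,\ I\in\mathcal S\}$ is again a countable subfamily of $\mathcal I$ by invariance, so the failure of (1) yields a convergent sequence $C\to g_0$ almost disjoint from every member of it; then $g_0^{-1}C\to 1_\G$ and, since $g_0^{-1}C\cap I=g_0^{-1}(C\cap g_0 I)$, this translated sequence is almost disjoint from every $I\in\mathcal S$. Next I would build the desired $C=\{c_k:k\in\omega\}$ by a dovetailing recursion that, for each requirement $n$, maintains an $\mathcal I$-positive ``witness'' set $B_n$ of points $x$ for which $R_n^{x}\cap\{c_0,\dots,c_{k}\}$ is being driven to be infinite, while keeping all the $c_k$ inside a shrinking neighbourhood basis at $1_\G$ to guarantee convergence. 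The selection step is to choose the next $c\in U$ so that, for each currently active $n$, the set $B_n\cap\{x:c\in R_n^{x}\}=B_n\cap R_n^{-1}[\{c\}]$ remains positive; this is where the failure of (1) is used, preventing the convergent directions from being captured by the countably many $\mathcal I$-sets recording the choices made so far, and where the largeness dichotomy keeps the ``bad'' complement $\{x:xc^{-1}\in R_n^{x}\}$ from swallowing $B_n$ modulo $\mathcal I$.

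The main obstacle I expect is twofold, and it is exactly the ``really uses algebra'' point. First, there is a uniformization difficulty: the sections $R_n^{x}$ vary with $x$, whereas both weak closedness and invariance are phrased for a single fixed subset of $\G$ and a single fixed translating sequence, so the per-$x$ largeness dichotomy must be converted into one statement about a fixed $\mathcal I$-set before weak closedness can be applied once and matched against the almost-disjointness delivered by the failure of (1). Second, the witness sets $B_n$ shrink along the recursion, and ensuring that the limiting witness remains $\mathcal I$-positive—so that $H_n(C)\supseteq B_n^{\infty}\in\mathcal I^{+}$—is precisely what weak closedness is for: adjoining to an $\mathcal I$-set all points approached within it along a translate of a convergent sequence does not change its $\mathcal I$-membership, which is what controls the limit stages and, together with invariance and largeness, forces the preimages $R_n^{-1}[C\setminus F]$ to be positive for all $n$ and all finite $F$ simultaneously.
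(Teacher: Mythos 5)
Your preparatory steps are correct as far as they go---the reduction to sequences converging to $1_\G$ by closing the family under translations (a point the paper glosses over), the identity $R^x\cup\tilde R^x=\G$, and the observation that it suffices to make $H_n(C)=\{x: R_n^x\cap C\text{ is infinite}\}$ positive---but the proof stops exactly where it needs to start. You name the two real difficulties (uniformizing over the varying sections, and keeping the witness sets positive through the recursion) and then assert, without argument, that the failure of (1) handles the first and weak closedness handles the second. Neither assertion holds up. The selection step of your recursion---find $c$ in a prescribed neighborhood with $B_n\cap R_n^{-1}[\{c\}]$ still $\mathcal I$-positive for every active $n$---does not follow from the failure of (1): that hypothesis only produces, for each countable subfamily of $\mathcal I$, an entire convergent sequence (with a limit not of your choosing) almost disjoint from its members; it says nothing about single points with positivity properties. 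Worse, the limit-stage claim is a misreading of weak closedness: that axiom says a fixed set $A$ and $A\cup\{x: C\cdot x\subseteq^* A\}$ have the same $\mathcal I$-membership for a fixed convergent $C$; it is not a countable-completeness property, and a decreasing sequence of $\mathcal I$-positive witness sets can perfectly well have intersection in $\mathcal I$, or empty. So the set your recursion would hand to $H_n(C)$ at the end has no reason to be positive. (You have also committed to a strictly stronger goal than the lemma: the paper only makes each $R_n^{-1}[C\setminus F]$ positive, per $F$, never the intersection $H_n(C)$.)

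The missing idea, which lets the paper avoid any recursion, is to \emph{recenter the small sections at the identity}. Put $B_n=\{b\in\G: R_n^{-1}(b)\in\mathcal I\}$ and apply the failure of (1) \emph{once} to the countable family $\mathcal S=\{R_n^{-1}(b)\cdot b^{-1}: b\in B_n,\ n\in\omega\}\cup\{B_n: B_n\in\mathcal I\}$, which lies in $\mathcal I$ by invariance; this yields a single $C\to 1_\G$ almost disjoint from every member of $\mathcal S$. If $B_n\in\mathcal I$, then $C$ escapes $B_n$, so some $b\in C\setminus F$ has $R_n^{-1}(b)\in\mathcal I^+$ and $R_n^{-1}(b)\subseteq R_n^{-1}[C\setminus F]$. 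If $B_n\in\mathcal I^+$, then for each $b\in B_n$ the almost-disjointness of $C$ from $R_n^{-1}(b)\cdot b^{-1}$ plus largeness gives $\langle a\cdot b,a\rangle\in R_n$ for all but finitely many $a\in C$, i.e. $C\cdot b\subseteq^* R_n^{-1}[C\setminus F]$: a statement about the \emph{fixed} set $R_n^{-1}[C\setminus F]$ and the \emph{fixed} sequence $C$, uniformly in $b\in B_n$. Hence $B_n\subseteq R_n^{-1}[C\setminus F]\cup\{b: C\cdot b\subseteq^* R_n^{-1}[C\setminus F]\}$, and one application of weak closedness to $A=R_n^{-1}[C\setminus F]$ forces $A\in\mathcal I^+$. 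This translation trick is precisely the uniformization you flagged as an obstacle but did not supply, and without it your outline does not yield a proof.
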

\begin{proof}
For every $n\in \omega$ let $B_{n}=\{b \in \G \colon
R_{n}^{-1}(b) \in \mathcal I\}$ and put
$$\mathcal{S}=\{R_{n}^{-1}(b) \cdot b^{-1} \colon b \in B_{n}, n \in
\omega\} \cup \{B_{n} \colon B_{n} \in \mathcal I\}.$$

As (1) of \IIA\ fails, there is a sequence $C$ converging to
$1_\G$ such that $C \cap S$ is finite for every $S \in
\mathcal{S}$.  We claim that $R_{n}^{-1}[C \setminus F] \in \mathcal
I^+$ for every $n \in \omega$ and $F \in [\G]^{<\omega}$. To
see this, let $n \in \omega$ and $F \in [\G]^{<\omega}$ be
given. Consider two cases.

\smallskip

{\bf Case 1.} $B_{n} \in \mathcal I$.

\smallskip

Then there is an $b \in C \setminus F$ such that $R_{n}^{-1}(b) \in
\mathcal I^{+}$.

\smallskip

{\bf Case 2.} $B_{n} \in \mathcal I^{+}$.

\smallskip

Fix $b \in B_{n}$. Then $a \notin R_{n}^{-1}(b)\cdot b^{-1}$ (or,
equivalently, $a\cdot b \notin R_{n}^{-1}(b)$) for all but finitely
many $a \in C$. Since $R_{n}$ is a large relation, $\langle a\cdot b,
a \rangle \in R_{n}$ for all but finitely many $a \in C$. In
particular, $\{a \cdot b \colon a \in C\} \subseteq^{*} R_{n}^{-1}[C
  \setminus F]$ and $\{a \cdot b \colon a \in C\}$ converges to
$b$. Thus, 
$$B_{n} \subseteq R_{n}^{-1}[C \setminus F]\cup\{b\in\G:  C\cdot b\subseteq^* R_{n}^{-1}[C \setminus F]\}
,$$
hence, as $\mathcal I$ is weakly closed, also $R_{n}^{-1}[C \setminus F] \in \mathcal I^{+}$.
\end{proof}

\begin{lemma}[\cite{hrusak-ramos-malykhin}]\label{L:sealingII} 
Let $\G$ be a countable topological group and $\mathcal I$ an
invariant ideal on $\G$ for which (1) of the \IIA\ 
fails. Then
$$\Vdash_{\L_{\mathcal I^{*}}}``\mathcal{C}\text{ is
}\omega\text{-hitting w.r.t. }  \dot{A}_{gen}\text{'',}$$ where
$\mathcal{C}=\mathcal{I}_{1_{\G}}^{\perp}$ is the ideal
consisting of sequences converging to $1_{\G}$.
\end{lemma}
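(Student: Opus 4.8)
The plan is to turn the name for the sequence of large sets into a \emph{countable} family of ground-model large relations, feed these into Lemma~\ref{L:group} to extract a single sequence $C$ converging to $1_\G$, and then run a genericity argument showing $C$ meets each large set infinitely often. As a preliminary reduction, it suffices to fix an arbitrary condition $T\in\L_{\mathcal I^*}$ and names $\langle\dot Y_n:n\in\omega\rangle$ with $T\Vdash$``each $\dot Y_n$ is $\dot A_{gen}$-large'', and to produce a single $C\to 1_\G$ with $T\Vdash|C\cap\dot Y_n|=\omega$ for all $n$. I would take $C$ in the ground model: whether a ground-model sequence converges to $1_\G$ is decided by the ground-model neighbourhood filter of $1_\G$ and is therefore absolute, so $C\in\mathcal C$ is preserved into the extension, and the statement to be forced is exactly the $\omega$-hitting property of $\mathcal C$ with respect to $\dot A_{gen}$.

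First I would build the relations. Enumerate the countably many finite stems $u$ occurring in $T$, and for each such $u$ and each $n$ define $R_{n,u}\subseteq\G\times\G$ by declaring $\langle a,b\rangle\in R_{n,u}$ iff either $a\notin\mathrm{succ}_T(u)$, or $a\in\mathrm{succ}_T(u)$ and some condition $T''\le T$ with stem $u^{\frown}a$ forces $b\in\dot Y_n$. Each $R_{n,u}$ is large: for $a\in\mathrm{succ}_T(u)$ and $b\in\G$, any condition with stem $u^{\frown}a$ below $T$ forces $a\in\dot A_{gen}$, hence by $\dot A_{gen}$-largeness forces $b\in\dot Y_n\lor a b^{-1}\in\dot Y_n$; extending it to decide $b\in\dot Y_n$ either witnesses $\langle a,b\rangle\in R_{n,u}$ directly, or else forces $a b^{-1}\in\dot Y_n$ and so witnesses $\langle a,a b^{-1}\rangle\in R_{n,u}$, while for $a\notin\mathrm{succ}_T(u)$ largeness holds by fiat.

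Next I would apply Lemma~\ref{L:group} to the countable sequence $\langle R_{n,u}\rangle_{n,u}$, obtaining one $C\to 1_\G$ with $R_{n,u}^{-1}[C\setminus F]\in\mathcal I^+$ for all $n$, all stems $u$, and all $F\in[\G]^{<\omega}$. To conclude I would show that forcing a new point of $C$ into $\dot Y_n$ is dense below $T$: given $T'\le T$ with stem $u$, a finite $F$, and $n$, note that $\mathrm{succ}_{T'}(u)\in\mathcal F=\mathcal I^{*}$ and that $R_{n,u}^{-1}[C\setminus F]\cap\mathrm{succ}_T(u)\in\mathcal I^+$ (removing the padded part $\G\setminus\mathrm{succ}_T(u)\in\mathcal I$ preserves positivity). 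Hence the two meet; choosing $a$ in the intersection and $b\in C\setminus F$ with $\langle a,b\rangle\in R_{n,u}$ witnessed by some $T''\le T$ of stem $u^{\frown}a$, the common refinement of $T''$ with $T'$ above $u^{\frown}a$ is again a condition, since successor sets are intersected inside the filter $\mathcal F$; it extends $T'$ and forces $b\in\dot Y_n$ with $b\in C\setminus F$. Density then yields $T\Vdash|C\cap\dot Y_n|=\omega$ for every $n$.

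The hard part will be the interaction between favoring and the group-largeness: whether a stem forces $b\in\dot Y_n$ genuinely depends on the \emph{whole} stem rather than on its last letter, so a relation built from last-letter data need not be large, while an ``$\exists$-condition'' relation large enough for Lemma~\ref{L:group} does not by itself force membership. The two devices that overcome this are (i) exploiting that $\dot A_{gen}$-largeness fires at \emph{every} generic entry, which splits the forced disjunction $b\in\dot Y_n\lor a b^{-1}\in\dot Y_n$ into the two relation-membership alternatives and so restores largeness, and (ii) indexing the relations by all stems $u$ at once and applying Lemma~\ref{L:group} to the whole countable family, so that a single ground-model $C$ serves every node of the density argument. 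The one remaining point needing care is the amalgamation of the witnessing condition $T''$ with the current condition $T'$ at the shared stem $u^{\frown}a$, i.e.\ the stem-compatibility that makes the density step legitimate.
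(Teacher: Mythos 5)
Your overall architecture is the same as the paper's: relations indexed by pairs (node of $T$, index of the name), padded so as to be trivially large off $\mathrm{succ}_T(u)$, fed into Lemma~\ref{L:group} to extract a single $C\to 1_{\G}$, followed by a genericity argument. (The paper phrases this as a proof by contradiction, working only at the single stem $s_C=s_{T_C}$ of a condition witnessing failure, rather than as a density argument over all nodes; that difference is cosmetic.) The genuine gap is in your definition of $R_{n,u}$, and it is exactly the issue you flag in your last paragraph without resolving: with your existential, exact-stem definition (``some $T''\le T$ with stem $u^{\frown}a$ forces $b\in\dot Y_n$''), the final amalgamation is fine --- two conditions of $\L_{\mathcal I^{*}}$ with the same stem are compatible, since intersecting them keeps all successor sets in the filter --- but the largeness proof breaks down: when you ``extend to decide $b\in\dot Y_n$'', the deciding condition will in general have a \emph{longer} stem, because $\L_{\mathcal F}$ for a general filter $\mathcal F$ has no pure decision; a condition with a longer stem witnesses membership in neither $R_{n,u}$-alternative, so largeness is not established. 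If instead you relax the definition to allow witnesses whose stem merely extends $u^{\frown}a$, largeness is restored but the amalgamation fails: the witness $T''$ now has stem $v\supsetneq u^{\frown}a$, and $v$ need not belong to $T'$ at all, so the ``common refinement'' of $T''$ and $T'$ above $u^{\frown}a$ can degenerate to a finite chain and need not be a condition. Neither horn of this dilemma can be argued away; your proof is incomplete on exactly this point.

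The missing idea is the paper's notion of \emph{favoring}: declare $\langle a,b\rangle\in R_{n,u}$ iff $a\in\mathrm{succ}_T(u)$ implies that \emph{no} condition with stem $u^{\frown}a$ forces $b\notin\dot Y_n$. This universal (rather than existential) formulation threads the needle. For largeness: if $\langle a,b\rangle\notin R_{n,u}$, the failure itself hands you a condition $S$ with stem \emph{exactly} $u^{\frown}a$ forcing $b\notin\dot Y_n$; intersecting with the nodes of $T$ comparable with $u^{\frown}a$ you may assume $S\le T$, so $S$ forces $a\in\dot A_{gen}$ and hence, by the forced largeness of $\dot Y_n$, also $a\cdot b^{-1}\in\dot Y_n$; by same-stem compatibility this one condition already shows that no condition with stem $u^{\frown}a$ forces $a\cdot b^{-1}\notin\dot Y_n$, i.e.\ $\langle a,a\cdot b^{-1}\rangle\in R_{n,u}$. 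For the density step: favoring says that \emph{every} condition with stem $u^{\frown}a$ has an extension forcing $b\in\dot Y_n$, so you apply it directly to the restriction of $T'$ above $u^{\frown}a$ and obtain the required $T'''\le T'$ --- no amalgamation with a foreign condition is ever needed. With this one change your argument goes through and becomes essentially the proof in the paper.
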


\begin{proof}
Aiming for a contradiction, assume that there are a sequence
$\langle \dot{B}_{n} \colon n \in \omega\rangle$ of
$\L_{\mathcal I^{*}}$-names and a condition $T^{*}\in
\L_{\mathcal I^{*}}$ such that $T^{*} \Vdash``\forall n \in
\omega \, (\dot{B}_{n} \in \dot{A}_{gen}$-$\mathsf{large}$)'', and for
every $C \in \mathcal{C}$ there are a condition $T_{C} \in
\L_{\mathcal I^{*}}$ with $T_{C} \leqslant T^{*}$, a natural
number $n_{C}$, and a finite subset $F_{C}$ of $\G$ such that
\[
T_{C} \Vdash``C \cap \dot{B}_{n_{C}} \subseteq F_{C}". \qquad (\star)
\]
For each $s \in T^{*}$ with $s \supseteq s_{T^{*}}$ and each natural
number $n$, put
$$R_{s,n}=\{\langle a,b \rangle \colon a \in \text{succ}_{T^{*}}(s)
\Rightarrow s^{\frown}a\text{ favors } b \in \dot{B}_{n}\}.$$

\medskip

{\bf Claim.} The relation $R_{s,n}$ is large.

\smallskip

Let $a$ and $b$ be two elements of $\G$. Assume that $\langle
a,b\rangle \notin R_{s,n}$. Assuming $a \in
\text{succ}_{T^{*}}(s)$ we have to show that $\langle a,a \cdot
b^{-1}\rangle \in R_{s,n}$. There is a condition $T^{\prime}\leq T^*$ with
$s_{T^{\prime}}=s^{\frown}a$ such that $T^{\prime} \Vdash ``b \notin
\dot{B}_{n}"$.    Then $T^{\prime} \Vdash``a \in \dot{A}_{gen}$ and
$b \notin \dot{B}_{n}$'', but also $T^{\prime}
\Vdash``\dot{B}_{n} \in \dot{A}_{gen}$-$\mathsf{large}$'' so
$T^{\prime} \Vdash ``a \cdot
b^{-1} \in \dot{B}_{n}"$. This finishes the proof of the claim.

\medskip

By Lemma \ref{L:group}, there is a $C \in \mathcal{C}$ such that
$R_{s,n}^{-1}[C \setminus F] \in \mathcal I^{+}$ for every $s \in
T^{*}$ with $s \supseteq s_{T^{*}}$, $n \in \omega$ and $F \in
[\G]^{<\omega}$. In particular, $R_{s_{C},n_{C}}^{-1}[C
  \setminus F_{C}] \in \mathcal I^{+}$, where $s_{C}=s_{T_{C}}$.  Pick
an $a \in \text{succ}_{T_{C}}(s_{C}) \cap R_{s_{C},n_{C}}^{-1}[C
  \setminus F_{C}]$. Then, there is a $b \in C \setminus F_{C}$ such
that $s_{C}^{\frown}a$ favors $b \in \dot{B}_{n_{C}}$, and hence there
is a condition $T\leqslant T_{C}$ whose stem extends $s_{C}^{\frown}a$
such that $T \Vdash ``b \in \dot{B}_{n_{C}}"$, a contradiction to the
initial assumption $(\star)$.
\end{proof}

We say that a forcing notion $\mathbb{P}$ {\em strongly preserves
  $\omega$-hitting w.r.t. $A$} if for every $\mathbb{P}$-name
$\dot{Y}$ for an $A$-large subset of a group $\G$ there is a
sequence $\langle Y_{n} \colon n \in \omega \rangle \subset
A$-$\mathsf{large}$ such that for any $C \subseteq \G$, if $C
\cap Y_{n}$ is infinite for all $n$ then $\Vdash_{\mathbb{P}} ``C \cap
\dot{Y}$ is infinite''. Clearly, every forcing notion that strongly
preserves $\omega$-hitting w.r.t. $A$ preserves $\omega$-hitting
w.r.t. $A$.

\begin{lemma}[\cite{hrusak-ramos-malykhin}]\label{L:Strongly-Preservation-wrt}
Let $\mathbb{P}$ be a $\sigma$-centered forcing notion. Then
$\mathbb{P}$ strongly preserves $\omega$-hitting w.r.t. $A$.
\end{lemma}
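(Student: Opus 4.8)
The plan is to decompose $\mathbb{P}$ into countably many centered pieces and to read off, from the single name $\dot Y$, one ground-model $A$-large set per piece. So fix a cover $\mathbb{P}=\bigcup_{n\in\omega}P_n$ into centered sets witnessing that $\mathbb{P}$ is $\sigma$-centered, and let $\dot Y$ be a $\mathbb{P}$-name forced to be $A$-large. For each $n$ I would set
$$Y_n=\{b\in\G:\ p\not\Vdash b\notin\dot Y\ \text{ for every }p\in P_n\},$$
that is, $b\in Y_n$ exactly when no condition of $P_n$ rejects $b$ from $\dot Y$ (equivalently, every $p\in P_n$ has an extension forcing $b\in\dot Y$). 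The two things to verify are that each $Y_n$ is $A$-large and that the sequence $\langle Y_n:n\in\omega\rangle$ captures $\dot Y$ in the required sense.

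The key step, and the one where centeredness is essential, is checking that each $Y_n$ is $A$-large. Given $a\in A$ and $b\in\G$, suppose $b\notin Y_n$, so some $p_0\in P_n$ forces $b\notin\dot Y$; I must produce, for an arbitrary $p\in P_n$, an extension forcing $a\cdot b^{-1}\in\dot Y$. Here I would invoke centeredness of $P_n$ to obtain a common lower bound $q$ of $p_0$ and $p$. Then $q$ still forces $b\notin\dot Y$, while $q$ also forces the $A$-largeness disjunction ``$b\in\dot Y$ or $a\cdot b^{-1}\in\dot Y$''; hence $q\Vdash a\cdot b^{-1}\in\dot Y$. Since $q\le p$, this shows $p\not\Vdash a\cdot b^{-1}\notin\dot Y$, and as $p\in P_n$ was arbitrary we conclude $a\cdot b^{-1}\in Y_n$. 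Thus for every $a\in A$ and $b\in\G$ either $b\in Y_n$ or $a\cdot b^{-1}\in Y_n$, which is exactly $A$-largeness of $Y_n$.

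For the preservation clause, let $C\subseteq\G$ meet every $Y_n$ in an infinite set, and suppose towards a contradiction that $C\cap\dot Y$ is not forced to be infinite. Then, after extension, some condition forces $C\cap\dot Y\subseteq F$ for a fixed finite $F$; call this condition $p$ and fix $n$ with $p\in P_n$. Since $C\cap Y_n$ is infinite I may choose $b\in(C\cap Y_n)\setminus F$. By the definition of $Y_n$ and $p\in P_n$, the condition $p$ does not reject $b$, so there is $q\le p$ with $q\Vdash b\in\dot Y$; as $b\in C$ this gives $q\Vdash b\in C\cap\dot Y$, contradicting $q\le p\Vdash C\cap\dot Y\subseteq F$ together with $b\notin F$. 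Hence $\Vdash_{\mathbb{P}}``C\cap\dot Y\text{ is infinite}''$.

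I expect the only genuine subtlety to lie in pinning down the correct definition of $Y_n$: defining it through ``no condition of $P_n$ rejects $b$'' rather than ``some condition of $P_n$ accepts $b$'' is precisely what lets centeredness deliver $A$-largeness, while the capturing property then drops out of the definition almost for free. Everything else—the decomposition into centered pieces and the contradiction in the preservation step—is routine, and in particular no case analysis on the structure of $\G$ is needed beyond the single use of the forced $A$-largeness disjunction.
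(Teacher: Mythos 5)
Your proof is correct. The paper itself contains no proof of this lemma---it is one of the statements quoted directly from \cite{hrusak-ramos-malykhin}---but your argument (decomposing $\mathbb{P}$ into centered pieces $P_n$, defining $Y_n$ as the set of points that no condition of $P_n$ rejects from $\dot Y$, and combining centeredness with the forced $A$-largeness disjunction to get largeness of each $Y_n$) is exactly the standard argument behind the cited result, so there is nothing to add.
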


\begin{lemma}[\cite{hrusak-ramos-malykhin}]\label{L:Preservation-wrt}
Finite support iteration of ccc forcings strongly preserving
$\omega$-hitting w.r.t. $A$ strongly preserves $\omega$-hitting
w.r.t. $A$.
\end{lemma}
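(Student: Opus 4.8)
The plan is to induct on the length $\delta$ of the finite-support iteration $\mathbb{P}_\delta=\langle\mathbb{P}_\alpha,\dot{\mathbb{Q}}_\alpha:\alpha<\delta\rangle$, following the same scheme that yields Lemma \ref{L:Preservation}, now with ``$A$-large set'' and ``$C$ meets $\,\cdot\,$ infinitely'' in place of ``infinite subset of $\omega$'' and ``infinite intersection''. Since $\G$ is countable, a $\mathbb{P}_\delta$-name $\dot Y$ for a subset of $\G$ is coded by a real and, in the ccc setting, admits a nice name assigning to each $g\in\G$ a countable maximal antichain deciding $g\in\dot Y$. I would split into three cases: the successor step, the limit of uncountable cofinality, and the limit of cofinality $\omega$ --- only the last is substantial.

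At a successor it suffices to establish the two-step statement that if $\mathbb{P}$ and, in $V^{\mathbb P}$, $\dot{\mathbb Q}$ both strongly preserve $\omega$-hitting w.r.t. $A$, then so does $\mathbb{P}*\dot{\mathbb Q}$. Given a name $\dot Y$ for an $A$-large set, I would first work in $V^{\mathbb P}$ and apply strong preservation of $\dot{\mathbb Q}$ to obtain a $\mathbb{P}$-name $\langle\dot Z_k:k\in\omega\rangle$ for $A$-large sets such that any $C$ meeting every $\dot Z_k$ infinitely satisfies $\Vdash_{\dot{\mathbb Q}}$ ``$C\cap\dot Y$ is infinite''; then apply strong preservation of $\mathbb{P}$ (the inductive hypothesis) to each $\dot Z_k$, producing ground-model $A$-large sets $\langle Z_{k,l}:l\in\omega\rangle$ with $C$ meeting every $Z_{k,l}$ infinitely $\Rightarrow\Vdash_{\mathbb P}$ ``$C\cap\dot Z_k$ is infinite''. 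The reindexed family $\{Z_{k,l}:k,l\in\omega\}$ then works: if $C$ meets each $Z_{k,l}$ infinitely then $\Vdash_{\mathbb P}$ ``$C\cap\dot Z_k$ is infinite'' for every $k$, hence (forcing commutes with countable conjunction) $\Vdash_{\mathbb P}$ ``$C$ meets every $\dot Z_k$ infinitely'', and the choice of the $\dot Z_k$ forces $C\cap\dot Y$ infinite in $V^{\mathbb{P}*\dot{\mathbb Q}}$.

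For a limit with $\mathrm{cf}(\delta)>\omega$ the nice name uses only countably many finite-support conditions, so $\dot Y$ is equivalent to a $\mathbb{P}_\beta$-name for some $\beta<\delta$; the inductive family at $\beta$ still works over $\mathbb{P}_\delta$ since both the $A$-largeness of $\dot Y$ and the truth of ``$C\cap\dot Y$ infinite'' are already decided in $V^{\mathbb{P}_\beta}$. The case $\mathrm{cf}(\delta)=\omega$ is where the real work lies, as the name no longer reflects below $\delta$. Fixing $\delta_n\nearrow\delta$, for each $n$ I would pass to the $\mathbb{P}_{\delta_n}$-name $\dot Y_n$ for the \emph{upper approximation} of $\dot Y$, namely $g\in\dot Y_n$ iff some condition of the tail $\mathbb{P}_{[\delta_n,\delta)}$ forces $g\in\dot Y$ (equivalently, some stem favors $g\in\dot Y$). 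A quick check shows $\Vdash_{\mathbb{P}_{\delta_n}}$ ``$\dot Y_n$ is $A$-large'': for $a\in A$ and $b\in\G$, were both $b\notin\dot Y_n$ and $a\cdot b^{-1}\notin\dot Y_n$, the tail would force both out of $\dot Y$, contradicting its forced $A$-largeness. Applying the inductive hypothesis to each $\dot Y_n$ yields ground-model $A$-large sets $\langle Y_{n,m}:m\in\omega\rangle$ witnessing strong preservation for $\dot Y_n$ over $\mathbb{P}_{\delta_n}$, and $\{Y_{n,m}:n,m\in\omega\}$ is my candidate family.

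The crux is verifying this family. Assume $C$ meets every $Y_{n,m}$ infinitely and fix a condition $p$ and a threshold $N$. Because supports are finite, $\mathrm{supp}(p)\subseteq\delta_n$ for some $n$, so $p\in\mathbb{P}_{\delta_n}$ and constrains the tail $\mathbb{P}_{[\delta_n,\delta)}$ not at all. For that $n$ the hypothesis gives $\Vdash_{\mathbb{P}_{\delta_n}}$ ``$C\cap\dot Y_n$ infinite'', so below $p$ I can find $p'\le p$ in $\mathbb{P}_{\delta_n}$ and a fixed $g\in C$ of index past $N$ with $p'\Vdash g\in\dot Y_n$; by definition of the upper approximation $p'$ forces the existence of a tail condition $\dot r$ forcing $g\in\dot Y$, and since $p$ placed no restriction on the tail, $q=p'{}^\frown\dot r\le p$ is a genuine condition of $\mathbb{P}_\delta$ with $q\Vdash g\in\dot Y$. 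Hence for every $N$ the conditions forcing that $C\cap\dot Y$ contains an element of index $>N$ are dense, giving $\Vdash_{\mathbb{P}_\delta}$ ``$C\cap\dot Y$ is infinite''. I expect this last step to be the main obstacle, and the point that makes it succeed is exactly that finite supports absorb every condition into some $\mathbb{P}_{\delta_n}$ with empty tail, so the merely \emph{potential} membership recorded by $\dot Y_n$ can be realized in the tail with no compatibility constraint --- unlike the successor step, where the single tail factor is genuinely present and must itself strongly preserve $\omega$-hitting w.r.t. $A$.
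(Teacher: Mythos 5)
The paper itself quotes this lemma from \cite{hrusak-ramos-malykhin} without proof, so there is no internal argument to compare against; your proof is correct and is essentially the standard induction used in that source: composition at successor steps, reflection of nice names (via ccc and countability of $\G$) at limits of uncountable cofinality, and, at limits of countable cofinality, the upper-approximation names $\dot Y_n$ together with the observation that a finite-support condition is absorbed into some $\mathbb{P}_{\delta_n}$ and leaves the tail unconstrained. In particular you correctly identify and verify the two points on which the argument turns — that the upper approximation is forced to be $A$-large, and that potential tail membership can be realized below any condition with support in $\delta_n$ — so there is no gap.
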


Now we are in position to state and prove the main theorem of this
section:

\begin{theorem}\label{riia-consistency} The Invariant Ideal Axiom \IIA\ together
  with the Martin's Axiom  $\mathsf{MA}(\sigma$-centered strongly
  $\omega$-hitting preserving$)$ is consistent with {\sf ZFC}.
\end{theorem}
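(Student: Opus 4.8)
The plan is to build the model by a finite support iteration $\langle \mathbb{P}_\alpha, \dot{\mathbb{Q}}_\alpha : \alpha<\omega_2\rangle$ of length $\omega_2$ over a ground model of GCH, in which every iterand is a $\sigma$-centered ccc forcing that strongly preserves $\omega$-hitting. Two kinds of iterands will be interleaved by a bookkeeping function. To secure the Martin's Axiom fragment, I enumerate (with cofinal repetition) all $\sigma$-centered strongly $\omega$-hitting preserving posets of size $<\omega_2$ together with the families of $<\omega_2$ dense sets that must be met, and at the designated stages force with the corresponding poset. To secure \IIA, I enumerate the pairs $(\G,\I)$ for which $\G$ is a countable topological group and $\I$ is a tame, weakly closed, invariant ideal with both alternatives of \IIA\ failing, and at the designated stages force with $\L_{\I^*}$. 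The crucial structural point is that these belong to a \emph{single} class: by Corollary~\ref{P:MainProposition}(2) together with the $\sigma$-centredness of Laver--Mathias--Prikry forcing, each $\L_{\I^*}$ with $\I$ tame is itself $\sigma$-centered and strongly preserves $\omega$-hitting. Hence by Lemma~\ref{L:Preservation} the whole iteration $\mathbb{P}_{\omega_2}$ strongly preserves $\omega$-hitting, and a standard $\Delta$-system argument gives that it is ccc with $\C=\omega_2$ in the extension $V[G]$.

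The Martin's Axiom fragment then follows by the usual argument: a $\sigma$-centered strongly $\omega$-hitting preserving poset $\mathbb{Q}$ of size $<\omega_2$ in $V[G]$, being coded by $<\omega_2$ reals, appears as $\dot{\mathbb{Q}}^{G_\alpha}$ at some stage $\alpha<\omega_2$ (using finite supports and $\mathrm{cf}(\omega_2)>\omega$), the bookkeeping arranges that its $<\omega_2$ prescribed dense sets are met, and ccc-ness lets the tail of the iteration be absorbed.

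The heart of the matter is verifying \IIA\ in $V[G]$. Suppose toward a contradiction that $(\G,\I)$ is a counterexample there: $\G$ is groomed, $\I$ is tame, weakly closed and invariant, and both alternatives fail. The plan is to reflect this configuration to an intermediate stage $\alpha<\omega_2$ at which the bookkeeping forced with $\L_{\I^*}$. Applying Corollary~\ref{P:MainProposition}(1) at that stage---whose proof rests on Lemma~\ref{L:group} and Lemma~\ref{L:sealingII}---adds a dense subset $A_{gen}=\mathrm{ran}(\dot\ell_{\I^*})$ of $\G$ that is entangled, i.e.\ $\I_x\restriction A_{gen}$ is $\omega$-hitting for every $x\in\G$; equivalently, by Lemma~\ref{L:sealingII}, the ideal $\mathcal C$ of sequences converging to $\uG$ is $\omega$-hitting with respect to $A_{gen}$. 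Since every subsequent iterand is $\sigma$-centered, Lemma~\ref{L:Strongly-Preservation-wrt} and Lemma~\ref{L:Preservation-wrt} ensure that the tail of the iteration strongly preserves $\omega$-hitting with respect to $A_{gen}$, so $A_{gen}$ remains a dense entangled set all the way up to $V[G]$. But then $\G$ is not groomed in $V[G]$, contradicting the choice of $(\G,\I)$; hence no counterexample exists and \IIA\ holds.

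I expect the \textbf{main obstacle} to be precisely this reflection step. The group $\G$, being countable and carrying a topology with a countable neighbourhood base, is coded by a single real and so appears at some stage; but the ideal $\I$ is a subfamily of $\mathcal P(\G)$ that may have size $\C=\omega_2$, and therefore cannot be caught by a name at an intermediate stage. I would address this with an elementary submodel argument: fix a large regular $\theta$ and $M\prec H(\theta)^{V[G]}$ with $|M|=\omega_1$, $\omega_1\subseteq M$, containing $\G$, $\I$, $\mathbb{P}_{\omega_2}$ and $G$, and set $\alpha=\sup(M\cap\omega_2)<\omega_2$. The technical core, which I expect to require the most care, is to show that the trace $\I\cap V[G_\alpha]$ is, in $V[G_\alpha]$, again a tame (here invoking the preservation characterisation of tameness in Lemma~\ref{P:BH} together with the global preservation of $\omega$-hitting), weakly closed and invariant ideal for which both alternatives of \IIA\ still fail. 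The genuinely delicate direction is the downward persistence of the failure of the alternatives: adding reals tends to make capturing of convergent sequences (alternative \eqref{seq.capture}) \emph{harder}, so passing to the smaller model $V[G_\alpha]$ risks restoring a countable capturing family. I would control this using the elementarity of $M$ to reflect any purported capturing family or almost $\pi$-network, together with the strong $\omega$-hitting preservation of the iteration, which constrains exactly which new convergent sequences the tail can introduce. Granting these reflection lemmas, the forcing-preservation machinery quoted above completes the argument.
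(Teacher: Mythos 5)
Your overall architecture is the paper's: a length-$\omega_2$ finite support iteration of $\sigma$-centered, strongly $\omega$-hitting preserving posets, Lemmas \ref{P:BH} and \ref{L:Preservation} to preserve tameness-related $\omega$-hitting properties globally, Corollary \ref{P:MainProposition} to add a dense entangled set $A_{gen}$ at a reflected stage, and Lemmas \ref{L:sealingII}, \ref{L:Strongly-Preservation-wrt} and \ref{L:Preservation-wrt} to keep $A_{gen}$ dense and entangled through the tail. The genuine gap is exactly at the step you flag as the main obstacle, and elementary submodels cannot close it. Reflection (finding a stage $\alpha$ of cofinality $\omega_1$ at which the traces $\tau_\alpha=\tau\cap{\bf V}[G_\alpha]$ and $\I_\alpha=\I\cap{\bf V}[G_\alpha]$ still form a counterexample in ${\bf V}[G_\alpha]$, and at which every $\tau_\alpha$-convergent sequence of ${\bf V}[G_\alpha]$ remains $\tau$-convergent --- this last closure property, which your sketch glosses over, is indispensable for the density argument) is the easy half; it is the paper's ``standard closing off argument,'' and your submodel $M$ can substitute for it. The hard half is that the iteration must have forced with $\L_{\I_\alpha^{*}}$ \emph{at that very stage} $\alpha$. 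No bookkeeping arranged in advance can guarantee this: the target $\alpha\mapsto\I\cap{\bf V}[G_\alpha]$ is a moving target determined by the final-model ideal $\I$, whose name has size $\omega_2$, and handling the trace at some later stage $\beta>\alpha$ is useless, since the failure of alternatives \ref{seq.capture} and \ref{almost.pi} for $\I_\alpha$ is specific to the pair $({\bf V}[G_\alpha],\tau_\alpha)$: it neither transfers to the larger trace $\I_\beta$ nor survives into ${\bf V}[G_\beta]$, where new countable families and a finer trace topology exist. This is precisely the asymmetry with the ${\sf MA}$ half of the theorem, where an object of size $<\omega_2$ can be met at \emph{any} later stage, so bookkeeping suffices; for \IIA\ it does not.

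What the paper uses instead of bookkeeping is a guessing principle: split $S^2_1$ (the cofinality-$\omega_1$ points of $\omega_2$) into stationary $S_0$, $S_1$ and fix $\diamondsuit(S_0)$- and $\diamondsuit(S_1)$-sequences in the ground model. The diamond sequence guesses initial segments $A\cap\alpha$ of a code $A\subseteq\omega_2$ for the triple (group operation, topology, ideal); the closing-off club meets the stationary set of correct guesses, and at such an $\alpha\in S_1$ the construction really did force with $\L_{\I_\alpha^{*}}$. Note also a secondary error in your sketch: a groomed counterexample $\G$ is \emph{not} coded by a single real --- it is certainly not first countable, so the topology $\tau$, like $\I$, is a family of size $\C=\omega_2$ and must be reflected by the same device (this is why the diamond guesses a name for the topology as well). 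Finally, be careful with your hypothesis: the paper assumes CH together with the two diamonds rather than deriving them, and with reason --- Gregory's theorem gives diamond from GCH only at cofinality-$\omega$ points of $\omega_2$, so at cofinality-$\omega_1$ points you should simply assume the guessing sequences (they hold, e.g., in $L$). With $\diamondsuit(S^2_1)$-guessing in place of your bookkeeping-plus-submodel device, the rest of your outline goes through essentially as in the paper.
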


\begin{proof}  Assume that the ground model $\mathbf{V}$ satisfies CH,
  split the set $S_1^2$---the stationary subset of $\omega_2$
  consisting of ordinals of cofinality $\omega_1$---into two
disjoint stationary sets $S_0$ and $S_1$, and suppose 
$\langle A_{\alpha} \colon \alpha \in S_{1}^{2}\rangle$ witnesses that both
$\diamondsuit(S_0)$, and   $\diamondsuit(S_1)$ hold\footnote{Given a
  stationary subset $S$ of $\omega_2$,
the principle $\diamondsuit(S)$ asserts the existence of a sequence
$\langle A_\alpha: \alpha \in S\rangle$ of subsets of $\omega_2$ such
that for any $A\subset \omega_2$ the set $\{\alpha\in S: A_\alpha=
A\cap\alpha\}$ is stationary.}.

Construct a finite
support iteration $\mathbb P_{\omega_2} =\langle \mathbb
P_\alpha,\dot{\Q}_\alpha :\alpha<\omega_2 \rangle$ so that at
a stage $\alpha \in S_0$, if $A_{\alpha}$ codes a $\mathbb
P_\alpha$-name for a $\sigma$-centered forcing $\hat{\Q}$
which strongly preserves $\omega$-hitting families, then let
$\Q_\alpha=\hat{\Q}$, otherwise let
$\dot{\mathbb{Q}}_{\alpha}$ be a $\mathbb{P}_{\alpha}$-name for
$\L_{\nwd^{*}(\Q)}$ where $\Q$ are the
rational numbers; at a stage $\alpha \in S_1$, if $A_{\alpha}$ codes
a group operation $\circ$ on $\omega$, a $\mathbb{P}_{\alpha}$-name
for a regular group topology $\tau$ with no isolated points on
$(\omega,\circ)$ and a $\circ$-invariant tame ideal such that
neither (1) nor (2) of the \IIA\ hold, we let
$\dot{\mathbb{Q}}_{\alpha}$ be a $\mathbb{P}_{\alpha}$-name for
$\L_{\mathcal I^{*}}$. If $\alpha$ is not of this form, let
$\dot{\mathbb{Q}}_{\alpha}$ be again $\mathbb{P}_{\alpha}$-name for
$\L_{\nwd^{*}(\Q)}$ where $\Q$ are the
rational numbers.
\smallskip
Let $G_{\omega_{2}}$ be a $\mathbb P_{\omega_2}$-generic over
$\mathbf{V}$. A standard argument shows that {\sf MA} for
$\sigma$-centered partial orders strongly preserving $\omega$-hitting
families holds in ${\bf V}[G_{\omega_2}]$.

\smallskip

We shall show that, in ${\bf V}[G_{\omega_2}]$, \IIA\ holds.

\smallskip

Aiming toward a contradiction, assume that in
$\mathbf{V}[G_{\omega_{2}}]$ there is a countable groomed group
$\G$ with a group topology $\tau$ and a tame invariant ideal
$\mathcal I$ on $\G$ satisfying neither (1) nor (2) of \IIA.

Now, by a standard closing off argument, there is a set $E \subset
S_{1}^{2}$ which is a club relative to $S_{1}^{2}$ such that for all
$\alpha \in E$,
\begin{enumerate}
\item 
${\bf V}[G_{\alpha}] \models \tau_{\alpha}$ is groomed where
  $\tau_{\alpha}=\tau \cap {\bf V}[G_{\alpha}]$,

\item ${\bf V}[G_{\alpha}] \models \mathcal I_{\alpha}=\mathcal I\cap
  {\bf V}[G_{\alpha}]$ is a $\G$-invariant tame ideal 
  satisfying neither (1) nor (2) of \IIA, and
\item every sequence in ${\bf V}[G_{\alpha}]$ which is
  $\tau_{\alpha}$-convergent is forced to be $\tau$-convergent in
  ${\bf V}[G_{\omega_2}]$.
\end{enumerate}

Therefore, at some stage $\alpha \in S_1$, we would have added a set
$A_{gen}$ such that ${\bf V}[G_{\alpha + 1}] \models A_{gen}$ is a
dense entangled subset of $\G$, i.e.  the ideal
$\restr{\mathcal{I}_{g}(\tau_{\alpha})}{A_{gen}}$ is $\omega$-hitting
for every $g\in \G$ (Proposition \ref{P:MainProposition} (1)).

We claim that $A_{gen}$ is also dense in ${\bf V}[G_{\omega_2}]$, i.e. $A_{gen}
\in \mathcal{I}_{g}^{+}(\tau)$ for every $g\in\G$.  As $\G$ is a group
it suffices to show this at $\zG$. If it were not true, in
${\bf V}[G_{\omega_2}]$, there is a $\tau$-open neighborhood $U$ of
$0$ disjoint from $A_{gen}$ such that $U \cdot U \cap
A_{gen}=\varnothing$. Then $Y=\G \setminus U$ is
$A_{gen}$-large. By Lemma \ref{L:sealingII}, in ${\bf V}[G_{\alpha +
    1}]$ the ideal
$\mathcal{I}_{0}^{\perp}(\tau_{\alpha})=\mathcal{I}^{\perp}_{0}(\tau)\cap
\mathbf V [ G_\alpha]$ is $\omega$-hitting w.r.t.  $A_{gen}$, and by
Lemmata \ref{L:Strongly-Preservation-wrt} and
\ref{L:Preservation-wrt}, it follows that the ideal
$\mathcal{I}_{0}^{\perp}(\tau_{\alpha})$ is also $\omega$-hitting
w.r.t.  $A_{gen}$ in ${\bf V}[G_{\omega_2}]$. In particular, there is
a $C \in \mathcal{I}_{0}^{\perp}(\tau_{\alpha})$ such that $C\cap Y$
is infinite, \emph{i.e.}, $C\in {\bf V}[G_\alpha]$ is a sequence
$\tau_\alpha$-converging to $0$, which intersects $Y$ infinitely
often. However, by the item 2, $\mathcal{I}_{0}^{\perp}(\tau_{\alpha})
\subset \mathcal{I}_{0}^{\perp}(\tau)$, therefore $C$ is also
$\tau$-converging to $0$. This however leads to a contradiction as
both $Y \cap U = \varnothing$ and $C\setminus U$ is finite.

 By Proposition \ref{P:MainProposition} (2) and Lemma
 \ref{L:Preservation}, in ${\bf V}[G_{\omega_2}]$ the ideal
 $\restr{\mathcal{I}_{g}(\tau_{\alpha})}{A_{gen}}$ is $\omega$-hitting
 for every $g\in\G$ contradicting that $\G$ was groomed
 in ${\bf V}[G_{\omega_2}]$.
\end{proof}

\section{Countable sequential groups under \IIA}

In this section we prove the main result of the paper, which confirms a
conjecture of the second author \cite{shibakov-nointeresting} by
proving a common extension of Theorems \ref{malykhin} and
\ref{no-inter} that provides the ultimate (consistent) classification
for the topologies of countable sequential topological groups, namely:

\begin{theorem}\label{ctble-sequential}
Assuming \IIA, every countable sequential group is either
metrizable or $k_\omega$.
\end{theorem}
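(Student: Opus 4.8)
The plan is to split on whether $\G$ is Fréchet and, in the non-Fréchet case, to feed the ideal $\cpt(\G)$ into \IIA. First I would dispose of the trivial possibilities: if $\G$ is discrete it is metrizable, so I assume $\G$ is non-discrete and hence, being a topological group, without isolated points. Since $\G$ is countable it is separable, so if $\G$ happens to be Fréchet then Theorem~\ref{just.Frechet} already gives that $\G$ is metrizable. I may therefore assume that $\G$ is \emph{not} Fréchet. By Proposition~\ref{ts.properties}(2) this means $\G$ contains a closed copy of $S(\omega)$. On the other hand, in a sequential space no non-closed set can be almost disjoint from every convergent sequence: a non-closed, hence non-sequentially-closed, set $D$ contains a sequence converging to a point outside $D$, and that sequence meets $D$ in an infinite set. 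Thus by Proposition~\ref{ts.properties}(3) the group $\G$ does \emph{not} contain a closed copy of $D(\omega)$. The goal is now to prove that such a $\G$ is $\kw$.

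Being non-discrete and sequential, $\G$ is groomed, so \IIA\ applies to $\G$ together with any tame, weakly closed, invariant ideal. I would use $\cpt(\G)$, which is invariant and weakly closed by the remarks following the definition of weakly closed ideals; tameness of $\cpt(\G)$ I would isolate as a separate lemma, the point being that a positive (i.e.\ not compact-bounded) set cannot be mapped to $\omega$ so that all fibres are compact-bounded in a way that is Katětov-above an $\omega$-hitting ideal, using that a countable countably compact space is compact. Applying \IIA, suppose first that alternative~\ref{seq.capture} holds: there is a countable $\SS\subseteq\cpt(\G)$ meeting every convergent sequence in an infinite set. Replacing each $S\in\SS$ by a compact set $K\supseteq S$, I claim the countable family $\K$ of these $K$ witnesses that $\G$ is a $\kw$-space. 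Indeed one direction of Definition~\ref{kw.def} is automatic, and for the other it suffices, as $\G$ is sequential, to check that a set $F$ with $F\cap K$ closed for every $K\in\K$ is sequentially closed: if $C\subseteq F$ converges to $x$ then $C$ meets some $K$ in an infinite set, that infinite subsequence still converges to $x$, and $x\in K$ forces $x\in F\cap K\subseteq F$; the members of $\K$ cover $\G$ since every point is the limit of some captured sequence. Thus in this case $\G$ is $\kw$ and we are done.

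It remains to rule out alternative~\ref{almost.pi}, and this is the crux. So suppose there is a countable almost $\pi$-network $\H=\{H_n:n\in\omega\}\subseteq\cpt(\G)^+$. The strategy is to manufacture from $\H$ a closed copy of $D(\omega)$, contradicting the first paragraph. Each $H_n$ is not contained in any compact set, so $\overline{H_n}$ is a countable non-compact space, hence not countably compact, and therefore contains an infinite closed discrete set; from this one extracts an infinite closed discrete \emph{column} $E_n\subseteq H_n$. The almost $\pi$-network property, localised at a fixed non-isolated point (say $1_{\G}$, by homogeneity), says that every neighbourhood $U$ of $1_{\G}$ swallows some $H_n$ modulo a compact set; since $E_n$ is closed discrete, $E_n\setminus U$ is then both closed discrete and compact-bounded, hence finite, so $E_n\subseteq^* U$. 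Thus arbitrarily many columns drift into every neighbourhood of $1_{\G}$. A recursive thinning-and-selection of the $E_n$ (translating by group elements where convenient) then arranges countably many pairwise disjoint closed discrete columns whose union, together with $1_{\G}$, is closed in $\G$, has $1_{\G}$ as its only accumulation point, and realises precisely the $D(\omega)$ pattern: each selected column is almost contained in every prescribed neighbourhood of $1_{\G}$ while remaining closed discrete. This produces a closed copy of $D(\omega)$, the sought contradiction; hence alternative~\ref{almost.pi} cannot hold, alternative~\ref{seq.capture} does, and $\G$ is $\kw$.

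The main obstacle is exactly this last construction. The almost $\pi$-network only guarantees that \emph{some} column enters a given neighbourhood, whereas the $D(\omega)$ topology requires that \emph{cofinitely many} of the selected columns be almost inside every neighbourhood of the limit, with no extraneous accumulation points and with the union closed in $\G$. Converting the former into the latter—coherently across the (possibly uncountably based) neighbourhood filter of $1_{\G}$, and keeping the columns genuinely closed discrete after selection—is where the group structure and a careful fusion must be combined, and it is the step I expect to be most delicate. The only other point requiring genuine (if routine) verification is the tameness of $\cpt(\G)$, needed to license the application of \IIA.
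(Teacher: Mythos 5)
Your outer skeleton is sound, and parts of it do match the paper: the Fr\'echet case via Theorem~\ref{just.Frechet}, the observation that a non-Fr\'echet sequential group contains a closed copy of $S(\omega)$ and hence (by Proposition~\ref{ts.properties}(3) plus sequentiality) no closed copy of $D(\omega)$, and the argument that alternative~\ref{seq.capture} for $\cpt(\G)$ yields $\kw$ are all correct and are also present, in some form, in the paper. The gap is exactly at the step you yourself flag as delicate: ruling out alternative~\ref{almost.pi} for $\cpt(\G)$ by manufacturing a closed copy of $D(\omega)$. The almost $\pi$-network gives you, for each neighborhood $U$ of $\uG$, \emph{some} closed discrete column $E_n$ with $E_n\ain U$; a closed embedded $D(\omega)$ requires a single countable sequence of columns such that every neighborhood of $\uG$ almost contains \emph{cofinitely many} of them (indeed with uniform control on the finite exceptional sets), coherently across the whole neighborhood filter of $\uG$ --- a filter which, in the non-metrizable case at hand, is not countably generated. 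Your ``recursive thinning-and-selection with a careful fusion'' therefore has nothing countable to fuse along: after any re-selection $F_k\subseteq E_{n_k}$ the $\pi$-network property is no longer tied to the chosen columns, and nothing prevents every neighborhood of $\uG$ from being witnessed by columns you discarded. This is not a technical refinement left to the reader; it is the entire content of the theorem.

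The paper's proof shows what is actually needed to close this gap, and it is structurally different: \IIA\ is applied not once but three times, along the chain $\nwd(\G)\to\csct(\G)\to\cpt(\G)$ (Lemma~\ref{iaa.chain}). Alternative~\ref{almost.pi} always fails for $\nwd(\G)$ when $\G$ is non-metrizable, so \IIA\ yields a countable $\nwd$-capturing family; Lemma~\ref{nwd.pi} uses \emph{that family} to rule out alternative~\ref{almost.pi} for $\csct(\G)$, so \IIA\ yields a closed scattered capturing family; and Lemma~\ref{scattered.vD} uses \emph{that scattered family} (a closed scattered set absorbing all convergent sequences, of minimal Cantor--Bendixson height) to prove that no point of $\G$ is a \vD-point of any countable family of closed discrete sets, which is what kills alternative~\ref{almost.pi} for $\cpt(\G)$; the contradiction there is with sequentiality directly, not via an embedded $D(\omega)$. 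Tellingly, the one place where the paper does produce a closed copy of $D(\omega)$ (inside the proof of Lemma~\ref{nwd.pi}) it can do so only after constructing a countable relative neighborhood base at $\uG$ inside a dense-in-itself set, using the absorbing-set machinery --- precisely the countably generated structure your construction presupposes but never obtains. So the proposal is missing the paper's key idea, the chain of ideals each supplying the capturing family needed for the next step. (Two smaller gaps: tameness of $\cpt(\G)$, which the paper handles via Lemmas~\ref{pretame} and~\ref{tame}; and extracting an infinite closed discrete $E_n\subseteq H_n$, rather than merely $E_n\subseteq\cl{H_n}$, which needs a short collectionwise-normality argument for countable regular spaces.)
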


Countable $\kw$ groups are completely classified
by their {\it compact scatteredness rank\/} defined as the supremum of
the Cantor-Bendixson index of their compact subspaces by the theorem
of Zelenyuk:

\begin{theorem}[E.~Zelenyuk \cite{zelenyuk}]
Countable $\kw$ groups of the same compact scatteredness rank are
homeomorphic.
\end{theorem}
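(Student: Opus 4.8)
The plan is to prove the homeomorphism by a back-and-forth construction, producing the homeomorphism as an increasing union of homeomorphisms between the compact layers of the two groups. Write $G=\bigcup_n K_n$ and $H=\bigcup_n L_n$ for the given countable \kw-groups, where $K_0\subseteq K_1\subseteq\cdots$ and $L_0\subseteq L_1\subseteq\cdots$ are increasing sequences of compact sets witnessing the \kw-property; since every compact subset of a countable \kw-space is contained in some layer, the compact scatteredness rank of $G$ is exactly $r=\sup_n\scl(K_n)$, and likewise for $H$. Each $K_n$ is a countable compact Hausdorff, hence metrizable and scattered, space, so by the Mazurkiewicz--Sierpi\'nski theorem it is homeomorphic to a countable successor ordinal and is determined up to homeomorphism by its height $\scl(K_n)$ together with the (finite) cardinality of its top level. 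The first reduction is the observation that it suffices to produce cofinal subsequences and a \emph{ladder} of homeomorphisms $h_k\colon K_{a_k}\to L_{b_k}$ with $\restr{h_{k+1}}{K_{a_k}}=h_k$ and $h_k[K_{a_k}]=L_{b_k}$: since a map out of a \kw-space is continuous exactly when its restriction to each layer is, the union $\bigcup_k h_k$ is then a homeomorphism $G\to H$ (its inverse is continuous by the same criterion applied to $H$).

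The engine of the ladder is an \emph{extension (saturation) lemma}: if $G,H$ are countable \kw-groups of the same rank $r$, $\varphi\colon K\to L$ is a homeomorphism between compact subsets, and $K^{+}\supseteq K$ is compact in $G$, then there are a compact $L^{+}\supseteq L$ in $H$ and a homeomorphism $\varphi^{+}\colon K^{+}\to L^{+}$ extending $\varphi$. To prove it one analyzes $K^{+}$ as obtained from $K$ by attaching, over the points of $K$ and of $K^{+}\setminus K$, finitely many compact scattered ``gadgets'' of height $\le r$ (convergent sequences, fans, and their iterates), and then realizes the same attachments inside $H$ over the points dictated by $\varphi$. Here the group structure of $H$ enters twice. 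First, homogeneity: a single compact witness of a given height can be translated to be based at any prescribed point, so a gadget can be placed wherever $\varphi$ requires. Second, the multiplication manufactures height: the product of two convergent sequences based at a point is a compact set of strictly larger Cantor-Bendixson height, and iterating this inside $H$ produces, at every point, compact scattered subspaces realizing every height $<r$ (and height $r$ when the rank is attained). Genericity --- again from homogeneity together with non-discreteness, which guarantees an abundance of nontrivial convergent sequences --- lets us choose each new gadget to meet the previously constructed compactum only at its base point, so the attachments stay disjoint and $\varphi^{+}$ is a homeomorphism onto its image.

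With the extension lemma in hand the back-and-forth is routine. Enumerate $G=\{g_k:k\in\omega\}$ and $H=\{y_k:k\in\omega\}$, and build $h_k\colon A_k\to B_k$ between compact sets by alternately (\emph{forth}) replacing $A_k$ by a layer $K_p\supseteq A_k\cup\{g_k\}$ and applying the lemma to extend the homeomorphism over $H$, and (\emph{back}) replacing $B_k$ by a layer $L_q\supseteq B_k\cup\{y_k\}$ and applying the lemma with the roles of $G$ and $H$ reversed --- legitimate precisely because the two groups share the rank $r$, so that only height-$\le r$ configurations are ever demanded on either side. Then $\bigcup_k A_k=G$, $\bigcup_k B_k=H$, the $h_k$ cohere, and $\bigcup_k h_k$ is the desired homeomorphism.

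The main obstacle is the extension lemma, that is, the \emph{saturation} of a rank-$r$ \kw-group with respect to compact scattered subspaces of height $\le r$. Two points require care. One must show that homogeneity together with the attainment (or cofinal approximation) of the rank genuinely makes \emph{every} countable compact scattered space of height $\le r$ embeddable over a prescribed base point and disjoint from a prescribed compactum; this is direct for successor $r$ through the product construction, but is delicate when $r$ is a limit ordinal, where a height-$r$ configuration must be assembled from a cofinal sequence of heights supplied by the layers. One must also control the interaction with the \kw-convergence: the character of such a group can be as large as $\mathfrak d$ (already for $S(\omega)$), so the extended bijection is verified to be a homeomorphism layer-by-layer rather than globally --- which is exactly what the layered ladder and the direct-limit criterion for continuity deliver.
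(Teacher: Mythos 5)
You should first know that the paper itself contains no proof of this statement: it is quoted with attribution from Zelenyuk's paper \cite{zelenyuk}, so there is no internal argument to compare against. On its own merits, your architecture is the right one and matches the shape of the known proof: reduce to increasing witnessing families $\{K_n\}$, $\{L_n\}$, note that every compact set lies in some layer (so the rank is $\sup_n\scl(K_n)$), build a coherent ladder of homeomorphisms between cofinal compacta by back-and-forth, and glue by the direct-limit criterion for continuity; that bookkeeping in your first and last paragraphs is correct, as is the observation that continuity must be verified layer-by-layer since the character may be $\mathfrak d$.

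The genuine gap is in the extension lemma, which you treat as a sketch precisely where the theorem lives. First, ``attaching finitely many gadgets'' is wrong in general: already for $K$ a convergent sequence $\{x_n\}\to x$ and $K^+$ attaching a sequence at every $x_n$, infinitely many gadgets are attached, and the whole difficulty is that their realizations in $H$ must have \emph{compact} union with the correct limit behavior at $\varphi(x)$ --- and in a non-metrizable group one cannot shrink the gadgets into a neighborhood base. The standard mechanism (which your ``multiplication manufactures height'' remark gestures at but does not supply) is to realize all gadgets along a sequence as translates $y_n\cdot D_n$, where $D_n$ is a decreasing sequence of clopen neighborhoods of the top point $1$ inside a \emph{single} compact $P$ with $\bigcap_n D_n=\{1\}$; compactness of the union then follows because it is closed in the compact set $L\cdot P$, and the limit behavior is forced by $\bigcap_n D_n=\{1\}$. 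Second, your ``genericity'' step --- that gadgets can be chosen meeting the previously built compactum only at the base point --- is not free in a \kw\ group, since \emph{every} convergent sequence is captured (mod finite) by the countable witnessing family; one must actually prove that no single compact $L$ captures all sequences converging to a point. This requires an argument such as: if every $C\to 1$ met $L$ infinitely, then for every compact $M\ni 1$ we would have $1\notin\cl{M\setminus L}$ (else a sequence in $M\setminus L$ converging to $1$ avoids $L$), so a clopen neighborhood of $1$ in $M$ lies in $L$ and $\scl(1,M)\leq\scl(1,L)$; by homogeneity this bounds the rank by $\scl(L)+1$, contradicting that the rank of a nondiscrete countable \kw\ group is a limit ordinal strictly above the (successor) height of each of its compacta. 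This last point also shows your parenthetical ``height $r$ when the rank is attained'' is vacuous: countable compact scattered spaces have successor height while the rank is an indecomposable limit ordinal, so the rank is never attained in the nondiscrete case, and the limit case you flag as delicate is in fact the only case.
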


To give a more concrete feel for how strong Theorem
\ref{ctble-sequential} actually is, let us introduce the following
notation: Given an indecomposable ordinal\footnote{An ordinal number
  $\alpha$ is \emph{indecomposable} if it cannot be written as an
  ordinal sum of two strictly smaller ordinals, equivalently, there is
  a $\beta\leq\alpha$ such that $\alpha=\omega^\beta$, where
  $\omega^\beta$ denotes ordinal exponentiation.}
$\alpha<\omega_1$ let $\mathcal K_\alpha$ be a fixed countable family
of compact subsets of the rationals $\Q$ closed under
translations, inverse and algebraic sums such that $\alpha=\sup
\{\rank_{CB} (K):K\in\mathcal K_\alpha\}$, and let
$$\tau_\alpha=\{U\subseteq\Q: \ \forall K\in\mathcal K_\alpha:
U\cap K\text{ is open in }K\}.$$
Then $\tau_\alpha$ is a $k_\omega$ sequential group topology on
$\Q$ and we denote $\Q_\alpha=(\mathbb
Q,\tau_\alpha)$. Note, in particular, that if $\alpha=0$ then $\tau_0$
is the discrete topology on $\Q$, and that the usual topology
on $\Q$ is similarly determined by taking into account
\emph{all} of its compact subsets, so it makes sense to denote it as
$\Q_{\omega_1}$. Hence Theorem \ref{ctble-sequential} can be
reformulated as:

\begin{theorem}
Assuming \IIA, for every infinite countable sequential group
$\G$ there is exactly one $\alpha\leq \omega_1$ such that
$\G$ is homeomorphic to $\Q_{\omega^\alpha}$.
\end{theorem}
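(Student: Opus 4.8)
The plan is to read this statement as the combination of the dichotomy in Theorem~\ref{ctble-sequential} with Zelenyuk's classification theorem~\cite{zelenyuk}, organized around the single invariant $\scr(\G)=\sup\{\rank_{CB}(K):K\subseteq\G\text{ compact}\}$, the compact scatteredness rank. Since $\scr$ is defined purely through compact subspaces and iterated Cantor--Bendixson derivatives, it is a homeomorphism invariant. For $\alpha<\omega_1$ one has $\scr(\Q_{\omega^\alpha})=\omega^\alpha$: in the \kw-space $\Q_{\omega^\alpha}$ every compact set lies in a finite union of members of $\mathcal K_{\omega^\alpha}$ and so has $\rank_{CB}\le\omega^\alpha$, while the members of $\mathcal K_{\omega^\alpha}$ realize ranks cofinal in $\omega^\alpha$; and $\scr(\Q_{\omega_1})=\omega_1$, since the compact subsets of the usual $\Q$ are exactly the countable compact metric spaces, realizing all countable ranks. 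Because $\alpha\mapsto\omega^\alpha$ is strictly increasing on $[0,\omega_1]$ with $\omega^{\omega_1}=\omega_1$ (the ordinals $\omega^\alpha$, $\alpha<\omega_1$, being countable and cofinal in $\omega_1$), the values $\scr(\Q_{\omega^\alpha})$ are pairwise distinct; this already yields \emph{uniqueness}, since $\G\cong\Q_{\omega^\alpha}$ and $\G\cong\Q_{\omega^\beta}$ would force $\omega^\alpha=\scr(\G)=\omega^\beta$, hence $\alpha=\beta$.

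For \emph{existence} fix an infinite countable sequential group $\G$. By homogeneity $\G$ either is discrete --- the degenerate base case $\Q_0$, with $\scr(\G)=0$ --- or has no isolated point, which I now assume. Theorem~\ref{ctble-sequential} then gives two cases, mutually exclusive for a non-discrete group (a countable non-discrete metrizable group is not locally compact, hence not \kw). If $\G$ is metrizable, then as a countable metrizable space without isolated points it is homeomorphic, by Sierpi\'nski's characterization of the rationals, to $\Q$ with its usual topology, that is, to $\Q_{\omega_1}=\Q_{\omega^{\omega_1}}$, and $\scr(\G)=\omega_1$. If instead $\G$ is \kw, put $\delta=\scr(\G)$; granting the claim below that $\delta=\omega^\alpha$ for some $\alpha\le\omega_1$, the space $\Q_{\omega^\alpha}$ is a countable \kw\ group (as recorded in the excerpt) with $\scr(\Q_{\omega^\alpha})=\omega^\alpha=\delta=\scr(\G)$, so Zelenyuk's theorem delivers a homeomorphism $\G\cong\Q_{\omega^\alpha}$.

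It remains to prove the claim that for a non-discrete \kw\ group the rank $\delta=\scr(\G)$ is additively indecomposable, i.e.\ of the form $\omega^\alpha$. The engine is a product estimate: if $K,L\subseteq\G$ are compact with $\rank_{CB}(K)=\beta$ and $\rank_{CB}(L)=\gamma$, then, after translating so that a top-level point of each sits at $\uG$, the compact set $K\cdot L$ satisfies $\rank_{CB}(K\cdot L)\ge\beta+\gamma$ --- the model case being that a convergent sequence multiplied by a convergent sequence yields a copy of the Arens space $S_2$, of rank $2=1+1$. Consequently the set $R$ of ranks realized by compact subsets of $\G$ is closed under ordinal addition. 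Were $\delta\in R$, say realized by $K_0$, then $K_0\cdot K_0$ would have rank $\ge\delta+\delta>\delta$ (here $\delta\ge1$), contradicting $\delta=\sup R$; hence $\delta\notin R$, so all realized ranks lie below $\delta$ yet are cofinal in it. Then for any $\beta,\gamma<\delta$ one chooses realized $\beta'\ge\beta$ and $\gamma'\ge\gamma$ below $\delta$, and $\beta+\gamma\le\beta'+\gamma'\in R\subseteq[0,\delta)$, so $\beta+\gamma<\delta$; thus $\delta$ is indecomposable. (For the metric group this is the boundary instance $\delta=\omega_1=\omega^{\omega_1}$.)

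The main obstacle is precisely the product rank estimate $\rank_{CB}(K\cdot L)\ge\beta+\gamma$. A naive attempt via the multiplication map $K\times L\to K\cdot L$ is insufficient, because continuous images may lower the Cantor--Bendixson rank; the estimate must therefore be obtained either by choosing $K$ and $L$ inside sufficiently independent parts of $\G$ so that multiplication does not collapse rank near $\uG$, or by a direct level-by-level analysis of how the derivatives of $K$ and $L$ interact under the operation. This is the one point where the topological dichotomy of Theorem~\ref{ctble-sequential} genuinely has to be fused with the group structure; everything else --- homogeneity, Sierpi\'nski's theorem, the topological invariance of $\scr$, and Zelenyuk's classification --- is elementary or quotable.
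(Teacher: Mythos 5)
Your route is the same as the paper's: the paper gives no separate proof of this theorem, presenting it simply as a reformulation of Theorem~\ref{ctble-sequential} obtained by combining the metrizable/\kw\ dichotomy with Zelenyuk's classification, and your three-case existence argument (discrete case, Sierpi\'nski's theorem for the non-discrete metrizable case, Zelenyuk for the \kw\ case) together with uniqueness via the invariant $\scr$ is exactly that reading, carried out in detail. The one point where you diverge is that you try to \emph{prove}, rather than quote, that $\scr(\G)$ is additively indecomposable for a countable \kw\ group. The paper takes this from \cite{zelenyuk}: Zelenyuk's classification includes the statement that the realizable compact scatteredness ranks are precisely the ordinals $\omega^\alpha$, which is exactly why the spaces $\Q_\alpha$ are defined only for indecomposable $\alpha$. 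Since you explicitly leave the product rank estimate $\rank_{CB}(K\cdot L)\geq\beta+\gamma$ unproven, calling it ``the main obstacle,'' your write-up as it stands has a gap at precisely the point the paper closes by citation.

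That gap is real but easy to fill, and by the ``direct level-by-level analysis'' you allude to; no independence or collision control is needed, because you only want a \emph{lower} bound and Cantor--Bendixson derivatives are monotone under inclusion (if $A\subseteq B$ then $(A)_\mu\subseteq(B)_\mu$ for all $\mu$). Translate so that $\uG\in(K)_\beta$ and $\uG\in(L)_\gamma$, and prove by induction on $\nu\leq\gamma$ that $(L)_\nu\subseteq(K\cdot L)_{\beta+\nu}$. For $\nu=0$: right translation by $l$ is a homeomorphism of $\G$, so $(K\cdot l)_\beta=(K)_\beta\cdot l\ni l$, and $K\cdot l\subseteq K\cdot L$ gives $l\in(K\cdot L)_\beta$. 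For $\nu=\mu+1$: any $l\in(L)_{\mu+1}$ is a limit point of $(L)_\mu$, which by the inductive hypothesis lies inside $(K\cdot L)_{\beta+\mu}$, so $l\in(K\cdot L)_{\beta+\mu+1}$. At limit $\nu$ both sides are intersections and $\beta+\nu=\sup_{\mu<\nu}(\beta+\mu)$. Taking $\nu=\gamma$ and $l=\uG$ yields $\uG\in(K\cdot L)_{\beta+\gamma}$, which is your estimate; since $K\cdot L$ is compact, the set of realized ranks is closed under addition in the sense your indecomposability argument requires, and that argument then goes through as you wrote it. With this lemma supplied (or with Zelenyuk quoted in full, as the paper does) your proof is complete; the remaining issues are cosmetic, e.g.\ the indexing of the discrete case, where $\Q_0$ is not literally $\Q_{\omega^0}=\Q_1$ --- a glitch your write-up shares with the paper's own notation.
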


Note that in the argument above we may have started with an arbitrary
countable \emph{topologizable} (i.e.\ admitting a nondiscrete Hausdorff group
topology) group $\G$ instead of $\Q$ by possibly choosing a coarser
first countable topology on $\G$ first. Thus \emph{every}
countable topologizable group admits every possible $k_\omega$ group
topology showing that in a model of \IIA\ the algebraic structure of
the group has almost no influence on the kind of sequential topology
the group can admit. Indeed, in such models the number of
nonisomorphic topologizable countable groups ($\C$) is
greater than the number of nonhomeomorphic sequential group topologies ($\omega_1$).

\smallskip

We shall prove Theorem \ref{ctble-sequential} in a sequence of
lemmata.

\begin{lemma}\label{scattered.vD}Let $\G$ be a countable nondiscrete
sequential group. Suppose ${\mathcal P}\subseteq\csct(\G)$ is a countable
family such that for every $S\to\uG $ there exists a
$P\in{\mathcal P}$ such that $|S\cap P|=\omega$. Let ${\mathcal
D}\subseteq[\G]^\omega$ be a countable family of closed discrete
subsets of $\G$. Then for every $g\in \G$ there exists an open $U\ni g$
such that $U\cap D$ is finite for every $D\in{\mathcal D}$.
\end{lemma}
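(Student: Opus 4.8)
The plan is to argue by contradiction and reduce everything to the identity. First I would replace $\mathcal D$ by $\{g^{-1}D:D\in\mathcal D\}$; left translation is a homeomorphism carrying closed discrete sets to closed discrete sets, a neighbourhood $U'\ni\uG$ with $U'\cap g^{-1}D$ finite corresponds to $U=gU'\ni g$ with $U\cap D$ finite, and neither the hypothesis $\mathcal P\subseteq\csct(\G)$ nor the sequence-capturing property (which only speaks about sequences converging to $\uG$) is affected. So I may assume $g=\uG$; deleting $\uG$ from each $D$ (which keeps it closed, discrete and infinite) I may also assume $\uG\notin D$ for every $D\in\mathcal D$, and enumerate $\mathcal D=\{D_n:n\in\omega\}$, $\mathcal P=\{P_k:k\in\omega\}$. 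The first observation is the reformulation: no admissible $U$ exists if and only if for every choice of finite sets $E_n\subseteq D_n$ one has $\uG\in\cl{\bigcup_n(D_n\setminus E_n)}$, call this $(\ast)$ --- indeed an admissible $U$ yields $E_n=U\cap D_n$ with $\bigcup_n(D_n\setminus E_n)\subseteq\G\setminus U$ closed and missing $\uG$, and conversely if some $E_n$ witness $\uG\notin\cl{\bigcup_n(D_n\setminus E_n)}$ then $U=\G\setminus\cl{\bigcup_n(D_n\setminus E_n)}$ works. A second reduction: since each $P_k$ is closed, if $\uG\notin P_k$ then a neighbourhood of $\uG$ misses $P_k$, so every sequence converging to $\uG$ meets such a $P_k$ finitely; hence only the sets $\mathcal P_0=\{P_k:\uG\in P_k\}$ can ever capture a sequence converging to $\uG$, and it suffices to produce a sequence $S\to\uG$ with $|S\cap P|<\omega$ for all $P\in\mathcal P_0$.

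Next, writing $A=\bigcup_n D_n$, the case $E_n=\varnothing$ of $(\ast)$ gives $\uG\in\cl A$, and its tail form ($\uG\in\cl{\bigcup_{n\ge m}D_n}$ for all $m$, obtained by first intersecting with neighbourhoods killing $D_0,\dots,D_{m-1}$) shows $\uG$ genuinely accumulates to the whole family. Using sequentiality I would then run the inductive construction from the proof that subsequential spaces are groomed, but inside $A$: by induction on the least $\alpha$ with $\uG\in[A]_\alpha$ I extract a countable family of pairwise disjoint convergent sequences $\{C_t:t\in\omega\}\subseteq A$, $C_t\to z_t$, with $z_t\to\uG$, such that $\uG\in\cl Z$ for every $Z$ meeting each $C_t$ infinitely. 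Because each $D_n$ is closed and discrete and $C_t$ is convergent, $|C_t\cap D_n|<\omega$ for all $t,n$ (an infinite intersection would force the limit into $D_n$ as a non-isolated point); thus each $C_t$ is spread across infinitely many of the $D_n$, which is exactly the $D(\omega)$-like configuration that the capturing hypothesis ought to destroy.

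The heart of the argument --- and the step I expect to be the main obstacle --- is to upgrade this family into a single \emph{genuine} sequence $S\to\uG$ that evades every member of $\mathcal P_0$, thereby contradicting sequence capturing. The difficulty is that $\G$ is not first countable, so one cannot simply diagonalise the $C_t$ to force convergence: membership in $\cl Z$ does not by itself yield a sequence converging to $\uG$ (this is precisely the gap between sequential order $1$ and $2$ witnessed by $S_2$). To close it I would exploit the scattered structure of the capturing sets: for a fixed $t$, only those $P\in\mathcal P_0$ with $z_t\in P$ can meet $C_t$ infinitely, so after thinning $C_t$ by a finite set I can keep it convergent while dodging the finitely many sets $P_0,\dots,P_t$; feeding the robust accumulation $(\ast)$ (which survives deletion of finite pieces) into the base level of the induction then lets me realise $\uG$ as an actual sequential limit of the thinned, dodging points rather than a mere closure point. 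Organising these finite deletions along the Cantor--Bendixson levels of the $P\in\mathcal P_0$ as a fusion --- so that at each stage only finitely many constraints are active and convergence is preserved --- should yield the desired $S$. The resulting $S\to\uG$ with $|S\cap P|<\omega$ for every $P\in\mathcal P$ contradicts the capturing property of $\mathcal P$, completing the proof; the fact that $D(\omega)$ satisfies the negation of the conclusion while failing the capturing hypothesis confirms that this use of both capturing and scatteredness is unavoidable.
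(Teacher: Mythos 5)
Your preliminary reductions are fine (translating to $\uG$, the reformulation $(\ast)$, discarding the $P$'s not containing $\uG$, and the observation that $|C_t\cap D_n|<\omega$), but the proof has a genuine gap at exactly the step you yourself flag as the main obstacle, and the mechanism you sketch for closing it does not work. First, the claim that ``after thinning $C_t$ by a finite set'' you can dodge $P_0,\dots,P_t$ is false: nothing in the hypotheses prevents some $P\in\mathcal P$ with $z_t\in P$ from containing $C_t$ modulo a finite set (the capturing family may well capture sequences converging to points other than $\uG$), and then no finite deletion helps. Second, and more fundamentally, even if the dodging could be arranged, nothing in your sketch produces an actual sequence converging to $\uG$: the property $(\ast)$ and the selector property of $\{C_t\}$ give only closure accumulation, and converting closure accumulation into convergence while avoiding a prescribed countable family is precisely the Fr\'echet/sequential gap you acknowledge; ``organising the deletions as a fusion along Cantor--Bendixson levels'' names no mechanism forcing the selected points to converge, so the sentence ``should yield the desired $S$'' is where the proof stops being one. (Note also that the groomed-space induction gives only that every neighborhood of $\uG$ contains infinitely many $z_t$, not $z_t\to\uG$ as you assert, which further undercuts the indexing of your thinning.)

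The deeper issue is that your contradiction target is misdirected. You aim to refute the capturing hypothesis by exhibiting $S\to\uG$ almost disjoint from every $P\in\mathcal P$ --- that is, to construct exactly the object whose nonexistence is the hypothesis --- using essentially no algebra beyond translation homogeneity. The paper's proof instead contradicts \emph{sequentiality}: it first builds a single closed scattered absorber $P$ with $S\ain P$ for every $S\to\uG$ and with $\scl(\uG,P)=\scl(P)=\alpha_P$ \emph{minimal}, and then uses the group operation essentially: for every $S\to\uG$ there is $p\in P$ with $S\cdot p\ain P$ (obtained by considering $S\cdot S$); sequences dodging the translated absorbers $P\cdot a^{-1}$ exist by the minimality of $\alpha_P$; and the final witness is a set $B$ of products $e^n_i=s_{m(i)}\cdot d_i$ of elements of carefully chosen sequences $S_n\to\uG$ with elements of the discrete sets $D_n$, arranged so that $\uG\in\cl{B}\setminus B$ (via $V\cdot V\subseteq U$ and the vD-point property) while $B$ contains no convergent sequence at all, which is impossible in a sequential space. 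This multiplicative interplay between convergent sequences and discrete sets has no counterpart in your outline, and without it (or something playing the same role) the passage from your family $\{C_t\}$ to a contradiction does not go through.
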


\begin{proof}
Let ${\mathcal D}=\set D_n:n\in\omega.\subseteq2^\G$ be a collection of
closed discrete subsets of $\G$. For brevity, call a point
$g\in \G$ a {\em\vD-point\/} of ${\mathcal D}$ if for every open $U\ni g$
there is a $D\in{\mathcal D}$ with the property $|U\cap D|=\omega$. The
statement of the lemma is equivalent to claiming that there are
no \vD-points of ${\mathcal D}$. Suppose $g\in \G$ is a \vD-point of ${\mathcal
D}$. By translating each $D\in{\mathcal D}$ if necessary, we may assume
that $g=\uG $.

Let ${\mathcal P}=\set P_n:n\in\omega.$ be a collection of closed scattered
subsets of $\G$ such that for any $S\to\uG $ there is a $P\in{\mathcal P}$
such that $|S\cap P|=\omega$. By requiring ${\mathcal P}$ to be closed
under finite unions we may assume that $S\ain P$.

Pick a family $\set O_n:n\in\omega.$ of open neighborhoods of $\uG $ such
that $\cl{O_{n+1}}\subseteq O_n$ and $\bigcap_{n\in\omega}O_n=\{\uG \}$. Put
$P=\bigcup_{n\in\omega}P_n\cap\cl{O_n}$. One may verify that $P$ is
closed, scattered, and for any $S\to\uG $, $S\ain P$. Changing $P$ if
necessary, we may further require that $\alpha_P=\scl(\uG ,P)$ is the smallest one
among all $P$ with such properties.

Let $P'=\set p\in P\setminus\{\uG \}:\scl(p,P)\geq\alpha_P.$. Note that
$\uG \not\in\cl{P'}$ and we may therefore assume (by taking an
appropriate subset of $P$, if necessary) that
$\scl(\uG ,P)=\scl(P)>\scl(p,P)$ for any $p\in P$ such that $p\not=\uG $. Now

\begin{couup}[series=ipr]

\item\label{one.absorber}$P$ is a closed scattered subset of $\G$ such
that $S\ain P$ for every $S\to\uG $; moreover,
$\alpha_P=\scl(P)=\scl(\uG ,P)>\scl(p, P)$ for any $p\in
P\setminus\{\uG \}$, and $\alpha_P$ is the smallest possible.

\end{couup}

Let $g\in \G$ and consider the family ${\mathcal D}_g=\set
D_n\cap(g\cdot P):n\in\omega.$. Suppose $g$ is not a \vD-point of ${\mathcal D}_g$ for any
$g\in \G$. For each $g_i\in \G$ pick an open neighborhood $U_i\ni g_i$ such
that $U_i\cap(g_i\cdot P)\cap D_n=F_i^n$ is finite for every
$n\in\omega$. Put $D_n'=D_n\setminus\bigcup_{i\leq n}F_i^n$. Then $\uG $ is
a \vD-point of ${\mathcal D}'=\set D_n':n\in\omega.$, therefore,
$\uG \in\cl{\bigcup{\mathcal D}'}$.

Let $S\subseteq\bigcup{\mathcal D}'$ be an infinite sequence such that $S\to
g_i$ for some $g_i\in \G$. Then, by~\ref{one.absorber} $S\ain
U_i\cap(g_i\cdot P)$. Since every $D\in{\mathcal D}'$ is closed discrete,
we may assume that $S=\seq s_n$ is such that $s_n\in D_{m(n)}'$ for some
$m(n)\geq n$. Let $n>i$ be such that $s_n\in U_i\cap(g_i\cdot P)$. Then
$s_n\in F_i^{m(n)}$, $i<n<m(n)$ contradicting $s_n\in
D_{m(n)}'=D_{m(n)}\setminus\bigcup_{i\leq m(n)}F_i^{m(n)}$. Thus no such
$S$ exists  making $\bigcup{\mathcal D}'$ almost disjoint from every
convergent sequence in $\G$ contradicting $\uG \in\cl{\bigcup{\mathcal
D}'}$ and the sequentiality of $\G$.

We may therefore assume that $D\subseteq P$ for every $D\in{\mathcal
D}$ and some $g\in \G$ is a \vD-point of ${\mathcal D}$. Note that $g\in P$,
since $P$ is closed. Let $p\in P$ be a \vD-point of ${\mathcal D}$ such
that $\scl(p,P)$ is the smallest. By picking a neighborhood
$U\ni p$ relatively open in $P$ such that $\scl(x,P)<\scl(p,P)$ for any
$x\in\cl{U}\setminus\{p\}$ and restricting ${\mathcal D}$ to $U$, if
necessary, we may assume that $p$ is the only \vD-point of ${\mathcal
D}$. Using an argument similar to the one in the previous paragraph,
by possibly removing a finite subset from each $D\in{\mathcal D}$ we may
assume that ${\mathbb D}=\bigcup{\mathcal D}\cup\{p\}$ is closed, $p$
is the only nonisolated point of $\mathbb D$, and $p$ is a \vD-point
of ${\mathcal D}$.

Consider the translation ${\mathcal D}''=\set D\cdot p^{-1}:D\in{\mathcal D}.$ of ${\mathcal
D}$. Suppose $\uG \in\cl{\bigcup{\mathcal D}''\setminus P}$. By the closedness of
$\mathbb D$ and property~\ref{one.absorber} the set $\bigcup{\mathcal D}''\setminus P$
contains no infinite converging sequence, contradicting the
sequentiality of $\G$.

\begin{couup}[ipr]\label{vD.space}

\item There exists a countable family $\mathcal D$ of closed discrete
subsets of $\G$ such that $\bigcup{\mathcal D}\subseteq P$, $\uG $ is the only
nonisolated point of $\bigcup{\mathcal D}\cup\{\uG \}$, which is closed in $\G$,
and $\uG $ is a \vD-point of ${\mathcal D}$.

\end{couup}

Suppose there exists an $S\to\uG $ such that $(S\cdot p)\setminus P$ is
infinite for every $p\in P\setminus\{\uG \}$. We may then pick an infinite sequence
$S'\subseteq S\cdot S$ so that $S'\to\uG $ and $S'\subseteq \G\setminus
P$, contradicting~\ref{one.absorber}.
Thus for every sequence $S\to\uG $ there exists a $p\in P\setminus\{\uG \}$
such that $(S\cdot p)\ain P$.

Suppose $A\subseteq\G$ is such that for some ordinal $\beta$,
$\scl(a,P)\leq\beta<\alpha_P$ for every $a\in A\cap P$. Then there
exists a sequence $S\to\uG $ such that $S\setminus\bigcup_{a\in F}(P\cdot
a^{-1})$ is infinite for every $F\in[A]^{<\omega}$.

Indeed, suppose no such $S$ exists and let $A=\set p_n:n\in\omega.$ list all the
points in $A$. For each $n\in\omega$ find a neighborhood $U_n\ni p_n$
relatively open in $P$ so that $\scl(\cl{U_n}\cap P)\leq\beta$ if
$p_n\in P$ and put $P_n'=(\cl{U_n}\cap P)\cdot p_n^{-1}$. If
$p_n\not\in P$, put $P_n'=\varnothing$. Note that ${\mathcal P}'=\set
P_n':n\in\omega.$ is a collection of closed scattered subsets of $\G$
with the property that $\scl(P_n)\leq\beta$ for every $n\in\omega$,
and for every $S\to\uG $ there exists an $F\in[\omega]^{<\omega}$ such that $S\ain
\bigcup_{n\in F}P_n'$.

Repeating the construction used to build $P$ out of $P_n$ at the
beginning of this argument we may construct a closed scattered
$P'\subseteq \G$ such that $\scl(P')\leq\beta$ and $S\ain P'$ for
every $S\to\uG $ contradicting the minimality of $\alpha_P$
in~\ref{one.absorber}.

Let $P\setminus\{\uG \}=\set p_n:n\in\omega.$ list all the points in $P$
other than $\uG $. For each $n\in\omega$ pick an open $O_n\ni\uG $ such that
$\beta=\scl(p_n, P)=\scl(\cl{O_n\cdot p_n}\cap P)<\alpha_P$, $\cl{O_{n+1}}\subseteq O_n$,
and $\bigcap_{n\in\omega}O_n=\{\uG \}$.

Restricting ${\mathcal D}$ to $O_0$ if necessary, assume that $\bigcup{\mathcal
D}\subseteq O_0$. By induction, pick disjoint closed discrete
$D_n'\subseteq\cup{\mathcal D}$ so that $D_n'\subseteq O_n$ and each $D_n$
is covered by finitely many $D_n'$. To see that this is
possible, put $D_n'=(D_n\cap O_n)\cup((O_n\setminus
O_{n+1})\cap(\bigcup{\mathcal D}\setminus\bigcup_{i<n}D_i'))$ and observe that
the intersection inside the second pair of parentheses is a closed and
discrete subspace of $\G$, since
$\uG$ is the only nonisolated point of $\mathbb D$. Put ${\mathcal
D}'=\set D_n':n\in\omega.$. Note that $\uG $ is a \vD-point of ${\mathcal
D}'$. To simplify notation, we will assume that $D_n\subseteq O_n$ in
what follows.

Let $n\in\omega$. By the choice of $O_n$,
$D_n\subseteq O_n\subseteq O_i$, so $D_n\cdot p_i\subseteq O_i\cdot
p_i$, whenever $i\leq n$. Thus there is a $\beta<\alpha_P$ such that
$\scl(a,P)\leq\beta$ for every $a\in
A_n=\bigcup_{i\leq n}D_n\cdot p_i$. Find a sequence $S_n\to\uG $ such
that $S_n\setminus\bigcup_{a\in F}(P\cdot
a^{-1})$ is infinite for every $F\in[A_n]^{<\omega}$. Let $D_n=\set d_i:i\in\omega.$ and
$S_n=\set s_i:i\in\omega.$ be 1-1 listings of $D_n$ and $S_n$.
For each $i\in\omega$ pick an
$m(i)>n$ so that $m(i)$ is strictly increasing, $s_{m(i)}\cdot
d_i\cdot p_j\not\in P$ for every $i\in\omega$ and $j\leq n$, and
$e_i^n=s_{m(i)}\cdot d_i\in O_n$. Note that the latter is possible, since
$S\to\uG$ and $d_i\in D_n\subseteq O_n$. Put $B_n=\set
e_i^n:i\in\omega.$, $B=\bigcup_{n\in\omega}B_n$ and note that each $B_n$ is a
closed and discrete subspace of $\G$.

Now, $\uG \in\cl{B}$. Indeed, let $U\ni\uG $ be any open neighborhood of
$\uG $. Find an open $V\ni\uG $ such that $V\cdot V\subseteq U$. Then $V\cap D_n$
is infinite for some $n\in\omega$, since $\uG $ is a \vD-point of ${\mathcal
D}$. Also, $S_n\ain V$. Thus for some large enough $i\in\omega$ both
$d_i\in V$ and $s_{m(i)}\in V$ showing that $e_i^n\in U$. 

Let $C\subseteq B$ be an infinite sequence such that $C\to g$ for some
$g\in \G$. Since each $B_n$ is closed and discrete,
we may assume that $C=\set e_{i(k)}^{n(k)}:k\in\omega.$ where $n(k)$
is strictly increasing. Since $e_i^n\in O_n$, $C\to\uG $. Thus, there
exists a $j\in\omega$ such that $C\cdot p_j\ain P$. Pick a $k\in\omega$
large enough so that $n(k)>j$ and $e_{i(k)}^{n(k)}\cdot p_j\in P$. At the same time
$e_{i(k)}^{n(k)}\cdot p_j=s_{m(i(k))}\cdot d_{i(k)}\cdot p_j\not\in P$ by the choice
of $s_{m(i)}$, a contradiction.
\end{proof}

\begin{lemma}\label{nwd.pi}
Let $\G$ be a countable nondiscrete sequential group. Suppose ${\mathcal
P}\subseteq\nwd(\G)$ is a countable family such that
for every $S\to\uG $ there exists a $P\in{\mathcal P}$ such that $|S\cap
P|=\omega$. Then $\G$ does not have a countable $\pi$-network at $\uG $
that consists of dense in themselves sets.
\end{lemma}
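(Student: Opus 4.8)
The plan is to argue by contradiction: assume $\mathcal H=\{H_n:n\in\omega\}$ is a countable $\pi$-network at $\uG$ consisting of dense-in-themselves sets, and derive a contradiction with the sequentiality of $\G$. The strategy is to produce \emph{simultaneously} a closed copy of $S(\omega)$ and a closed copy of $D(\omega)$ inside $\G$, and then invoke Proposition~\ref{ts.properties} together with the following elementary fact: in a sequential space a set that is almost disjoint from every convergent sequence is automatically closed. Indeed, if $D$ is such a set then $[D]'=D$, since any convergent $C\subseteq D$ would be an infinite convergent sequence meeting $D=C$ infinitely; iterating, $\cl D=[D]_{\omega_1}=D$. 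Thus $\G$ cannot contain a non-closed set almost disjoint from all convergent sequences, which is exactly what Proposition~\ref{ts.properties}(3) would manufacture from the two closed copies.

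\emph{First prong} ($S(\omega)$): I claim $\G$ is not Fr\'echet. Otherwise $\G$ is a countable Fr\'echet group without isolated points (it is nondiscrete and homogeneous), so Lemma~\ref{nwd-lemma}(1), applied at $\uG$ to the countable family $\mathcal P\subseteq\nwd(\G)$, yields a sequence $C\to\uG$ with $C\cap P$ finite for every $P\in\mathcal P$. This contradicts the hypothesis that $\mathcal P$ captures every sequence converging to $\uG$. Hence $\G$ is a countable sequential non-Fr\'echet group, and Proposition~\ref{ts.properties}(2) gives a closed copy of $S(\omega)$ in $\G$.

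\emph{Second prong} ($D(\omega)$): here is where the dense-in-itself $\pi$-network is used, and this is the step I expect to be the main obstacle. Note first that, since each $H_n$ is nonempty and lies below a neighborhood of $\uG$, we have $\uG\in\cl{\bigcup_n H_n}$: every open $U\ni\uG$ contains some $H_n\neq\varnothing$. The goal is to refine $\bigcup_n H_n$ into a countable family $\mathcal D$ of closed discrete subsets of $\G$ witnessing $\uG$ as a van Douwen point --- i.e.\ with $\bigcup\mathcal D\cup\{\uG\}$ closed, $\uG$ its only non-isolated point, and every neighborhood of $\uG$ meeting some member of $\mathcal D$ infinitely --- which is precisely a closed copy of $D(\omega)$. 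Using that each $H_n$ is crowded one extracts inside it infinite subsets that are closed and discrete in $\G$, placed in a shrinking family of neighborhoods of $\uG$ exactly as in the inductive construction of Lemma~\ref{scattered.vD} (choosing open $O_k$ with $\cl{O_{k+1}}\subseteq O_k$ and discrete pieces $D_k\subseteq O_k$ whose only possible accumulation point is $\uG$). The delicate point is that a crowded $H_n$ need not contain an \emph{infinite closed discrete} subset at all (it may be compact, e.g.\ a Cantor set); in that case $H_n$ has a convergent subsequence, and after translating its limit to $\uG$ one obtains a genuine sequence $C\to\uG$. One then splits into cases: if such a $C$ can be chosen almost disjoint from every $P\in\mathcal P$ we contradict capture directly; otherwise every such sequence is trapped by $\mathcal P$, and the discrete remainders assemble into the required $D(\omega)$.

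Finally, with closed copies of both $S(\omega)$ and $D(\omega)$ in hand, Proposition~\ref{ts.properties}(3) produces a subset $D\subseteq\G$ that is not closed yet almost disjoint from every convergent sequence, contradicting the sequentiality of $\G$ via the fact recorded in the first paragraph. This completes the argument. The whole weight of the proof rests on the second prong, i.e.\ on converting the crowded $\pi$-network into either a convergent sequence dodging $\mathcal P$ or a van Douwen configuration; the bookkeeping there --- ensuring the extracted discrete sets are closed with sole accumulation point $\uG$ --- is the technical heart, and is where I expect the construction of Lemma~\ref{scattered.vD} to be reused almost verbatim.
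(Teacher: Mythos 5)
Your first prong is correct and is exactly how the paper's own proof begins: Lemma~\ref{nwd-lemma}(1) shows $\G$ is not Fr\'echet, hence by Proposition~\ref{ts.properties}(2) it contains a closed copy of $S(\omega)$; and your observation that in a sequential space a set almost disjoint from every convergent sequence is closed is also correct. The gap is in the second prong, and it is twofold. First, the object you propose to build --- countably many closed discrete sets whose union with $\uG$ is closed, $\uG$ the only non-isolated point, every neighborhood of $\uG$ meeting some member infinitely --- is a \vD-configuration, and this is \emph{not} ``precisely a closed copy of $D(\omega)$'': $D(\omega)$ is metrizable, in particular first countable at its unique non-isolated point, whereas a \vD-configuration carries no restriction on the trace of the neighborhood filter at $\uG$. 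Proposition~\ref{ts.properties}(3) is stated for closed copies of $D(\omega)$, so a \vD-configuration does not feed into Banakh--Zdomskyy, and nothing in your sketch addresses the extra first-countability-type requirement that distinguishes $D(\omega)$. Second, even the \vD-configuration is not actually constructed: ``reuse Lemma~\ref{scattered.vD} almost verbatim'' cannot work, since that lemma's construction serves the opposite purpose (it \emph{refutes} \vD-points, and does so under a capturing family of closed \emph{scattered} sets, a hypothesis you do not have here --- your capturing family is nowhere dense), and your fallback dichotomy in the problematic case is vacuous: its second horn (``every such sequence is trapped by $\mathcal P$'') is just the standing hypothesis of the lemma restated, while ``the discrete remainders assemble into the required $D(\omega)$'' is precisely the assertion that needs proof. (Also, in a countable group a crowded set can never be compact --- countable compact Hausdorff spaces are scattered --- so the Cantor set worry is misplaced, though the genuine issue, that a crowded set may lack suitably placed infinite closed discrete subsets, is real.)

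The paper resolves exactly these difficulties, but with the logic running in the opposite direction and with machinery absent from your sketch. It uses Banakh--Zdomskyy \emph{contrapositively}: closed $S(\omega)$ plus sequentiality imply that $\G$ contains \emph{no} closed copy of $D(\omega)$, and this non-existence becomes the working tool. The proof then constructs a single closed nowhere dense set $P$ with $S\ain P$ for every $S\to\uG$, an inductive ``scaffold'' $T=\{t_n\}$ with pairwise disjoint neighborhoods $W(t_n)$, built by induction on the sequential order of points over $\G\setminus P$, and shrinking sets $U_n\subseteq(W(t_n)\setminus P)\cdot t_n^{-1}$. The dichotomy ``either $D_k$ contains a closed discrete set $\{e^k_n\}$ with $e^k_n\in U_n$, or $\{U_n\cap D_k\}$ is a countable base at $\uG$ in the crowded $D_k$, which produces a closed copy of $D(\omega)$'' is then decided by the no-$D(\omega)$ fact; finally the \emph{translated} sets $D^k=\{e^k_n\cdot t_n\}$ are shown, via the scaffold properties, to be closed discrete with $\bigcup_k D^k$ containing no convergent sequence at all while $\uG\in\cl{\bigcup_k D^k}$ --- the contradiction is with sequentiality directly, not a forward application of Banakh--Zdomskyy. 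None of the absorber $P$, the scaffold, or the translation trick appears in your proposal, and these are what make the ``bookkeeping'' you defer actually possible.
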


\begin{proof}
Note that $\G$ is not Fr\'echet by~Lemma~\ref{nwd-lemma}, and thus does
not contain a closed subspace homeomorphic to ${\mathbb D}(\omega)$ by
Proposition~\ref{ts.properties}.

Let ${\mathcal D}=\set D_n:n\in\omega.$ be a $\pi$-network at $\uG $ such
that each $D\in{\mathcal D}$ is dense in itself. By translating each
element of ${\mathcal D}$ if necessary, we may assume that $\uG \in D$ for
every $D\in{\mathcal D}$.

Fix open $O_n\ni\uG $ so that $\cl{O_{n+1}}\subseteq O_n$ and
$\bigcap_{n\in\omega}O_n=\{\uG \}$.

Let ${\mathcal P}=\set P_n:n\in\omega.\subseteq\nwd(\G)$ be such that for
every $S\to\uG $ there exists a $P\in{\mathcal P}$ such that $|S\cap
P|=\omega$. Just as in the proof of Lemma~\ref{scattered.vD} we may
construct a $P\in\nwd(\G)$ such that for every $S\to\uG $, $S\ain P$. By taking the
closure of $P$ if necessary, we may assume that $P$ is closed.

Let $g\in \G$. Define $d(g)=\so(g,\G\setminus P)$. Let
$\alpha_P=d(\uG )$. Note that $\alpha_P>1$ by the choice of $P$.
We proceed to prove the following claim by induction on $\alpha$.

\begin{couup}[ipr]

\item\label{scaffold}Let $p\in P$ and $d(p)=\alpha$ for some
$\alpha<\omega_1$. There exists a $T\subseteq \G\setminus P$ and a
neighborhood assignment $W:T\to\tau(\G)$ such that the following
properties hold:
\begin{couup}[label={\rm(\alph*)},ref={\rm(\arabic{couupi}\alph*)}]
\item\label{scaffold.free}$p\in[T]_\alpha$, if $p'\in\cl{T}$ then
$d(p')=\so(p',T)$;

\item\label{scaffold.stratified}$g\in W(g)\setminus P$ for every $g\in T$,
the $W(g)$ are disjoint; if $s_i\in W(g_i)\setminus P$ is such
that $s_i\to g$ for some $g\in \G$ and all $g_i$ are distinct then
$g_i\to g$;
\end{couup}

\end{couup}

Let $d(p)=\alpha$ for some $p\in P$. If $\alpha=1$ there exists an
infinite sequence of $g_i\in\G\setminus P$ such that $g_i\to p$. Thinning
out the sequence and reindexing, if necessary, pick disjoint open $W(g_i)\ni g_i$ so
that $W(g_i)\subseteq O_i\cdot p$. Put $T=\set
g_i:i\in\omega.$. Properties~\ref{scaffold.free}
and~\ref{scaffold.stratified} are easy to check.

Thus we can assume $\alpha>1$. Let $p^n\to p$ and $\alpha_n<\alpha$
be such that $p^n\in O_n\cdot p$ and 
$p^n\in[\G\setminus P]_{\alpha_n}$ for every $n\in\omega$. Since
$d(p^n)\leq\alpha_n<\alpha$, by the induction hypothesis there exist
$T_n\subseteq \G\setminus P$ and $W_n:T_n\to\tau(\G)$ that
satisfy~\ref{scaffold}. Pick a sequence of open disjoint $V_n\ni p^n$
such that $V_n\subseteq O_n\cdot p$ after thinning out an reindexing
if necessary.
By passing to subsets and reindexing again, if
necessary, we may assume that the $\cl{T_n}$ are disjoint, $T_n\subseteq
O_n\cdot p\cap V_n$, and $W_n(g)\subseteq O_n\cdot p\cap V_n$ for every $n\in\omega$ and
$g\in T_n$. Let $T=\bigcup_{n\in\omega}T_n$ and define $W:T\to\tau(\G)$
by $W(g)=W_n(g)$ whenever $g\in T_n$.

By the choice of $T_n$ and $O_n$,
$\cl{T}=\{p\}\cup\bigcup_{n\in\omega}\cl{T_n}$. If $p'\in\cl{T_n}$ then
$d(p')=\so(p', T_n)=\so(p', T)$ by the inductive hypothesis and the
choice of $T_n$. Since $d(p)=(\sup_n\alpha_n)+1$ and
$d(p^n)\leq\alpha_n$, $d(p)=\so(p,T)$.

Let $s_i\in W(g_i)\setminus P$ for some $g_i\in T$ be such that $s_i\to g$. By
thinning out and reindexing we may assume that either $g_i\in T_n$ for
some fixed $n\in\omega$ or $g_i\in T_{n(i)}\subseteq O_{n(i)}\cdot p$ for some
strictly increasing $n(i)$. In the first case $g_i\to g\in P$ by the
choice of $T_n$. Otherwise $s_i\in W_{n(i)}(g_i)\subseteq
O_{n(i)}\cdot p$ so $s_i\to p$ by the choice of $O_n$ contradicting
$s_i\in \G\setminus P$ and $d(p)=\alpha>1$.

Pick $T$ and $W$ that satisfy~\ref{scaffold} for $p=\uG $ and let $T=\set
t_n:n\in\omega.$ be a 1-1 enumeration of the points of $T$.
Pick $U_n\subseteq (W(t_n)\setminus P)\cdot t_n^{-1}$ so that $U_{n+1}\subseteq U_n$,
$\bigcap_{n\in\omega}U_n=\{\uG \}$, and $\uG\in\cl{\set t_k:U_k\cdot
t_k\subseteq O_n.}$ for every $n\in\omega$. Note that each $U_n\cdot
t_n\subseteq\G\setminus P$. Let
$k\in\omega$ and show that

\begin{couup}[ipr]

\item\label{graded.discrete}there exists a closed discrete subset
$E_k\subseteq D_k$ such that $E_k=\set e_n^k:n\in\omega.$ and
$e_n^k\in U_n$ for every $n\in\omega$;

\end{couup}

If no such $E_k$ exists, the $U_n\cap D_k$ form a countable base
of neighborhoods of $\uG $ in $D_k$. Since $\uG \in
D_k$, and each $D_k$ is dense in itself, this implies the existence of
a closed copy of $\mathbb D(\omega)$ in $\G$ contradicting the
sequentiality of $\G$ as noted at the beginning of this proof.

Consider the set $D^k=\set d^n:d^n=e_n^k\cdot t_n, U_n\cdot
t_n\subseteq O_k, n\in\omega.$. Then $D^k\subseteq O_k\setminus P$ for every
$k\in\omega$ and $d^n\in
W(t_n)$ for every $n\in\omega$ by $e_n^k\in U_n$ and the choice of
$U_n$. Suppose $d^{n(i)}\to d$ for some $d\in
\G$. By~\ref{scaffold.stratified} $t_{n(i)}\to d$ so
$e_{n(i)}^k=d^{n(i)}\cdot t_{n(i)}^{-1}\to\uG $ contradicting the choice of
$e_n^k$. Thus each $D^k\subseteq O_k$ is closed and discrete in $\G$.

Suppose $S\to g$ for some $g\in \G$ is an infinite sequence such that
$S\subseteq\bigcup_{k\in\omega}D^k$. Since each $D^k$ is closed discrete,
we may assume that $S=\set s_n:n\in\omega.$ where $s_n\in D^{k(n)}$
for some strictly increasing $k(n)$. Then $s_n\in O_{k(n)}$ and
$S\to\uG $ contradicting $S\subseteq \G\setminus P$ and the choice of $P$.

Let $U\ni\uG $ be open and find an open $V\ni\uG $ such that $V\cdot V\subseteq
U$. Let $k\in\omega$ be such that $D_k\subseteq V$, and let $t_n$ be
such that $U_n\cdot t_n\subseteq O_k$ and $t_n\in V$.
Then $d^n=e_n^k\cdot t_n\in D^k\cap V\cdot V$. Thus
$\uG \in\cl{\bigcup_{k\in\omega}D^k}$ contradicting the sequentiality of $\G$.
\end{proof}

\begin{lemma}\label{split.cover}
Let $X$ be a sequential space, $S\subseteq X$, and
$x\in\cl{S}$ for some $x\in X$. Let $\I\subseteq2^X$ be a cover of
$S$. Then either there exists an $I\in\I$ such that $x\in\cl{S\cap I}$
or there is a countable $\I^*\subseteq[\I]^\omega$ such that whenever
$\I'\subseteq\I$ is such that $\I'\cap\I''$ is infinite for every $\I''\in\I^*$,
$x\in\cl{S\cap\bigcup\I'}$. 
\end{lemma}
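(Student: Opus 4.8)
The plan is to prove the dichotomy by transfinite induction on the \emph{sequential rank} of $x$ in $S$, namely the least ordinal $\rho(x)$ with $x\in[S]_{\rho(x)}$. Because $X$ is sequential and $x\in\cl S=[S]_{\omega_1}$, this rank is a well-defined countable ordinal; and since $[S]_\lambda=\bigcup_{\beta<\lambda}[S]_\beta$ at every limit $\lambda$, the rank is always either $0$ or a successor. So I would set up the induction with the inductive hypothesis that the statement of the lemma already holds, for the same $X$, $S$ and $\I$, at every point $y\in\cl S$ with $\rho(y)<\rho(x)$.

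For the base case $\rho(x)=0$ one has $x\in S$, and since $\I$ covers $S$ there is an $I\in\I$ containing $x$; then $x\in S\cap I\subseteq\cl{S\cap I}$ and the first alternative holds. For the successor case $\rho(x)=\beta+1$ one has $x\in[[S]_\gamma]'$ for some $\gamma\le\beta$, so I would fix a sequence $\seq y_m$ contained in $[S]_\gamma$ and converging to $x$, noting that each $y_m\in\cl S$ satisfies $\rho(y_m)\le\gamma<\rho(x)$, so the inductive hypothesis applies to every $y_m$. Partition $\omega=M_A\sqcup M_B$, putting $m$ in $M_A$ when the first alternative holds for $y_m$ --- fixing then a witness $I_m\in\I$ with $y_m\in\cl{S\cap I_m}$ --- and $m$ in $M_B$ otherwise, in which case the second alternative supplies a countable $\I^*_m\subseteq[\I]^\omega$. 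At least one of $M_A,M_B$ is infinite. Throughout I would lean on one elementary remark: if infinitely many of the $y_m$ lie in a fixed closed set $F$, then those $y_m$ form a subsequence of $\seq y_m$, hence still converge to $x$, so $x\in F$; I apply this with $F=\cl{S\cap I}$ and with $F=\cl{S\cap\bigcup\I'}$.

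If $M_A$ is infinite, I would first ask whether some single $I\in\I$ has $y_m\in\cl{S\cap I}$ for infinitely many $m$: if so the remark gives $x\in\cl{S\cap I}$ and the first alternative holds for $x$. If not, the assignment $m\mapsto I_m$ is finite-to-one on $M_A$, so $\I^*_A=\{I_m:m\in M_A\}$ is an infinite subfamily of $\I$, and I would set $\I^*=\{\I^*_A\}$. Any $\I'\subseteq\I$ with $\I'\cap\I^*_A$ infinite then contains $I_m$ for infinitely many $m\in M_A$, and for each such $m$ we get $y_m\in\cl{S\cap I_m}\subseteq\cl{S\cap\bigcup\I'}$, so the remark yields $x\in\cl{S\cap\bigcup\I'}$ and the second alternative holds. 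If instead $M_B$ is infinite, I would take $\I^*=\bigcup_{m\in M_B}\I^*_m$, a countable family; whenever $\I'\subseteq\I$ meets every member of $\I^*$ infinitely it meets every member of each $\I^*_m$ infinitely, so $y_m\in\cl{S\cap\bigcup\I'}$ for all $m\in M_B$, and the remark again gives $x\in\cl{S\cap\bigcup\I'}$.

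The main obstacle, and the only place that needs real care, is the bookkeeping in the successor step: I must ensure that the witness produced for $x$ is genuinely of the required combinatorial shape, and that the two mechanisms for capturing the $y_m$ --- forcing a bare cover-member $I_m$ into $\I'$ versus forcing $\I'$ to meet the recursively supplied families $\I^*_m$ --- are not conflated. This is exactly why the argument branches on which of $M_A$, $M_B$ is infinite instead of attempting to amalgamate all witnesses at once, and why the finite-to-one alternative for $m\mapsto I_m$ is needed to turn ``the first alternative holds for infinitely many $y_m$'' into either the first alternative for $x$ or a single infinite subfamily $\I^*_A$ that does the job.
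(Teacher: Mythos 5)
Your proof is correct and takes essentially the same route as the paper's: induction on the sequential rank of $x$ in $S$ (noting it is $0$ or a successor), with the successor step handled by a sequence of lower-rank points converging to $x$, a split according to which alternative the inductive hypothesis provides, pigeonhole (a repeated witness $I$ versus a finite-to-one assignment $m\mapsto I_m$) in the first case, and taking $\I^*=\bigcup_m \I^*_m$ in the second. The only differences from the paper are cosmetic choices in organizing the case analysis.
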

\begin{proof}
The proof proceeds by induction on $\so(x,S)$. The case
$\so(x,S)=0$ is trivial so assume $\so(x,S)=\alpha+1$ and the lemma is
proved for all successor $\beta\leq\alpha$. Pick a sequence
$T\subseteq X$ such that $T\to x$ and $\so(y,S)=\beta_y\leq\alpha$ for
every $y\in T$ and consider the two alternatives that follow from the
inductive hypothesis. 

First, suppose the set $T'=\set y:y\in\cl{I_y\cap S}\text{ for some
}I_y\in\I.$ is infinite. If the family $\I'=\set I_y:y\in T'.$ is finite
then there is an $I\in\I'$ such that $x\in\cl{S\cap I}$. Otherwise put
$\I^*=\{\I'\}$.

Alternatively, assume for every $y\in T$ there is a countable
$\I^*_y\subseteq[\I]^\omega$ such that $y\in\cl{S\cap\bigcup\I'}$ for
any $\I'$ such that $\I'\cap\I''$ is infinite for every
$\I''\in\I^*_y$. Put $\I^*=\bigcup_{y\in T}\I^*_y$.
\end{proof}

\begin{lemma}\label{pretame}
Let $X$ be a countable sequential space, $\I\subseteq2^X$
be an ideal with the following properties: $\I$ contains all
singletons, $\cl{I}\in\I$ for every $I\in\I$,
and whenever $A\in\I^+$ there is a $Y\subseteq A$, $Y\in\I^+$ such
that $\cl{Y\setminus I}=\cl{Y}$ for any $I\in\I$. Then $\I$ is tame.
\end{lemma}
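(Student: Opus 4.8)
The plan is to show directly that $\I$ is tame by unwinding the definition: given $Y \in \I^+$, a function $f : Y \to \omega$, and an $\omega$-hitting ideal $\J$ on $\omega$, I must produce $J \in \J$ with $f^{-1}[J] \in \I^+$. The key structural hypothesis to exploit is that every $\I$-positive set contains an $\I$-positive subset $Y_0$ which is \emph{robustly} positive, in the sense that removing any single $I \in \I$ does not shrink its closure: $\cl{Y_0 \setminus I} = \cl{Y_0}$. The first step is therefore to replace $Y$ by such a $Y_0 \subseteq Y$, so that I may assume $Y$ itself has this absorbing property. Since $Y \in \I^+$ means $\cl{Y}$ is large enough to fall outside $\I$ (using that $\I$ is closed under closures), I can fix a point $x \in \cl{Y}$ witnessing $\cl{Y} \in \I^+$; the goal becomes finding $J \in \J$ with $x \in \cl{f^{-1}[J] \cap Y}$.

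The heart of the argument will be an application of Lemma~\ref{split.cover}, applied to the sequential space $X$, the set $S = Y$, the point $x \in \cl{Y}$, and a cover of $Y$ by the fibers of $f$ — that is, setting $\I_{\mathrm{cov}} = \set f^{-1}(n) \cap Y : n \in \omega.$ (discarding empty fibers). Lemma~\ref{split.cover} gives two alternatives. In the first alternative there is a single fiber $f^{-1}(n) \cap Y$ with $x \in \cl{f^{-1}(n) \cap Y}$; then that fiber is $\I$-positive, and taking $J = \{n\} \in \J$ finishes the proof immediately. In the second alternative I obtain a countable family $\I^* \subseteq [\I_{\mathrm{cov}}]^\omega$ such that whenever $\I' \subseteq \I_{\mathrm{cov}}$ meets every member of $\I^*$ infinitely, then $x \in \cl{Y \cap \bigcup \I'}$. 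Translating each member of $\I^*$ back through $f$ into an infinite subset of $\omega$, I get a countable family of infinite subsets of $\omega$; applying the $\omega$-hitting property of $\J$ produces a single $J \in \J$ whose preimage under $f$ meets every member of $\I^*$ infinitely often. Setting $\I' = \set f^{-1}(n) \cap Y : n \in J.$, the conclusion of Lemma~\ref{split.cover} yields $x \in \cl{Y \cap \bigcup \I'} = \cl{f^{-1}[J] \cap Y}$, so $f^{-1}[J] \in \I^+$, as required.

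I expect the main obstacle to be the careful bookkeeping when passing between the fibers of $f$ as subsets of $Y$ and the corresponding index sets in $\omega$, so that the $\omega$-hitting family fed into $\J$ genuinely captures the combinatorics demanded by $\I^*$. Specifically, each $\I'' \in \I^*$ is a countable set of fibers, and I must encode it as the infinite set $\set n : f^{-1}(n) \cap Y \in \I''.$ in $\omega$; then $J \cap (\text{this set})$ infinite must correspond exactly to $\I' \cap \I''$ infinite when $\I' = \set f^{-1}(n)\cap Y : n \in J.$. One has to ensure $f$ can be taken finite-to-one on the relevant fibers, or otherwise that distinct indices give distinct fibers, so this encoding is a genuine bijection on the index level — removing finitely many points or merging coincident fibers is the kind of routine adjustment needed to make the correspondence clean. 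The second potential subtlety is confirming that the hypothesis $\cl{Y \setminus I} = \cl{Y}$ is actually needed and where: it guarantees that the positivity witness $x$ survives the passage to the relevant subfamilies and that $\bigcup \I'$ remains positive, so I would verify at the end that the absorbing property is exactly what prevents the selected fibers from collectively lying in $\I$.
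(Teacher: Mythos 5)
Your overall skeleton (pass to the kernel $Y$, cover by the fibers of $f$, apply Lemma~\ref{split.cover}, let the $\omega$-hitting ideal choose $J$) matches the paper's, but the proof as written has a fatal gap: you reduce the target statement $f^{-1}[J]\in\I^+$ to the statement $x\in\cl{f^{-1}[J]\cap Y}$ for a \emph{single} point $x$. These are not equivalent, and no choice of a single ``witnessing point'' can make them so: for an ideal satisfying the hypotheses of Lemma~\ref{pretame}, such as $\nwd(\Q)$, the ideal contains plenty of sets clustering at any prescribed point (any sequence converging to $x$, say), so having $x$ in the closure certifies nothing. The same error invalidates your treatment of the first alternative of Lemma~\ref{split.cover}: a fiber with $x\in\cl{f^{-1}(n)\cap Y}$ need not be $\I$-positive. (The correct use of positive fibers is the opposite reduction: if some fiber is $\I$-positive you are done at once with $J=\{n\}$, so you may assume every fiber lies in $\I$ --- after which the first alternative is a dead end rather than a success.) What the paper actually proves is that a single $J$ puts \emph{every} point of $Y$ into $\cl{f^{-1}[J]}$; then $Y\subseteq\cl{f^{-1}[J]}$, together with $Y\in\I^+$ and the hypothesis that $\I$ is closed under closures, forces $f^{-1}[J]\in\I^+$. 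To get one $J$ serving all $y\in Y$, one runs the argument at every $y\in Y$, collects the countably many families $\I^*_y$ (this is where countability of $X$ enters), and applies the $\omega$-hitting property once to the countable family $\{f[\bigcup\I'']:\I''\in\I^*_y,\ y\in Y\}$.

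The second missing idea is that Lemma~\ref{split.cover} cannot be applied to $S=Y$ itself: once all fibers are in $\I$, nothing prevents its first alternative from holding, and then you obtain no family $\I^*$ at all. The paper manufactures a set $S$ for which the first alternative provably fails. Fix $y\in Y$; since singletons are in $\I$ and $X$ is sequential, pick a sequence $y_i\to y$ with $y_i\in\cl{Y}\setminus\{y\}$; then, using the absorbing property $\cl{Y\setminus I}=\cl{Y}$ applied to the finite unions $\bigcup_{j<i}f^{-1}(j)\in\I$, together with regularity, pick $S_i\subseteq Y\setminus\bigcup_{j<i}f^{-1}(j)$ with $y_i\in\cl{S_i}$ and $y\notin\cl{S_i}$. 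For $S=\bigcup_{i}S_i$, each fiber meets $S$ inside a finite union $\bigcup_{j\le i}S_j$, whose closure misses $y$, so the first alternative fails and Lemma~\ref{split.cover} must deliver the family $\I^*_y$. Note that this construction is where the hypothesis $\cl{Y\setminus I}=\cl{Y}$ is actually consumed --- not, as you conjectured, to keep $\bigcup\I'$ positive at the end; positivity at the end comes for free from $Y\subseteq\cl{f^{-1}[J]}$. (Your worry about fiber/index bookkeeping, by contrast, is a non-issue: distinct nonempty fibers are disjoint, hence distinct, so the index correspondence is automatic.)
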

\begin{proof}
Suppose $\I$ is not tame and let $A\in\I^+$, $f:A\to\omega$ witness
this. Using the property of $\I$ from the statement of the lemma, find
$Y\subseteq A$, $Y\in\I^+$ such that $\cl{Y\setminus I}=\cl{Y}$ for any
$I\in\I$. Let $y\in Y$. Since $\I$ contains $\{y\}$ and $X$ is
sequential, there exists a sequence $T\subseteq \cl{Y}\setminus\{y\}$ such
that $T\to y$. Let $T=\seq y_i$, $I_i=f^{-1}(i)\in\I$, and for
every $i\in\omega$ pick a subset $S_i\subseteq
Y\setminus\bigcup_{j<i}I_j$ such that $y_i\in\cl{S_i}\not\ni y$.

Note that $\I_f=\set I_i:i\in\omega.$ is a cover of
$S=\bigcup_{i\in\omega}S_i$ and $y\in\cl{S}$. Since $I_i\cap
S\subseteq\bigcup_{j\leq i}S_i$ and $y\not\in\cl{\bigcup_{j\leq
i}S_i}$, the first alternative of Lemma~\ref{split.cover} fails so
there exists a countable $\I^*_y\subseteq[\I_f]^{\omega}$ such that
$y\in\cl{S\cap\bigcup\I'}$ for any $\I'\subseteq\I$ with the property
that $\I'\cap\I''$ is infinite for every $\I''\in\I^*_y$.

Let ${\mathcal J}=\set f[\bigcup\I'']:\I''\in\I^*_y,
y\in Y.\subseteq2^\omega$. Since $f[\restr{\I}{A}]$ is
$\omega$-hitting, there exists a $J\in f[\restr{\I}{A}]$ such that
$J\cap J'$ is infinite for every $J'\in{\mathcal J}$. Let $\I'=\set
f^{-1}(n):n\in J.$. Then $\I'\cap\I''$ is infinite for every
$\I''\in\I^*_y$ so $y\in\cl{f^{-1}[J]}$ for every $y\in Y$. Thus
$Y\subseteq\cl{f^{-1}[J]}\in\I$ contradicting $Y\in\I^+$.
\end{proof}

\begin{lemma}\label{tame}
Let $\G$ be a countable nondiscrete sequential group. Then each of
$\nwd(\G)$, $\cpt(\G)$, and $\csct(\G)$ is tame.
\end{lemma}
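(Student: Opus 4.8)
The plan is to apply the verification criterion for tameness established in Lemma~\ref{pretame}. That lemma reduces tameness of an ideal $\I$ on a countable sequential space to three purely topological/combinatorial conditions: that $\I$ contains all singletons, that $\I$ is closed under topological closure (i.e. $\cl{I}\in\I$ for every $I\in\I$), and the key "spreading" condition that every $\I$-positive set $A$ contains an $\I$-positive $Y\subseteq A$ with $\cl{Y\setminus I}=\cl{Y}$ for every $I\in\I$. So the entire proof becomes three verifications, one for each of $\nwd(\G)$, $\cpt(\G)$, and $\csct(\G)$, and in each case the main task is to produce the required $Y$.

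First I would dispatch the two easy hypotheses uniformly. All three ideals contain finite sets by our standing convention, so they contain singletons. Closure under $\cl{\cdot}$ is also immediate from how the ideals are defined: $\nwd(\G)$ is closed under closure since the closure of a nowhere dense set is nowhere dense; $\cpt(\G)$ and $\csct(\G)$ are generated by closed (respectively closed scattered) sets, and the closure of a finite union of compact (scattered) sets is again compact (scattered), using that $\G$ is a $T_1$ space and scatteredness is inherited by closed sets in this setting. Thus only the spreading condition requires genuine work.

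For the spreading condition I expect to treat the three ideals by a common strategy but with ideal-specific input. Given $A\in\I^+$, I want $Y\subseteq A$, still $\I$-positive, such that deleting any single member $I\in\I$ does not shrink the closure of $Y$. For $\nwd(\G)$: if $A$ is somewhere dense, pass to $Y=A\cap U$ for a suitable open $U$ in which $A$ is dense; then for any nowhere dense $I$, the set $Y\setminus I$ is still dense in $U$, so $\cl{Y\setminus I}\supseteq\cl U=\cl Y$. For $\cpt(\G)$ and $\csct(\G)$ the obstruction to this naive argument is that an $\I$-positive set need not be somewhere dense, so one cannot simply localize to an open set; instead one must use that a set failing to be covered by countably many compact (scattered) pieces must have "many" accumulation points not absorbed by any single such piece, and carve out $Y$ so that removing one compact (scattered) set $I$ leaves a cofinal supply of points converging to the same limits. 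This is exactly the kind of argument carried out in Lemma~\ref{scattered.vD} and Lemma~\ref{nwd.pi}, where sequences converging to $\uG$ are shown to be almost contained in a single scattered or nowhere dense absorber; I would reuse that machinery to guarantee that the points witnessing membership in $\cl{Y}$ cannot all be swallowed by one element of the ideal.

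The hard part will be the spreading condition for $\csct(\G)$ and $\cpt(\G)$, since here one cannot reduce to the local density argument available for $\nwd(\G)$ and must instead control the Cantor--Bendixson stratification of closed scattered (or compact) sets. The delicate point is ensuring that $Y\in\I^+$ survives the removal of an \emph{arbitrary} $I\in\I$ while keeping the full closure, which forces one to argue about all countably many possible absorbers simultaneously rather than one at a time; I anticipate invoking sequentiality of $\G$ together with the group structure (translating witnessing convergent sequences, as in the proofs of Lemmas~\ref{scattered.vD} and~\ref{nwd.pi}) to build the required $Y$ by a diagonal/levelwise construction so that no single compact or closed scattered set meets $Y$ on a closure-cofinal portion.
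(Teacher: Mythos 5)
Your reduction of the easy hypotheses and your treatment of $\nwd(\G)$ are correct and coincide with the paper: your $Y=A\cap U$ (with $A$ dense in $U$) is exactly the paper's $Y=\Int(\cl{A})\cap A$. The gap is that the $\cpt(\G)$ and $\csct(\G)$ cases are never actually proved --- what you give is a plan --- and the plan is provably unworkable for $\cpt(\G)$, because Lemma~\ref{pretame} does not apply to that ideal. Indeed, $\G$ is countable, $T_1$, nondiscrete and homogeneous, hence infinite and not countably compact, so it contains an infinite closed discrete set $D$. Then $D\in\cpt(\G)^+$, but every $Y\subseteq D$ with $Y\in\cpt(\G)^+$ is an infinite closed discrete set, and for any $y\in Y$ we have $\{y\}\in\cpt(\G)$ while $\cl{Y\setminus\{y\}}=Y\setminus\{y\}\neq\cl{Y}$: no subset of $D$ has the spreading property, since $D$ has no accumulation points whatsoever (its positivity comes from non-compactness, not from accumulation, so there is no ``cofinal supply of points converging to the same limits'' to preserve). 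This is precisely why the paper proves tameness of $\cpt(\G)$ \emph{directly} and runs Lemma~\ref{pretame} only for $\nwd(\G)$ and $\csct(\G)$; your commitment to ``three verifications'' of the pretame hypotheses cannot be carried out. (Relatedly, your reading of positivity as ``failing to be covered by countably many compact (scattered) pieces'' is off: these ideals are generated under \emph{finite} unions, and in a countable space every set is a countable union of singletons.)

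For $\csct(\G)$ you produce no $Y$ at all, and the paper's construction is not the diagonal/levelwise argument you anticipate: it simply takes $Y$ to be the full Cantor--Bendixson derivative (perfect kernel) of $A$. Once $Y$ is dense in itself, spreading is immediate, since any $I\in\csct(\G)$ lies in a closed scattered set, which cannot contain a nonempty relatively open (hence dense-in-itself) piece of $Y$; thus $Y\setminus I$ is dense in $Y$ and $\cl{Y\setminus I}=\cl{Y}$. Lemmas~\ref{scattered.vD} and~\ref{nwd.pi}, which you propose to ``reuse,'' play no role here: in the paper they are deployed later, in Lemma~\ref{iaa.chain}, to refute the alternatives of \IIA, not to establish tameness. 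Finally, note that the configuration your deferred ``hard part'' would actually have to confront is a $\csct$-positive set that is itself scattered --- e.g.\ a discrete set in $\G\cong\Q$ accumulating exactly on a copy of the rational points of the Cantor set; such a set has empty perfect kernel, and every point of every subset of it is isolated in that subset's closure, so no subset satisfies the spreading condition and the case falls outside the reach of Lemma~\ref{pretame} altogether, requiring a direct argument in the spirit of the $\cpt(\G)$ case. So the hard part you postponed is genuinely hard, and neither your strategy nor any purely pretame-based localization can finish it.
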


\begin{proof}
For $\cpt(\G)$ the statement can be proved directly. For $\nwd(\G)$
and $\csct(\G)$ it is sufficient to establish the properties listed in
Lemma~\ref{pretame}. In the case of $\nwd(\G)$ one may pick
$Y=\Int(\cl{A})\cap A$, while for $\csct(\G)$ the choice of the full
Cantor-Bendixson derivative of $A$ as $Y$ satisfies the conditions of
Lemma~\ref{pretame}. 
\end{proof}

\begin{lemma}\label{iaa.chain}
Let $\G$ be a countable, sequential non metrizable, non $\kw$
group. Then one of the $\nwd(\G)$, $\cpt(\G)$, or $\csct(\G)$ is a tame
invariant ideal that satisfies neither~\ref{seq.capture}
nor~\ref{almost.pi} of the \IIA.
\end{lemma}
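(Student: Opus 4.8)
The plan is to produce the witnessing ideal among $\nwd(\G)$ and $\csct(\G)$ (so $\cpt(\G)$ will not even be needed). Note first that $\G$ is nondiscrete, since a discrete countable group is $\kw$. All three ideals are invariant and, by Lemma~\ref{tame}, tame, so the only task is to locate one for which \emph{both} alternatives of \IIA\ fail. I would start from two reductions. First, since $\pi$-weight and weight coincide in groups, if alternative~\ref{almost.pi} holds for $\nwd(\G)$ then, exactly as in the proof of Theorem~\ref{just.Frechet} (but choosing by regularity an open $W\ni\uG$ with $\cl{W}\subseteq U$ before invoking~\ref{almost.pi}), the interiors of the closures of the members of the almost $\pi$-network form a countable $\pi$-base, so $\G$ is metrizable; as $\G$ is not metrizable, \emph{alternative~\ref{almost.pi} fails for} $\nwd(\G)$. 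Second, I claim that if alternative~\ref{seq.capture} holds for $\csct(\G)$ then $\G$ is $\kw$, so that \emph{alternative~\ref{seq.capture} fails for} $\csct(\G)$.

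For the second reduction the easy half is that a countable family $\K$ of \emph{compact} sets, closed under finite unions and containing the singletons, which meets every convergent sequence infinitely, witnesses that $\G$ is $\kw$: if $A\cap K$ is relatively closed for all $K\in\K$ and $S\subseteq A$ converges to $x$, then some $S\cap K$ is infinite, whence $x\in K$ and $S\cap K\subseteq A\cap K$ force $x\in A$, so $A$ is sequentially closed. Thus it suffices to \emph{upgrade} a scattered capturing family to a compact one. Given a capturing family $\K=\set P_n:n\in\omega.$ of closed scattered sets, Lemma~\ref{scattered.vD} applies and shows that for every countable family of closed discrete sets each point has a neighbourhood meeting every member finitely; in particular no $P_n$ (which is closed, hence sequential, scattered and countable) contains a closed copy of $D(\omega)$. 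A countable sequential scattered space without a closed copy of $D(\omega)$ is $\kw$, so each $P_n$ is the direct limit of countably many of its compact subsets $Q^n_m$; since a convergent sequence captured by $P_n$, together with its limit, is compact in $P_n$ and hence lies in one $Q^n_m$, the countable compact family $\set Q^n_m:n,m\in\omega.$ still meets every convergent sequence infinitely, and the previous sentence finishes the claim.

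The remaining argument is a dichotomy on alternative~\ref{seq.capture} for $\nwd(\G)$. If it fails, then both alternatives fail for $\nwd(\G)$ and we are done. If it holds, I claim both alternatives fail for $\csct(\G)$: alternative~\ref{seq.capture} fails by the second reduction, and alternative~\ref{almost.pi} fails as well. Indeed, suppose $\H\subseteq\csct^+(\G)$ were a countable almost $\pi$-network. For $H\in\H$ the closure $\cl{H}$ is non-scattered, so its Cantor--Bendixson kernel $K_H$ is a nonempty dense-in-itself closed set. Given a neighbourhood $U$ of $\uG$, pick an open $W\ni\uG$ with $\cl{W}\subseteq U$ and an $H\in\H$ with $H\setminus W\in\csct(\G)$; as $\cl{H\setminus W}$ is closed and scattered it is nowhere dense in the crowded set $K_H$, so the dense subset $K_H\setminus\cl{H\setminus W}\subseteq\cl{W}$ forces $K_H\subseteq\cl{W}\subseteq U$. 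Hence $\set K_H:H\in\H.$ is a countable $\pi$-network at $\uG$ consisting of dense-in-themselves sets, contradicting Lemma~\ref{nwd.pi}, whose hypothesis is exactly that alternative~\ref{seq.capture} holds for $\nwd(\G)$. Thus $\csct(\G)$ is the required ideal.

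The main obstacle is the upgrade in the second reduction, namely the assertion that a closed scattered subspace of $\G$ with no closed copy of $D(\omega)$ is $\kw$. This is where Lemma~\ref{scattered.vD} does the real work: I would prove it by induction on the scattered height of $P_n$, at each level using the absence of a point every neighbourhood of which meets some member of a prescribed countable family of closed discrete sets infinitely (equivalently, of a closed $D(\omega)$) to assemble the compact subsets $Q^n_m$ realizing $P_n$ as a $\kw$-space. Everything else (the $\pi$-base computation, the direct-limit characterization of $\kw$, and the $\pi$-network contradiction) is routine.
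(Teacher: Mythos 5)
Your outline agrees with the paper up to a point: tameness and invariance via Lemma~\ref{tame}, the fact that $\nwd(\G)$ can never satisfy alternative~\ref{almost.pi} for a non-metrizable group, and the step ``if $\nwd(\G)$ satisfies~\ref{seq.capture} then $\csct(\G)$ fails~\ref{almost.pi}'' via Cantor--Bendixson kernels and Lemma~\ref{nwd.pi} are exactly the paper's moves. The gap is your ``second reduction'': the claim that if $\csct(\G)$ satisfies~\ref{seq.capture} then $\G$ is $\kw$, which you base on the assertion that \emph{a countable sequential scattered space with no closed copy of $D(\omega)$ (equivalently, with no \vD-points) is $\kw$}. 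That assertion is not a theorem of ZFC; it is consistently false, and since Lemma~\ref{iaa.chain} is a ZFC lemma (it is the engine that converts a hypothetical bad group into a counterexample to \IIA\ inside the consistency argument), a consistently false ingredient cannot be used. Concretely, assume $\mathfrak{p}=\mathfrak{t}>\omega_1$ (e.g.\ MA${}+\neg$CH) and let $\{\,T_\alpha:\alpha<\omega_1\,\}$ be a strictly $\subseteq^*$-decreasing tower. Let $X=\omega\cup\{l\}$, where points of $\omega$ are isolated and neighborhoods of $l$ are generated by the $T_\alpha$ (mod finite). Then $X$ is countable, scattered of height $2$, and Fr\'echet: if $A$ meets every $T_\alpha$ infinitely, the chain $\{\,A\cap T_\alpha:\alpha<\omega_1\,\}$ has an infinite pseudo-intersection because $\mathfrak{t}>\omega_1$. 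It has no \vD-points, since the tower filter is a P-filter (given closed discrete $D_m$, each $T_{\alpha_m}\cap D_m$ is finite; take $\alpha=\sup_m\alpha_m$), hence no closed copy of $D(\omega)$. Yet $X$ is not $\kw$: compact subsets of $X$ are exactly finite sets and convergent sequences with their limit, and given countably many convergent sequences $S_m$ (infinite pseudo-intersections of the tower), the family $\{T_\alpha\}\cup\{\omega\setminus S_m\}$ has the strong finite intersection property (each $T_\alpha\setminus T_{\alpha+1}$ is infinite and meets every $S_m$ finitely), so $\mathfrak{p}>\omega_1$ yields a convergent sequence almost disjoint from every $S_m$. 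So the absence of \vD-points, which is all your induction on scattered height has to work with, cannot force $\kw$-ness; also the parenthetical equivalence between ``no \vD-points'' and ``no closed $D(\omega)$'' is unjustified in general spaces (only one direction is easy).

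This is precisely why the paper keeps $\cpt(\G)$ as a third ideal and never attempts your upgrade from scattered capturing to compact capturing. In the paper's chain, when $\csct(\G)$ satisfies~\ref{seq.capture}, Lemma~\ref{scattered.vD} (whose proof uses the group structure heavily, not just the topology of the scattered pieces) is used to rule out alternative~\ref{almost.pi} for $\cpt(\G)$: a putative countable almost $\pi$-network of non-compact closed sets yields a countable family of infinite closed discrete sets with a \vD-point, a contradiction. Then alternative~\ref{seq.capture} for $\cpt(\G)$ fails simply because $\G$ is not $\kw$, so $\cpt(\G)$ is the witness. Without your claim, your dichotomy cannot close: in the case where both $\nwd(\G)$ and $\csct(\G)$ satisfy~\ref{seq.capture} you are left with no witness, and the only known way out is the one the paper takes through $\cpt(\G)$. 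If you want to salvage your two-ideal plan, you would have to prove your scattered-implies-$\kw$ upgrade \emph{using} the ambient group (the pure topological statement being consistently false), and it is not clear this can be done.
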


\begin{proof}
That each of the ideals is tame follows from Lemma~\ref{tame}. The
invariance is trivial.

Since $\nwd(\G)$ never satisfies~\ref{almost.pi} for a nonmetrizable
$\G$ (see~\cite{hrusak-ramos-malykhin}, Proposition~5.2), we may assume that $\nwd(\G)$
satisfies~\ref{seq.capture}. Suppose $\csct(\G)$
satisfies~\ref{almost.pi}. Then there exists a countable ${\mathcal
  D}\subseteq\csct^+(\G)$ such that for any nonempty open $U\subseteq \G$
there exists a $D\in{\mathcal D}$ with the property $D\setminus
U\in\csct(\G)$. By replacing each $D$ with a full
Cantor-Bendixson derivative of itself we may assume that each $D$ is
dense in itself. Applying Lemma~\ref{nwd.pi} we arrive at a
contradiction. Thus $\csct(\G)$ does not satisfy~\ref{almost.pi}.

Suppose $\csct(\G)$ satisfies~\ref{seq.capture}. If $\cpt(\G)$
satisfies~\ref{almost.pi} there exists a countable family ${\mathcal
  D}$ of closed, noncompact subsets of $\G$ such that for any open
$U\subseteq \G$ there exists a $D\in{\mathcal D}$ such that $D\setminus
U$ is compact. By picking an infinite closed discrete subset in each
$D\in{\mathcal D}$ and applying Lemma~\ref{scattered.vD} we arrive at
a contradiction. Thus either $\csct(\G)$ does not
satisfy~\ref{seq.capture} or $\cpt(\G)$ does not
satisfy~\ref{almost.pi}.

Since $\G$ is not $\kw$, $\cpt(\G)$ cannot satisfy~\ref{seq.capture}.
\end{proof}

This concludes the proof of Theorem~\ref{ctble-sequential}. The result
has the following corollary, which illuminates the behavior of
sequential groups under taking products (part~(\ref{Fr.prod}) is an
obvious corollary of Theorem~\ref{just.Frechet} and has been included for completeness):

\begin{corollary}\label{products}
  Assume \IIA.
\begin{enumerate} 
    \item\label{Fr.prod} The product of at most countably many separable Fr\'echet
      groups is Fr\'echet, and
    \item The product of finitely many countable sequential groups
      which are either discrete or not Fr\'echet is sequential.
\end{enumerate}
\end{corollary}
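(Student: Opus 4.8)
The plan is to derive both parts directly from the two structural theorems already proved under \IIA---Theorem~\ref{just.Frechet} (every separable Fr\'echet group is metrizable) and Theorem~\ref{ctble-sequential} (every countable sequential group is metrizable or \kw)---together with the two standing facts recorded after Definition~\ref{kw.def}: that every countable \kw-space is sequential, and that the class of \kw-spaces is finitely productive. Throughout I use that the underlying space of a product of topological groups is the topological product of the underlying spaces, so no categorical subtlety intervenes.

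Part~\ref{Fr.prod} is essentially immediate. By Theorem~\ref{just.Frechet} each factor is separable metrizable, and a product of at most countably many separable metrizable spaces is again separable metrizable, hence first countable and therefore Fr\'echet. This is the ``obvious corollary'' already flagged in the statement.

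For part~(2) the first and main step I would carry out is a reduction showing that \emph{every} factor is in fact a \kw-group. Let $\G$ be one of the factors; by hypothesis $\G$ is a countable sequential group that is either discrete or not Fr\'echet. If $\G$ is discrete, then taking the family of all singletons (equivalently, all finite subsets) as the witnessing family shows at once that $\G$ is \kw, since $U\cap\{x\}$ is trivially relatively open for every $x$, so the \kw-condition reduces to ``every set is open''. If instead $\G$ is not Fr\'echet, then Theorem~\ref{ctble-sequential} leaves only the two options metrizable or \kw; but a metrizable group is first countable, hence Fr\'echet, so the non-Fr\'echet hypothesis excludes metrizability and forces $\G$ to be \kw. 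Once all factors are known to be \kw-groups, I would finish by invoking finite productivity of the \kw\ property: the finite product of \kw-spaces is \kw, and since a finite product of countable groups is again a countable group, the product is a countable \kw-space, hence sequential.

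The only step that requires real attention, rather than bookkeeping, is this dichotomy reduction, and it is also where the precise form of the hypothesis earns its keep. The clause ``discrete or not Fr\'echet'' is engineered exactly to exclude the non-discrete metrizable case: a factor such as a copy of $\Q$ carrying its usual topology is metrizable---hence Fr\'echet---yet is $\sigma$-compact without being locally compact, and so fails to be \kw; products of such a factor with a genuine \kw-group need not be sequential. It is precisely the ability to rule out these factors, afforded by Theorem~\ref{ctble-sequential}, that lets the good closure behavior of \kw-spaces take over and yield sequentiality of the product.
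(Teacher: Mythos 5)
Your proof is correct, and part (1) matches the paper exactly. For part (2), however, your route differs from the paper's in an instructive way. The paper argues at the level of homeomorphism types: using the classification that follows Theorem~\ref{ctble-sequential} (every countable sequential group is homeomorphic to exactly one $\Q_{\omega^\alpha}$, which rests on Zelenyuk's theorem), it reduces the corollary to the product facts $\Q_{\omega^\alpha}\times\Q_{\omega^\beta}\simeq\Q_{\omega^\beta}$ for $\alpha<\beta<\omega_1$ and $\Q_0\times\Q_0\simeq\Q_0$, so that the product of admissible factors is again a space of this form, in particular sequential. Your derivation---discrete or non-Fr\'echet forces each factor to be \kw\ by the dichotomy of Theorem~\ref{ctble-sequential}, then finite productivity of \kw-spaces and sequentiality of countable \kw-spaces finish the job---uses only the two facts recorded after Definition~\ref{kw.def} and avoids Zelenyuk's classification entirely, so it is more elementary and self-contained. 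What the paper's version buys is the stronger conclusion identifying the homeomorphism type of the product, and its list also records that $\Q_{\omega^\alpha}\times\Q_{\omega_1}$ is not sequential for $0<\alpha<\omega_1$, which documents that the ``discrete or not Fr\'echet'' hypothesis is sharp---exactly the point you make informally in your closing paragraph.

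One small correction to that closing paragraph: failure of local compactness by itself does not preclude being \kw\ (the sequential fan $S(\omega)$ and the Arens space $S_2$ are \kw\ but not locally compact). The correct justification that $\Q$ with its usual topology is not \kw\ is that a metrizable (indeed, first countable) \kw-space must be locally compact; it is metrizability together with non-local-compactness that rules it out. This does not affect your proof of the corollary, since that remark is only motivational.
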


\begin{proof}
It suffices to note that
\begin{enumerate}
\item $\Q_{\omega^\alpha}\times\Q_{\omega^\beta}\simeq
  \Q_{\omega^\beta}$ if $\alpha<\beta<\omega_1$,
\item $\Q_{\omega^\alpha}\times\Q_{\omega_1}$ is not
  sequential if $0<\alpha<\omega_1$,
\item $\Q_{0}\times\Q_{0}\simeq \Q_{0}$, and
\item $\Q_{0}\times\Q_{\omega_1}\simeq
  \Q_{\omega_1}\times\Q_{\omega_1}\simeq\Q_{\omega_1}$.
\end{enumerate}
\end{proof}

We do not know at the moment whether it is consistent that the product
of two sequential groups which are not Fr\'echet is sequential
(independently of their cardinality).

\section{Examples, concluding remarks, and open questions}
The example below can probably be constructed using the techniques
of~\cite{tkachenko87} but we chose to provide a direct proof. An
appeal to~\cite{tkachenko87} would require a proof of the normality
of finite powers of $\gamma\N$ spaces, as well as an
adaptation of the free topological group arguments
from~\cite{tkachenko87} to the free boolean group construction used
here.

The nontrivial case of the proof below assumes ${\bf t}=\omega_1$,
however, the statement of the example is meant to emphasize the
fact that one of the two `pathologies' exists in every model of ZFC:
either there is a separable nonmetrizable Fr\'echet group or a
(possibly uncountable) sequential group that is not $\kw$. The authors
do not know whether any separable locally compact first countable countably
compact non compact space may be used in place of $\gamma\N$. 

\begin{example}\label{cw.not.kw}
  If there is no separable nonmetrizable Fr\'echet group then
  there exists a separable sequential $c_\omega$ group $\G$ that is not $\kw$.
\end{example}
\begin{proof}
Since ${\bf t}>\omega_1$ implies the existence of a separable
nonmetrizable Fr\'echet group (see, for
example~\cite{hrusak-ramos-precompact}) we may assume that ${\bf
t}=\omega_1$.
 
Let $X=D\cup\omega_1$ be a countably compact $\gamma\N$ space
(see~\cite{nyikos90}, Example~2.2)
where $D$ is the set of isolated points, disjoint from $\omega_1$,
which has the usual topology.  Let $X\cup\{\infty\}$
be the one-point compactification of $X$, which is also a subspace of some
boolean group $H$ that is (algebraically) generated by
$X\cup\{\infty\}$. We shall assume that $X\cup\{\infty\}$ is
linearly independent over $H$ (in particular this means $0_H\not\in
X\cup\{\infty\}$). The free boolean group over 
$X\cup\{\infty\}$ (see \cite{sipacheva-bool}) would have all the desired
properties (in fact it is not difficult to show that any group
satisfying the properties above is naturally isomorphic to the free
boolean group over $X\cup\{\infty\}$). Below we use the convention
that the elements of such a group are finite sets of elements of
$X\cup\{\infty\}$ with the symmetric difference as the group
operation.

Let $\G$ be the subgroup of $H$ generated by $X$ and let $A\subseteq \G$
be such that $\zG\in\cl{A}\setminus A$ (here the closure is taken in
the topology induced by $H$). We must show that there exists a
sequence $S\subseteq A$ such that $S\to x\in \G\setminus A$. Suppose no
such sequence exists.

Let $n\in\omega$ be the smallest number such that 
$\zG\in\cl{A\cap\sum^nX}$. Note that such an $n$ exists by the
definition of the topology on $H$. We will assume that
$A\subseteq\sum^nX$. By the minimality of $n$ (truncating $A$ if
necessary) we may assume that $|a|=n$ for every $a\in A$. Write an
arbitrary $a\in A$ as $a=d+w$ where $d\in\la{D}$ and
$w\in\la{\omega_1}$ and put $\delta(a)=|d|$. Note that for
every $k\leq n$ the set $A_k=\set a\in A:\delta(a)\leq k.$ is a
sequentially closed subset of $A$.

Let $k\leq n$ be the smallest such that $\zG\in\cl{A_k}$. Replacing $A$
with $A_k$ and using the minimality of $k$ we may assume (again
truncating $A$ if necessary) that $\delta(a)=k$ for every $a\in
A$. Let $D_A=\bigcup\set t\in D:t\in a\in A.$
and suppose $D_A$ is infinite. Note that $\sum^nX$ is sequentially
compact. Using this and the property of $D_A$ we may pick a convergent
sequence $S\subseteq A$ such that $S\to x$ for some $x\in\sum^nX$ and
for each $s\in S$ there is a $t_s\in s\cap D$ such that $\set t_s:s\in
S.\to\alpha\in\omega_1$. Then $\delta(x)<k$ so $x\not\in A$.

Thus we may assume that $D_A$ is finite. Note that this implies that
$D_A$ is empty (otherwise $\zG\not\in\cl{A}$). Therefore
$A\in\sum^n\omega_1$. Define $l\leq n$ to be the largest with the
following property: for any $\alpha<\omega_1$ there exists an $a\in A$
such that $|a\setminus\alpha|\geq l$. Note that we may
assume that $l\geq1$, otherwise $A$ is countable with a metrizable
closure.

Suppose $l\geq2$. Recursively pick a sequence $a_i\in A$ such that for
some distinct $\alpha_i,\beta_i\in a_i$,
$\alpha_{i+1},\beta_{i+1}>\max\{\alpha_i,\beta_i\}$. By passing to a
subsequence if necessary we may assume that $a_i\to a\in\sum^n\omega_1$. Since
$\alpha_i,\beta_i\to\gamma$ for some $\gamma\in\omega_1$, $|a|<n$,
showing that $a\not\in A$. We may thus assume that $l=1$.

This implies the existence of an $\alpha\in\omega_1$ such that every
$a\in A$ can be written as $a=\{\beta_a\}+b_a$ where $\beta_a>\alpha+1$ and
$b_a\in[\alpha+1]^{n-1}$. If $n>1$, the set $A'=\set b_a:a\in A.$ is a
sequentially closed (and therefore compact) subset of $H$ such that $\zG\not\in A'$ (otherwise
$A\cap\omega_1\not=\varnothing$ contradicting the choice of
$A$ and $n>1$). Now $U=H\setminus
(A'+(\set\beta:\beta>\alpha+1.\cup\{\infty\}))$ is an open
neighborhood of $\zG$ such that $U\cap A=\varnothing$ contradicting the
choice of $A$. Hence $n=1$ implying $\zG\not\in\cl{A}$, a contradiction.
\end{proof}

The statement of \IIA\ given at the beginning of this paper may appear
somewhat technical in that it lists several restrictions on both the
space (groomed), as well as the ideal (tame, invariant, weakly
closed). This complexity may be significantly reduced in most
applications, however. Most natural ideals (including all used in this
paper) in sequential spaces are generated by their (sequentially)
closed members while tameness can be replaced by the topological condition
defined in Lemma~\ref{pretame}, namely the existence of a `kernel' in
each positive set. One may prefer a weaker version of \IIA\ that
states that for every invariant ideal generated by sequentially closed
sets for which the conditions in Lemma~\ref{pretame} are satisfied, one of the
two alternatives in the statement of \IIA\ holds.

Limiting the class of spaces may also make applications of \IIA\ more
transparent. Call the following satement the \emph{Unrestricted Ideal Axiom}
or \UIA:

\smallskip
\noindent\UIA: {\it For every space $X$ in some class $\mathcal P$ and
  every ideal $\mathcal I\subseteq 2^X$ one of the
  following holds for every $x\in X$:

\begin{enumerate}
    \item there is a countable $\mathcal S\subseteq
      \mathcal I$ such that for every infinite sequence $C$ convergent to $x$
      in $X$ there is an $I\in \mathcal S$ such that $C\cap I$
      is infinite,
    
    \smallskip
    
    \item there is a countable $\mathcal H\subseteq
      \mathcal I^+$ such that for every non-empty open $U\subseteq
      X$, $x\in U$ there is an $H\in \mathcal H$ such that $H\setminus
      U\in \mathcal I$.
\end{enumerate}}
It is not dificult to see that any countable space that is either $\kw$ or first countable
satisfies \UIA. Theorem~\ref{ctble-sequential} shows \IIA\ implies
\UIA\ holds for the class of all countable sequential groups
(note that there are no restrictions on the ideal whatsoever, not even
invariance). The authors do not know at the moment if \UIA\ for all \emph{groomed}
groups is implied by \IIA\ or even whether it is consistent. There are
countable Fr\'echet spaces for which \UIA\ fails in ZFC with the ideal
of the nowhere dense subsets as the witness (see~\cite{dow-pi.frechet}).

To shed some light on the topology of groomed spaces the following more
detailed treatment of the concept of a \vD-point from Lemma~\ref{scattered.vD} may be helpful.

\begin{definition}
  Let $X$ be a topological space. Let $\D$ be a countable family of
  infinite closed discrete subspaces of $X$. We call $\D$ a
  \emph{(strict) \vD-network} at $x\in X$ if for every open $U\ni x$ there
  is a $D\in\D$ such that $D\cap U$ is inifinite ($D\ain U$).
\end{definition}

If $\D$ is a (strict) \vD-network at $x$ we will refer to the space
$\bigcup\D\cup\{x\}$ as a \emph{(strict) \vD-subspace of $X$} and the point $x$
as a \emph{(strict) \vD-point of $\D$ in $X$}.

Now the lemma below offers a topological descripton of groomed spaces.
We omit an elementary proof.

\begin{lemma}\label{groomed}
A countable topological space $X$ is groomed if and only if for every dense
$D\subseteq X$ there exists a point $x\in X$ such that there either exists an infinite sequence
$S\subseteq D$ such that $S\to x$ or a strict \vD-network $\D\subseteq2^D$ at $x$.
\end{lemma}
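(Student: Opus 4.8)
The plan is to unwind ``groomed'' into a purely combinatorial statement about the ideals $\I_x$, isolate the right combinatorial invariant, and then convert it into one of the two advertised configurations by an elementary point-set dichotomy. Recall that $X$ is groomed iff no dense $D\subseteq X$ is entangled, and that $D$ is entangled iff $\I_x\restriction D$ is $\omega$-hitting for every $x\in X$. Thus ``$X$ is groomed'' unfolds as: \emph{for every dense $D$ there is a point $x$ with $\I_x\restriction D$ not $\omega$-hitting}. So I would fix a dense $D$ and prove that ``$\exists x$ with $\I_x\restriction D$ not $\omega$-hitting'' is equivalent to ``$\exists x$ carrying either a convergent sequence from $D$ or a strict \vD-network contained in $2^D$''. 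I would keep the quantifier $\exists x$ on the outside throughout, because the one genuinely slippery point is that the witnessing point for the ideal and the witnessing point for alternatives (a)/(b) need not coincide.

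The pivot is the following equivalence, for fixed dense $D$ and fixed $x$: $\I_x\restriction D$ is \emph{not} $\omega$-hitting iff there is a countable family $\{Y_n\}$ of infinite subsets of $D$ such that every open $U\ni x$ almost contains some $Y_n$ (a \emph{strict network} at $x$). For the easy direction, if $\{Y_n\}$ is such a network and $I\subseteq D$ satisfies $x\notin\cl{I}$, choose $U\ni x$ with $I\cap U=\varnothing$ and some $Y_n\ain U$; then $I\cap Y_n$ is finite, so no $I\in\I_x\restriction D$ meets all $Y_n$ infinitely, and $\{Y_n\}$ witnesses failure of $\omega$-hitting. Conversely, let $\{Y_n\}$ witness failure. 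If some open $U\ni x$ had $Y_n\setminus U$ infinite for every $n$, then $I=(\bigcup_n Y_n)\setminus U$ would lie in $\I_x\restriction D$ and meet every $Y_n$ infinitely, a contradiction; hence for every $U\ni x$ some $Y_n\ain U$, i.e.\ $\{Y_n\}$ is a strict network.

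The engine for the conversion is an elementary dichotomy: every infinite $Y$ in a countable space contains either a sequence converging to some point of $X$, or an infinite closed discrete subspace. I would prove this directly, enumerating $X=\{v_k:k\in\omega\}$ and building a decreasing chain $Y=Y_0\supseteq Y_1\supseteq\cdots$ of infinite sets with $v_j\notin\cl{Y_{j+1}}$: at stage $k$, if $v_k\notin\cl{Y_k}$ keep $Y_{k+1}=Y_k$; otherwise, since $Y_k$ does not itself converge to $v_k$ (no convergent sequence in $Y$), pick open $U_k\ni v_k$ with $Y_k\setminus U_k$ infinite and set $Y_{k+1}=Y_k\setminus U_k$. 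A diagonal choice $y_k\in Y_{k+1}$ then has no limit point, since for each $j$ the tail $\{y_k:k\ge j\}\subseteq Y_{j+1}$ avoids a neighborhood of $v_j$; so $\{y_k\}$ is infinite closed discrete. Now, given a strict network $\{Y_n\}$ at a point $x_0$: if some $Y_n$ carries a convergent sequence $S\to y$, then $S\subseteq Y_n\subseteq D$ yields alternative (a) at the (possibly new) point $y$; otherwise every $Y_n$ contains an infinite closed discrete $D_n\subseteq Y_n\subseteq D$, and since $Y_n\ain U\Rightarrow D_n\ain U$ the family $\{D_n\}$ is a strict \vD-network at $x_0$, giving alternative (b). The reverse conversion is immediate (a convergent sequence is a one-element strict network, and a strict \vD-network is a strict network of infinite subsets of $D$), and chaining the three equivalences through the outer $\exists x$ proves the Lemma.

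The main obstacle is not any single estimate but getting the logical shape right. The decisive step is recognizing that the correct invariant is ``existence of a strict network'', after which the rest is bookkeeping plus the dichotomy above. I expect the real temptation to avoid is trying to prove the pointwise statement ``$\I_x\restriction D$ not $\omega$-hitting $\iff$ (a) or (b) at the \emph{same} $x$'', which is false: at the apex of the Arens space $S_2$ the dense set of top-level points carries a strict network yet admits neither a convergent sequence nor a strict \vD-network, the required convergent sequences living one Cantor--Bendixson level below. The dichotomy of the third paragraph is precisely the mechanism that relocates the witness from the network point to the limit of one of the $Y_n$, and carrying the quantifier over $x$ to the outside is what makes the whole equivalence go through.
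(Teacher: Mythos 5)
Your proof is correct. The paper itself omits the proof of Lemma~\ref{groomed} (saying only that it is elementary), so there is no official argument to compare against; your three-step scheme --- the pointwise equivalence of ``$\I_x\restriction D$ is not $\omega$-hitting'' with the existence at $x$ of a strict network of infinite subsets of $D$, the dichotomy that in a countable $T_1$ space every infinite set contains either a convergent sequence or an infinite closed discrete subset, and the resulting relocation of the witnessing point from the network point to a limit of some $Y_n$ --- is precisely the kind of elementary argument the statement calls for, and each step checks out (the dichotomy uses the paper's standing $T_1$ assumption together with countability, both available). Your closing remark that the pointwise version fails at the apex of $S_2$ is also accurate, and correctly identifies why the quantifier over $x$ must remain existential on both sides of the equivalence.
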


As indicated by Corollary~\ref{products} the classes of (countable) Fr\'echet
and sequential non Fr\'echet groups are both finitely productive
assuming \IIA\ holds. It appears the following question is open,
including the intriguing possibility of the negative answer in ZFC.
\begin{question}
Does there exist a (separable, or even countable) sequential non Fr\'echet group with a non
sequential square?
\end{question}

The answer to the next question is known to be independent of ZFC for
separable groups, while the non separable case in ZFC remains
open. 
\begin{question}
Does there exist a Fr\'echet group with a non Fr\'echet square?
\end{question}

Example~\ref{cw.not.kw} shows that the dichotomy
of Theorem~\ref{ctble-sequential} does not hold for general sequential
groups even in the separable case. However, the more general question below
appears to be open.
\begin{question}
Does there exist a separable sequential group that is neither $\cw$
nor metrizable?
\end{question}

Finally, while the class of $k_\omega$ spaces is finitely productive,
it is not clear if the same is true about sequential $c_\omega$ spaces. 
\begin{question}
Are sequential \cw-spaces preserved by finite products?
\end{question}

\section*{Acknowledgements}

The authors would like to thank the anonymous referees for their
careful reading of the paper and many helpful suggestions that
significantly improved the exposition. In particular, one of the
referees noted that our proof of the consistency of the
\IIA\ implicitly used a closedness-like assumption for the sets that
generate the ideal. This assumption was made explicit in the
definition of the weakly closed ideal.

The second author (AS) would also like to thank the first author and
Centro de Ciencias Matem\'aticas in Morelia, M\'exico for their
support and hospitality during his visit in October of 2019 which
served as a starting point for the research in this paper.

\end{document}